\newtheorem{claim}{Claim}
\newtheorem{defn}[claim]{Definition}
\newtheorem{lem}[claim]{Lemma}
\newtheorem{thm}[claim]{Theorem}
\newtheorem{cor}[claim]{Corollary}
\DeclareMathOperator{\spn}{span}
\DeclareMathOperator{\mat}{Mat}
\DeclareMathOperator{\codim}{codim}
\NewDocumentCommand \bern { O{k} }{
    \rho_{#1}
}
\NewDocumentCommand \calLp { O{n} O{\omega} }{
    {\mathcal L'}^{(#1)}_{#2}
}
\NewDocumentCommand \calL { O{n} O{\sigma^{-n}\omega} }{
    \mathcal L^{(#1)}_{#2}
}
\NewDocumentCommand \Lab { O{a} O{b} }{
    \mathcal L_{#1\rightarrow#2}
}
\newcommand{\ubar}[1]{\text{\b{$#1$}}}
\title{Constructing the Oseledets decomposition with subspace growth estimates}
\author{George Lee}
\begin{document}
\maketitle
\abstract{
The semi-invertible version of Oseledets' multiplicative ergodic theorem providing a decomposition of the underlying state space of a random linear dynamical system into fast and slow spaces is deduced for a strongly measurable cocycle on a separable Banach space.
This work represents a significantly simplified means of obtaining the result, using measurable growth estimates on subspaces for linear operators combined with a modified version of Kingman's subadditive ergodic theorem.
}
\section{Introduction}

The multiplicative ergodic theorem is a fundamental tool in the study of linear dynamical systems.
Given a Banach space $X$ write $\mathcal B(X)$ for the space of bounded linear operators on $X$.
Let $\Omega$ be a Lebesgue probability space.
Given an ergodic system $\sigma:\Omega\rightarrow\Omega$ and function $\mathcal L:\Omega\rightarrow\mathcal B(X)$, one may compose copies of $\mathcal L$ along orbits and investigate long term behaviour of any $x\in X$.
Such an $\mathcal L$ is referred to as a \textit{cocycle}, and is said to be \textit{forward-integrable} if $\log^+\Vert\mathcal L\Vert\in L^1(\Omega)$.
Iteratively set $\calL[0][\omega]=1_X:X\rightarrow X$, and $\calL[n][\omega]:=\mathcal L_{\sigma^{n-1}\omega}\circ\calL[n-1][\omega]$ for $n\in\mathbb N$.
The multiplicity property $\calL[m+n][\omega]=\calL[m][\sigma^n\omega]\circ\calL[n][\omega]$ is clear.
If $\mathcal L$ is required to be invertible then $\calL[n][\omega]=\mathcal L_{\sigma^n\omega}^{-1}\circ\cdots\circ\mathcal L_{\sigma^{-1}\omega}^{-1}$ for $n<0$ extends the cocycle to all $n\in\mathbb Z$.
If $\mathcal L^{-1}$ is forward-integrable then $\mathcal L$ is said to be \textit{backward-integrable}.
One may seek ways of describing $X$ in terms of long term behaviour of vectors under $\calL[n][\omega]$ as $n\rightarrow\infty$.
Given any normed space $V$ write $\mathbb S_V=\{x\in V:\Vert x\Vert=1\}$ and $\mathbb B_V=\{x\in V:\Vert x\Vert<1\}$.
Oseledets \cite{oseledets1968multiplicative} proved the following in 1965:
\begin{thm}
Let $\Omega$ be a Lebesgue probability space and $\sigma:\Omega\rightarrow\Omega$ be an invertible measure preserving transformation.
Let $\mathcal L:\Omega\rightarrow GL_d(\mathbb R)$ be forward and backward integrable, where $GL_d$ denotes the invertible $d-$dimensional matrices.
Then there are measurable numbers $\lambda_i(\omega),i\in\{1,\cdots,r_\omega\}$ and a direct sum decomposition of invariant measurable subspaces $\mathbb R^d=\bigoplus_iE_i(\omega)$ such that for $x\in\mathbb S_{E_i(\omega)}$,
$$
\lim_{n\rightarrow\pm\infty}\tfrac1n\log\Vert\calL[n][\omega]x\Vert=\pm\lambda_i(\omega).
$$
This limit converges uniformly in $x$.
\end{thm}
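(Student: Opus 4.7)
The plan is to follow the classical strategy: extract Lyapunov exponents via Kingman's subadditive ergodic theorem, construct forward and backward slow filtrations from singular value decompositions of $\calL[n][\omega]$, and intersect them transversally to produce the splitting $\bigoplus_i E_i(\omega)$. First, applying Kingman to $n\mapsto\log\Vert\wedge^k\calL[n][\omega]\Vert$ for $k=1,\dots,d$ gives almost sure limits $\mu_k(\omega)$, constant by ergodicity; forward integrability supplies the $L^1$ bound via $\Vert\wedge^k M\Vert\leq\Vert M\Vert^k$, and subadditivity follows from the cocycle identity. Setting $\lambda_k:=\mu_k-\mu_{k-1}$ (with $\mu_0=0$) and grouping equal values yields the distinct exponents $\lambda_1>\cdots>\lambda_r$ with multiplicities $d_i$, equivalently the asymptotic rates of the ordered singular values of $\calL[n][\omega]$.

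Next, write $\calL[n][\omega]=U_n\Sigma_nV_n^\top$ for the singular value decomposition and let $V^{(i)}_n(\omega)\subset\mathbb R^d$ denote the span of the right singular vectors for the smallest $d_i+\cdots+d_r$ singular values of $\calL[n][\omega]$. The key step is to show $\{V^{(i)}_n(\omega)\}_n$ is Cauchy in the Grassmannian whenever $i\geq 2$. The argument uses the Lyapunov gap $\lambda_{i-1}>\lambda_i$ together with a Wedin-type $\sin\Theta$ estimate: the orthogonal projection of any unit $v\in V^{(i)}_n(\omega)$ onto $(V^{(i)}_{n+m}(\omega))^\perp$ must be exponentially small in $n+m$, because otherwise the image under $\calL[n+m][\omega]$ would be forced to have norm at least $e^{(n+m)(\lambda_{i-1}-\varepsilon)}$, contradicting the upper bound $\Vert\calL[n+m][\omega]v\Vert\leq\Vert\mathcal L^{(m)}_{\sigma^n\omega}\Vert\cdot e^{n(\lambda_i+\varepsilon)}$ once $\Vert\mathcal L^{(m)}_{\sigma^n\omega}\Vert$ is controlled by temperedness. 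The limit $V^{(i)}(\omega)$ is the forward slow filtration of dimension $d_i+\cdots+d_r$.

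Backward integrability and invertibility let the same construction run for $(\sigma^{-1},\mathcal L^{-1})$, producing a filtration $V_-^{(i)}(\omega)$ of dimension $d_1+\cdots+d_i$ consisting of vectors of backward growth $\leq-\lambda_i$ (equivalently forward growth $\geq\lambda_i$). Setting $E_i(\omega):=V^{(i)}(\omega)\cap V_-^{(i)}(\omega)$, transversality $V^{(i+1)}(\omega)\cap V_-^{(i)}(\omega)=\{0\}$ follows because any nonzero vector in the intersection would simultaneously obey forward growth $\leq\lambda_{i+1}$ and $\geq\lambda_i$, contradicting the gap; a dimension count then delivers $\dim E_i=d_i$ and $\bigoplus_i E_i=\mathbb R^d$. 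Invariance under $\mathcal L_\omega$ descends from the cocycle relation applied to the singular bases, the two-sided rate identity for $x\in\mathbb S_{E_i(\omega)}$ is a consequence of the pinching from the two filtrations, and uniform convergence on the finite-dimensional compact sphere $\mathbb S_{E_i(\omega)}$ follows from the pointwise limit by a standard equicontinuity argument with subadditive control.

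The main obstacle is the Cauchy estimate for $V^{(i)}_n(\omega)$ in the second step: turning the asymptotic Lyapunov gap into a genuinely summable sequence of angle bounds demands uniform control of the error $\varepsilon$ on a set of full measure, which in turn requires upgrading Kingman's convergence to something quantitative on an $\omega$-dependent tail. Once this angular bound is secured the remaining pieces — transversality, splitting, invariance, and uniformity — reduce to essentially routine bookkeeping.
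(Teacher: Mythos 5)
The paper does not actually prove this statement: it is quoted as Oseledets' classical theorem and serves only as background, so there is no in-paper proof to compare against. Your outline is the standard Raghunathan--Ruelle route (Kingman on exterior powers, convergence of the right-singular subspaces, intersection of forward and backward filtrations), and the first two steps --- the exponents $\mu_k$ and the Cauchy estimate for $V^{(i)}_n(\omega)$ via the gap $\lambda_{i-1}>\lambda_i$ together with Birkhoff control of $\Vert\mathcal L_{\sigma^n\omega}\Vert$ --- are sound; you correctly identify that this is where the analytic work sits.

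The genuine gap is in the third step. You define $V^{(i)}_-(\omega)$ by the condition ``backward growth $\leq-\lambda_i$'' and then assert this is ``equivalently forward growth $\geq\lambda_i$,'' using that equivalence to get transversality $V^{(i+1)}(\omega)\cap V^{(i)}_-(\omega)=\{0\}$. That equivalence is not free --- it is essentially the content of the two-sided theorem. For an arbitrary $v$ with $\Vert\mathcal L^{(-n)}_\omega v\Vert\leq e^{n(-\lambda_i+\epsilon)}$, the identity $\Vert v\Vert=\Vert\mathcal L^{(n)}_{\sigma^{-n}\omega}\mathcal L^{(-n)}_\omega v\Vert$ only gives $1\leq e^{n(\lambda_1+\epsilon)}e^{n(-\lambda_i+\epsilon)}$, which is no contradiction once $\lambda_1>\lambda_i$. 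To rule out a nonzero $v$ in the intersection you must first establish equivariance of the limit filtration, so that $\mathcal L^{(-n)}_\omega v\in V^{(i+1)}(\sigma^{-n}\omega)$, and then bound $\Vert\mathcal L^{(n)}_{\sigma^{-n}\omega}\vert_{V^{(i+1)}(\sigma^{-n}\omega)}\Vert$ by $e^{n(\lambda_{i+1}+\epsilon)}$. The latter concerns the subadditive quantity $f_n=\log\Vert\mathcal L^{(n)}_\omega\vert_{V^{(i+1)}(\omega)}\Vert$ evaluated at $\sigma^{-n}\omega$, i.e.\ over the time interval $[-n,0]$ rather than $[0,n]$, and Kingman as you invoke it controls only the latter. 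This is precisely the issue the present paper isolates and resolves with its corollary on $f_n\circ\sigma^{-n}$ and the ``balanced'' Kingman theorem over $[-n,n]$; the same ingredient is needed again to prove $\lambda^+(x)\geq\lambda_i$ on $E_i$ and hence the two-sided uniform rates. Without it, transversality, the dimension count, and the pinching all remain unproved. Two smaller points: the singular subspaces are not unique when singular values coincide, so the measurable selection of $V^{(i)}_n$ deserves a word; and ``equicontinuity'' is not the right mechanism for uniformity on $\mathbb S_{E_i(\omega)}$ --- what one actually shows is that $g(\mathcal L^{(n)}_\omega,E_i(\omega))$ and $\Vert\mathcal L^{(n)}_\omega\vert_{E_i(\omega)}\Vert$ have the same exponential rate.
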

In a setting where all invertibility assumptions are dropped the conclusions are weaker.
The following flag decomposition is proven in the work of Raghunathan \cite{raghunathan1979proof}:
\begin{thm}
Let $\Omega$ be a Lebesgue probability space and $\sigma:\Omega\rightarrow\Omega$ be a (not necessarily invertible) measure preserving transformation.
Let $\mathcal L:\Omega\rightarrow\mat_d(\mathbb R)$ satisfy $\log^+\Vert\mathcal L\Vert\in L^1(\Omega)$ where $\mat_d$ denotes the $d$-dimensional matrices.
Then there are measurable numbers $\lambda_i(\omega),i\in\{1,\cdots,r_\omega\}$ and a flag of invariant measurable subspaces $\mathbb R^d=V_1(\omega)>V_2(\omega)>\cdots>V_{r_\omega}(\omega)$ such that for $x\in V_i(\omega)\setminus V_{i+1}(\omega)$,
$$
\lim_{n\rightarrow\infty}\tfrac1n\log\Vert\calL[n][\omega]x\Vert=\lambda_i(\omega).
$$
\end{thm}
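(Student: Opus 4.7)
My approach is to extract the Lyapunov spectrum from the growth of exterior powers and then define the filtration as sublevel sets of the pointwise growth rate. For each $1\le k\le d$ the exterior power cocycle satisfies $\|\Lambda^k\calL[n][\omega]\|\le\|\calL[n][\omega]\|^k$, so $\log^+\|\Lambda^k\mathcal L_\omega\|\in L^1(\Omega)$. Applying Kingman's subadditive ergodic theorem to $n\mapsto \log\|\Lambda^k\calL[n][\omega]\|$ produces $\sigma$-invariant measurable limits $\Lambda_k(\omega)\in[-\infty,\infty)$. Setting $\mu_k(\omega):=\Lambda_k(\omega)-\Lambda_{k-1}(\omega)$ with $\Lambda_0:=0$ yields Lyapunov exponents $\mu_1\ge\mu_2\ge\cdots\ge\mu_d$ (the monotonicity coming from log-concavity of products of singular values). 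Collecting the distinct values gives $\lambda_1(\omega)>\cdots>\lambda_{r_\omega}(\omega)$ with multiplicities $m_i\ge 1$.

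Next I would set
\[
V_i(\omega) := \Bigl\{x\in\mathbb R^d : \limsup_{n\to\infty}\tfrac1n\log\|\calL[n][\omega]x\| \le \lambda_i(\omega)\Bigr\},
\]
so that $V_1=\mathbb R^d$ and $V_1\supseteq V_2\supseteq\cdots\supseteq V_{r_\omega}$. Elementary estimates using $\|\calL[n][\omega](x+y)\|\le\|\calL[n][\omega]x\|+\|\calL[n][\omega]y\|$ show each $V_i$ is a linear subspace. The cocycle identity $\calL[n+1][\omega]=\calL[n][\sigma\omega]\circ\mathcal L_\omega$ gives equivariance $\mathcal L_\omega V_i(\omega)\subseteq V_i(\sigma\omega)$, since shifting the index by one does not affect limits normalised by $n$. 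Measurability of $\omega\mapsto V_i(\omega)$ follows because the defining conditions are countable conjunctions of measurable inequalities on the entries of $\calL[n][\omega]$.

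The main obstacle is to establish three interlocking facts: the limsup is attained as a genuine limit on each layer, its value is exactly $\lambda_i$, and $\codim V_{i+1}(\omega) = m_1(\omega)+\cdots+m_i(\omega)$. The upper bound on the growth is built into the definition of $V_i$, so what matters is the matching lower bound together with the dimension count. My plan is to proceed inductively, peeling off one block $\lambda_i$ at a time. For the top block, since $\Lambda^{m_1}\calL[n][\omega]$ achieves growth rate $m_1\lambda_1$, a measurable selection of near-maximising unit $m_1$-vectors followed by a compactness/Grassmannian extraction should produce a measurable $m_1$-dimensional subspace $W_1(\omega)$ on which $\tfrac1n\log\|\calL[n][\omega]x\|\to\lambda_1(\omega)$, with $W_1$ complementary to $V_2$. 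One then restricts to the quotient cocycle on $\mathbb R^d/V_2(\omega)$ and iterates. The hardest step is this measurable selection: one must rule out that near-maximising subspaces become too oblique to $W_1(\omega)$ along the orbit, and in the non-invertible setting this cannot be done by pullback. Instead one propagates the estimate forward, tracking the most-expanded subspaces of $\calL[n][\omega]^*\calL[n][\omega]$ as $n\to\infty$ and showing that their angle to the putative $W_1(\omega)$ does not degenerate; this is exactly the "measurable growth estimates on subspaces" step alluded to in the abstract.
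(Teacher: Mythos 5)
First, note that the paper does not actually prove this statement: it is quoted as background and attributed to Raghunathan, so there is no internal proof to compare against. Judged on its own terms, your outline follows the classical route --- exterior powers plus Kingman for the spectrum, a sublevel-set filtration, and an analysis of the most-expanded singular subspaces for the matching lower bound --- which is essentially Raghunathan's strategy and is sound in architecture.

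The genuine gap is that the one step carrying all the content of the theorem is announced rather than argued. Defining $V_i(\omega)$ by a $\limsup$ condition makes the upper bound tautological; everything reduces to showing that for $x\in V_i(\omega)\setminus V_{i+1}(\omega)$ the $\liminf$ of $\tfrac1n\log\Vert\calL[n][\omega]x\Vert$ is at least $\lambda_i(\omega)$, that the $\limsup$ takes no values strictly between consecutive $\lambda_i$, and that $\codim V_{i+1}=m_1+\cdots+m_i$. A ``compactness/Grassmannian extraction'' of near-maximising subspaces only yields a subsequential limit $W_1(\omega)$, and growth at rate $\lambda_1$ along a subsequence of times does not upgrade to a genuine limit in the non-invertible setting: the integrability of $\log^+\Vert\mathcal L\Vert$ controls how much $\Vert\calL[n][\omega]x\Vert$ can increase in one step, but there is no lower bound whatsoever on a single step, so the norm can collapse between the selected times. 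What is needed is a quantitative contraction estimate --- the analogue of Lemma \ref{pushclose} in this paper, or Raghunathan's summability lemma exploiting $\log^+\Vert\mathcal L\Vert\in L^1$ together with the gap $\lambda_1>\lambda_2$ --- showing that the near-maximising subspaces form a Cauchy sequence at an exponential rate, which then yields the lower bound at every time rather than along a subsequence. Until that estimate is written down the proof is incomplete. Two smaller points: the induction should continue on the restriction of the cocycle to the equivariant subspace $V_2(\omega)$, not on the quotient $\mathbb R^d/V_2(\omega)$, since the quotient retains only the top block you have already handled; and you should allow $\lambda_{r_\omega}=-\infty$, as the matrices may have nontrivial kernels.
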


These results were subsequently extended to more general settings: by Ruelle to compact operators on Hilbert spaces\cite{ruelle1979ergodic}, by Ma\~n\'e to compact operators on Banach spaces with additional continuity assumptions \cite{mane1983lyapounov} and by Thieullen to quasicompact operators \cite{thieullen1987fibres}.

The case where the underlying vector space is a separable Banach space was first presented by Lian and Lu \cite{lian2010lyapunov}.
Their monograph obtains the decomposition result assuming almost-everywhere injectivity of the cocycle in a separable Banach space.

The injectivity condition isn't necessary to obtain a decomposition: Froyland, Lloyd and Quas \cite{froyland2010semi} demonstrated that as long as $\sigma$ is invertible the space may still be written as a sum of fast and slow spaces.
That paper dealt with the finite dimensional case, but in \cite{gonzalez2011semi} Gonz\'alez-Tokman and Quas presented a generalisation to a separable Banach space.
However, there is some ambiguity to the statement and consequentially the proof - it is claimed that a measurable composition of $X$ exists without a precise discussion of what this means, which is crucial given that in the infinite dimensional setting no statement may be made of the slow spaces, as discussed in Horan's thesis \cite{horan2020spectral}.
Gonz\'alez-Tokman and Quas' subsequent shorter proof in the case where the dual $X^*=\mathcal B(X,\mathbb R)$ is separable used intuitive notions of volume growth that this work partially builds upon, but relied on less than fully rigorous references to cocycles whose domain varies depending on $\omega$.
Another example of this kind of emphasis on volume growth may be seen in the noninvertible result of Blumenthal in \cite{blumenthal2016vol} where a flag is obtained with no separability assumptions but with a much stronger uniform measurability condition for the cocycle.

Given measurable spaces $A$ and $B$ write $\mathcal M(A\rightarrow B)$ for the space of measurable functions from $A$ to $B$.
Let $\mathcal F$ be the Borel sigma algebra induced by the strong operator topology on $X$.
The space $\mathcal{SM}$ of strongly measurable functions consists of measurable maps with respect to this choice of sigma algebra:
$$
\mathcal{SM}\big(A\rightarrow\mathcal B(X))=\mathcal M(A\rightarrow(\mathcal B(X),\mathcal F)).
$$
Write $\mathcal G_kX=\{V\leq X:\dim V=k\}$ for the \textit{Grassmannian} of subspaces of dimension $k$.
Given $T\in\mathcal B(X)$ define the slowest growth of vectors in a given subspace under $T$:
$$
g(T,V)=g_T(V)=\inf_{x\in\mathbb S_V}\Vert Tx\Vert,\bern T=\sup_{V\in\mathcal G_kX}g_T(V).
$$
The $\rho_k$ are called Bernstein numbers in the work of Pietsch\cite{pietsch1974}, which gives an overview of similar kinds of statistics in Banach spaces.
Given a strongly measurable forward-integrable cocycle $\mathcal L$ ona Lebesgue probability space there are decreasing sequences $(\mu_i)_{i\in\mathbb N}$ and $\lambda_i$ of invariant functions
$$
\mu_k=\lim_{n\rightarrow\infty}\tfrac1n\rho_k\calL[n][\omega],\lambda_1=\mu_1,\lambda_{i+1}=\mu_{\inf\{t:\mu_t<\lambda_i\}}.
$$
While there are countably many $\mu_i$ there may only be finitely many $\lambda_i$s, referred to henceforth as the Lyapunov exponents.
The main result, an extension of the semi-inveritble result to separable Banach spaces, may now be stated:
\begin{thm}\label{mainres}
Let $(\Omega,\sigma,X,\mathcal L)$ be a strongly measurable forward-integrable random linear dynamical system with $(\Omega,\sigma)$ an ergodic invertible map on a Lebesgue probability space and having Lyapunov exponents $(\mu_i)_{i=1}^\infty$ and $(\lambda_i)_{i=1}^L$, where $1\leq L\leq\infty$.
Then for each $0\leq l<L$ there is a direct sum decomposition into  equivariant spaces $X=\big(\bigoplus_{i\leq l}E_i(\omega)\big)\oplus V_{l+1}(\omega)$ with the $E_l:\Omega\rightarrow\mathcal G_{m_l}X$ measurable, $m_l\in\mathbb N$ and having
$$
\tfrac1n\log\Vert\calL[n][\omega]\vert_{E_i(\omega)}\Vert,\tfrac1n\log\inf\{\Vert\calL[n][\omega]x\Vert:x\in\mathbb S_{E_i(\omega)}\}\rightarrow\lambda_i,
$$
and
$$
V_l(\omega)=\{x\in X:\lim_{n\rightarrow\infty}\tfrac1n\log\Vert\calL[n][\omega]x\Vert\leq\lambda_l\}.
$$
The projection $\Pi:X\rightarrow\bigoplus_{j<i}E_{j}$ parallel to $V_i$ is strongly measurable and \textit{tempered}, that is to say, $\lim_{n\rightarrow\infty}\tfrac1n\log\Vert\Pi_{\sigma^n\omega}\Vert=0$ almost surely.
There is a nontrivial decomposition, $L\geq2$, exactly when $\nu=\lim_{n\rightarrow\infty}\mu_n<\lambda_1$.
\end{thm}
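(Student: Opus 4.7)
The plan is to combine a subadditive analysis of the Bernstein numbers $\bern$ with an equivariant pullback construction from $\sigma^{-n}\omega$, exploiting invertibility of $\sigma$. The argument divides into three stages: establish the $\mu_k$ as deterministic limits via subadditivity, construct the top fast space $E_1$ together with a slow complement $V_2$, and upgrade the result to a tempered direct sum decomposition, after which one iterates.

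For the first stage, the Bernstein numbers satisfy submultiplicative inequalities of the form $\bern(ST)\leq\bern(S)\Vert T\Vert$ and $\bern(ST)\leq\Vert S\Vert\bern(T)$, making $n\mapsto\log\bern\calL[n][\omega]$ subadditive along $\sigma$-orbits. The modified Kingman subadditive ergodic theorem alluded to in the abstract then gives a.s.\ limits $\mu_k=\lim_n\tfrac1n\log\bern\calL[n][\omega]$, and as a byproduct yields temperedness of the single-step quantities $\log\bern\mathcal L_\omega$ along orbits, a fact reused at the end. The Lyapunov values $\lambda_i$ and multiplicities $m_i=\#\{k:\mu_k=\lambda_i\}$ are read off directly from the $\mu_k$.

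For the construction of $E_1$, fix $\epsilon>0$ smaller than $\lambda_1-\lambda_2$ and, for each $n$, pick $W_n\in\mathcal G_{m_1}X$ at $\sigma^{-n}\omega$ with $g(\calL[n][\sigma^{-n}\omega],W_n)\geq e^{n(\lambda_1-\epsilon)}$, realising $\bern[m_1]$ up to $\epsilon$-error. The pushforward $U_n(\omega):=\calL[n][\sigma^{-n}\omega](W_n)$ lies in $\mathcal G_{m_1}X$ (the positive $g$-bound preserves dimension). Take $E_1(\omega)$ to be a cluster point of $(U_n(\omega))$ in a suitable measurable topology on $\mathcal G_{m_1}X$, for instance the gap metric. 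Equivariance is immediate from the construction, and the growth rate on $E_1(\omega)$ is pinned to $\lambda_1$: the lower bound $\lambda_1-\epsilon$ is inherited from the $g$-bound along the defining subsequence via $\calL[n_j+k][\sigma^{-n_j}\omega]=\calL[k][\omega]\circ\calL[n_j][\sigma^{-n_j}\omega]$, while the upper bound $\lambda_1$ comes from $\tfrac1n\log\bern[m_1]\calL[n][\omega]\to\lambda_1$; diagonalising over $\epsilon\to 0$ yields equality. Complementarily, $V_2(\omega):=\{x\in X:\limsup_n\tfrac1n\log\Vert\calL[n][\omega]x\Vert\leq\lambda_2\}$ is closed and $\sigma$-equivariant by definition.

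For the splitting $X=E_1(\omega)\oplus V_2(\omega)$, an arbitrary $x$ is decomposed inside each $U_n(\omega)\oplus\calL[n][\sigma^{-n}\omega](W_n')$, where $W_n'$ is an approximately optimal complement to $W_n$ at $\sigma^{-n}\omega$ on which the cocycle grows at rate at most $\lambda_2$; along the $E_1$-defining subsequence, the fast parts converge into $E_1(\omega)$ and the residual into $V_2(\omega)$. Strong measurability of the projection follows from that of $E_1$ as a measurable lim-sup in the Grassmannian. Temperedness of $\Vert\Pi_\omega\Vert$ amounts to subexponential angle control between $E_1(\sigma^n\omega)$ and $V_2(\sigma^n\omega)$, extractable from a ratio of tempered operator-norm and Bernstein data from the first stage together with the gap $\lambda_1-\lambda_2>0$. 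The full flag then comes from iterating the construction on the cocycle restricted to $V_{l+1}$. I anticipate the principal obstacle to be this last step: the pullback only delivers $E_1$ as a cluster point, so converting Bernstein estimates into an \emph{angle} bound, rather than a mere subspace distance, requires careful control of how much the $U_n(\omega)$ rotate before stabilising, so that non-tempered rotation is absorbed by the gap; closedness of $E_1+V_2$ in the Banach topology must in fact be recovered from these same uniform angle bounds.
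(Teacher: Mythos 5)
Your overall architecture --- pull fast subspaces back from $\sigma^{-n}\omega$, push them forward, pass to a limit in the Grassmannian, and iterate --- is the same as the paper's, but two steps as written do not go through. First, you choose $W_n$ to realise $\rho_{m_1}(\mathcal L^{(n)}_{\sigma^{-n}\omega})$, i.e.\ to be fast only over the time interval $[-n,0]$. This does give closeness of successive pushforwards via the Grassmannian contraction estimate, but it gives no lower bound on the \emph{forward} growth of $U_n(\omega)$: a subspace that expanded maximally from time $-n$ to $0$ may be carried into a slowly growing region after time $0$, and your claimed inheritance of the bound through $\mathcal L^{(n+k)}_{\sigma^{-n}\omega}=\mathcal L^{(k)}_\omega\circ\mathcal L^{(n)}_{\sigma^{-n}\omega}$ requires a lower bound on $g(\mathcal L^{(n+k)}_{\sigma^{-n}\omega},W_n)$ that you never established. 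The paper fixes this by optimising over the \emph{balanced} interval $[-n,n]$, choosing $\tilde E^{(n)}$ with $g(\mathcal L^{(2n)}_{\sigma^{-n}\omega},\tilde E^{(n)})\geq e^{-\epsilon}\rho_{m_1}(\mathcal L^{(2n)}_{\sigma^{-n}\omega})$, so that the pushforward is simultaneously (i) the image of a backward-fast space, hence Cauchy, and (ii) forward-fast from time $0$ to $n$, so the limit genuinely grows at rate $\lambda_1$. Making (ii) quantitative needs $\tfrac1{2n}\log\rho_{m_1}(\mathcal L^{(2n)}_{\sigma^{-n}\omega})\rightarrow\lambda_1$, i.e.\ Kingman's theorem along intervals $[-n,n]$ rather than $[0,n]$ --- a genuinely new statement (Theorem \ref{balancedkingman}) whose Vitali-covering proof is a substantial part of the paper and is absent from your plan.

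Second, taking $E_1(\omega)$ to be ``a cluster point'' is not available: $\mathcal G_{m_1}X$ is complete but not compact when $X$ is infinite dimensional, so a bounded sequence need have no cluster point at all, and even if one existed, a choice of cluster point is not obviously measurable. You must prove that the full sequence $U_n(\omega)$ is Cauchy (which the contraction lemma delivers once the fastness bounds are in place) and select the approximating subspaces measurably from a countable dense subset of $\mathcal G_{m_1}X$. Relatedly, your splitting invokes ``an approximately optimal complement $W_n'$ on which the cocycle grows at rate at most $\lambda_2$''; the existence of such a codimension-$m_1$ complement with controlled projection norm is essentially the conclusion being proved, not a known input. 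The paper instead builds $\Pi_\omega$ directly as a limit of measurably selected translates of each $x$ by elements of $E(\omega)$ whose forward growth is at most $e^{n(\lambda_2+\epsilon)}$, proves linearity, boundedness and idempotency of the limit, and obtains temperedness by comparing with the auxiliary cocycle $\mathcal L\circ\Pi$ on a positive-measure set; the iteration is then run on the full-space cocycle $\mathcal L\circ\Pi_l$ (using the two-sided Bernstein comparison of Lemmas \ref{Tsmallgrow} and \ref{Tbiggrow}) precisely to avoid the $\omega$-dependent-domain issues that your ``restrict to $V_{l+1}$'' step would reintroduce.
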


The choice of construction method for the Lyapunov exponents is a fundamental aspect to each proof: in \cite{raghunathan1979proof} $\mu_i$ are written simultaneously using finite dimensional singular value decomposition, while by contrast in \cite{lian2010lyapunov} the first Lyapunov exponent is defined according to the asymptotic growth rate of $\Vert\calL[n][\omega]\Vert$ followed by the Lyapunov exponents being first described as the asymptotic growth rate for nonzero elements of the fast spaces in the statement of the thorem.
While singular value decomposition is no longer available in these contexts, some construction must be written down that may be viewed as reminiscent of a proof of the singular value decomposition of a transformation.
The work presented here explicitly relies on a notion of singular values for arbitrary elements of $\mathcal B(X)$.
Given a $T\in\mathcal B(X)$, sufficient conditions on these singular values for contracting fast growing regions of $\mathcal GX$ under the action $V\mapsto TV$ are derived.
A modified version of Kingman's subadditive theorem is established in reminiscent used to guarantee the asymptotic growth rates of the singular values of $\calL[n][\omega]$.

The results from past papers stated thus far have not required ergodicity.
Throughout the rest of this paper, ergodicity will be assumed, simplifying the classification of invariant functions, although all results as is typical may be formulated with this assumption dropped.
In this Banach space setting, the main theorem yields in a trichotomous classification of forward-integrable cocycles on separable Banach spaces: one of
\begin{itemize}
\item $\mathcal L$ fails to be quasicompact, with $\mu_i=\nu$ for all $i\in\mathbb N$ - no fast spaces may be detected
\item $\mathcal L$ is quasicompact, with finitely many finite dimensional fast spaces growing at rates $\lambda_1>\cdots>\lambda_L>\lambda_{L+1}=\nu$
\item $\mathcal L$ is quasicompact, with a countable sequence of finite dimensional fast spaces growing at rates $\lambda_1>\lambda_2>\cdots\rightarrow\nu$.
\end{itemize}
These three possibilities just correspond to situations where there are no, finitely many or countably many fast spaces.
The number $\nu$ is an alternative choice to the typical index of compactness $\kappa=\lim_{n\rightarrow\infty}\tfrac1n\Vert\calL[n][\omega]\Vert_c$ that proves simpler to work with in this proof.
In the appendix it is verified that these quantities are equal, so that the final result represents an extension of the result of Lian and Lu with the injectivity assumption dropped, avoiding measurability or domain concerns present in \cite{gonzalez2011semi} and \cite{gonzalez2015concise}.
In particular, the advantages of the current work are in its brevity, the preciseness of the conclusion and the use of an intuitive geometric perspective.

The author is indebted to Anthony Quas for his enthusiastic encouragement and guidance throughout.
\section{The Grassmannian}
When discussing subspaces of $X$, we may also consider the space of subspaces of $X$ they lie in:
\begin{defn}
Given vector spaces $U,V\leq X$ with $U\oplus V=X$, write $\Pi_{U\vert V}$ for the projection defined by $\Pi_{U\Vert V}(u+v)=u$ for any $u\in U$ and $v\in V$.
The Grassmannian is defined as the set
\begin{align*}
\mathcal GX=&\{V\text{ closed }\leq X:\text{ there exists a projection }X\xrightarrow\Pi V\text{ with }\Vert\Pi\Vert<\infty\}\\
=&\{V\text{ closed }\leq X:\text{ there exists }W\leq X\text{ with }\Vert\Pi_{V\Vert W}\Vert<\infty\}.
\end{align*}
$\mathcal GX$ may be metrised by any of a few equivalent choices of distance between spaces, such as the Hausdorff distance between unit spheres.
$\mathcal G_kX\subset\mathcal GX$ has already been defined.
Write $\mathcal G^kX=\{V\leq X:\codim V=k\}\subseteq\mathcal GX$.
\end{defn}
\begin{lem}\label{GXsep}
Suppose that $X$ is a Banach space.
Then
\begin{itemize}
\item
If $X$ is separable then $\mathcal G_kX$ is separable and complete.
\item
If $\Omega$ is a measurable space, $E\in\mathcal M(\Omega\rightarrow\mathcal G_kX)$ and $A\in\mathcal{SM}(\Omega\rightarrow\mathcal B(X))$ then the pointwise pushforward $\omega\mapsto A(\omega)E(\omega)$ is measurable.
\end{itemize}
\end{lem}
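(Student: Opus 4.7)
For the first bullet, the plan is to obtain separability and completeness directly via the Hausdorff metric between unit spheres. Pick a countable dense $D\subseteq X$ and form the countable family of spans of linearly independent $k$-tuples in $D^k$; given any $V\in\mathcal G_kX$ with chosen basis $(e_i)$ and $\varepsilon>0$, approximate each $e_i$ by some $d_i\in D$. Perturbing a basis by $\varepsilon$ (normalised against a lower bound on the minimum ``basis singular value'' $\inf_{\|\lambda\|_\infty=1}\|\sum_i\lambda_ie_i\|$) perturbs the unit sphere by $O(\varepsilon)$ in Hausdorff distance, so the span of the $d_i$ lies within $O(\varepsilon)$ of $V$. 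For completeness, take a Cauchy sequence $(V_n)$, pick bases $(e_i^{(n)})$ of each $V_n$ with uniformly bounded basis constants (Auerbach bases suffice), and use the Hausdorff-closeness to make the $(e_i^{(n)})_n$ Cauchy in $X^k$; the span of the componentwise limits is closed, $k$-dimensional and the required limit.

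For the second bullet, the strategy is to reduce to measurability of vector-valued functions by picking a measurable basis. First, using measurability of $E$ together with the separability just established, obtain measurable $v_1,\ldots,v_k:\Omega\rightarrow X$ forming a pointwise basis of $E(\omega)$; this is a measurable-selection argument, executable by hand on a countable dense $(d_n)\subseteq X$ using that $\omega\mapsto d(d_n,E(\omega))$ is measurable (since $V\mapsto d(d_n,V)$ is continuous on $\mathcal G_kX$). Second, strong measurability of $A$ together with measurability of each $v_i$ implies $\omega\mapsto A(\omega)v_i(\omega)$ is measurable: approximate $v_i$ by $X$-valued simple functions $\sum_jx_j\mathbf 1_{\Omega_j}$ using separability of $X$, and apply $A(\omega)$ termwise, invoking strong measurability at each fixed $x_j$. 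Finally, $A(\omega)E(\omega)$ is the span of the measurable vector fields $A(\omega)v_i(\omega)$, and one checks directly that the span of finitely many measurable vector fields is measurable into $\mathcal GX$ via a continuity estimate for the Hausdorff distance of finite-dimensional spans in terms of the distances between their generators.

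The main obstacle I anticipate is a potential dimension drop: if $A(\omega)$ fails to be injective on $E(\omega)$, the image lies in some $\mathcal G_jX$ with $j<k$, and the last span-measurability step is cleanest when the dimension is constant. The plan is to partition $\Omega$ into the measurable sets $\Omega_j=\{\omega:\dim A(\omega)E(\omega)=j\}$, using that the quantity $\inf_{\|\lambda\|_\infty=1}\|\sum_i\lambda_iA(\omega)v_i(\omega)\|$ and its analogues for subfamilies are continuous in the generators and hence measurable in $\omega$; then on each $\Omega_j$ measurably extract a maximal linearly independent subfamily of the $A(\omega)v_i(\omega)$'s by inspecting finitely many such infima, and apply the full-dimensional span-measurability argument on each piece before reassembling.
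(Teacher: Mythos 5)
Your proposal is correct in outline, but it takes a genuinely different route from the paper: the paper does not prove this lemma at all, instead citing Kato (Ch.\ IV, \S2.1) for completeness of $\mathcal G_kX$ and Corollary B.13 of Gonz\'alez-Tokman--Quas for measurability of the pushforward, whereas you supply a self-contained argument. Your first-bullet argument (spans of $k$-tuples from a countable dense set, with perturbation of the unit sphere controlled by the quantity $\inf_{\|\lambda\|_\infty=1}\|\sum_i\lambda_ie_i\|$, and completeness via Auerbach bases) is the standard proof; the one place needing care is that you cannot pick the Auerbach bases of the $V_n$ independently and expect them to be Cauchy --- they must be chosen coherently, e.g.\ by inductively approximating the previous basis inside $V_{n+1}$ using the Hausdorff closeness and checking the approximants retain a uniform lower bound $\|\sum\lambda_ie_i^{(n)}\|\geq c\max_i|\lambda_i|$ so that no dimension collapse occurs in the limit. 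Your second-bullet argument essentially reconstructs the paper's own later Lemma~\ref{measbase} (a measurable basis map $b:\mathcal G_kX\rightarrow\mathbb S_X^k$, built exactly by the greedy selection over a countable dense subset that you describe), composes it with $E$, and then uses the standard fact that strong measurability of $A$ plus separability of $X$ makes $\omega\mapsto A(\omega)v_i(\omega)$ measurable via simple-function approximation; note this step silently uses separability of $X$, which the second bullet does not formally hypothesise but which is a standing assumption of the paper. Your handling of the possible dimension drop by partitioning $\Omega$ according to $\dim A(\omega)E(\omega)$ and extracting a maximal independent subfamily is a point the paper's citation glosses over entirely, and is worth keeping. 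What your approach buys is a paper that is closer to self-contained; what the citation buys is brevity and the avoidance of precisely the bookkeeping (coherent bases, rank stratification) that your sketch still leaves to be written out.
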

\begin{proof}
Completeness is established in section 2.1 of chapter IV of \cite{kato2013perturbation}.
The pushforward result is established in corollary B.13 of \cite{gonzalez2011semi}.
\end{proof}
\begin{lem}
If we take $B=\mathcal B(X)$ then we have the following characterisation of the space of strongly measurable functions:
$$
\mathcal{SM}\big(\Omega\rightarrow\mathcal B(X)\big)=\left\{\mathcal L\colon\Omega\rightarrow\mathcal B(X):\begin{array}{c}\text{for each }x\in X,\\(\omega\mapsto\mathcal L_\omega x)\in\mathcal M\big(\Omega\rightarrow X\big)\end{array}\right\}.
$$
\end{lem}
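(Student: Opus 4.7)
My plan is to prove the two set inclusions separately. The forward direction is immediate: for each $x\in X$ the evaluation map $\mathrm{ev}_x\colon(\mathcal B(X),\mathrm{SOT})\to X$, $T\mapsto Tx$, is continuous by the very definition of the strong operator topology, hence Borel as a map $(\mathcal B(X),\mathcal F)\to X$. Composing with any $\mathcal F$-measurable $\mathcal L$ yields measurability of every $\omega\mapsto\mathcal L_\omega x$.

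For the reverse direction, let $\mathcal A$ denote the sigma algebra on $\mathcal B(X)$ generated by the family $\{\mathrm{ev}_x:x\in X\}$, so that the hypothesis is precisely that $\mathcal L$ is $\mathcal A$-measurable; it suffices to prove $\mathcal A=\mathcal F$. The inclusion $\mathcal A\subseteq\mathcal F$ is the continuity observation above. For $\mathcal F\subseteq\mathcal A$ I exploit the separability of $X$ assumed throughout the paper: fix a countable dense $\{x_n\}\subset X\setminus\{0\}$. Then $\|T\|=\sup_n\|Tx_n\|/\|x_n\|$ expresses the operator norm as a countable supremum of $\mathcal A$-measurable maps, so the norm is $\mathcal A$-measurable and each ball $B_k:=\{T\in\mathcal B(X):\|T\|\le k\}$ lies in $\mathcal A$.

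The technical heart of the argument is to verify that the SOT on each $B_k$ is metrisable, specifically by $d_k(S,T):=\sum_n 2^{-n}\min(1,\|Sx_n-Tx_n\|)$. That $d_k$-convergence implies SOT-convergence is where the uniform bound $\|T\|\le k$ is used: for arbitrary $x\in X$ and $\varepsilon>0$ one approximates $x$ by a suitable $x_n$ and splits $\|T_mx-Tx\|$ into an evaluation difference at $x_n$, which vanishes by assumption, together with two terms controlled by the uniform operator bound and density. The metric space $(B_k,d_k)$ is then separable, hence its Borel sigma algebra is generated by countably many open $d_k$-balls, each defined entirely in terms of evaluations at the $x_n$, and so contained in $\mathcal A\cap B_k$. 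This yields $\mathcal F\cap B_k=\mathcal A\cap B_k$, and since $\mathcal B(X)=\bigcup_k B_k$, writing a general $\mathcal F$-set as $\bigcup_k(E\cap B_k)$ with each piece in $\mathcal A$ completes the proof.

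The main obstacle I anticipate is that SOT on all of $\mathcal B(X)$ is in general neither second countable nor metrisable when $X$ is infinite dimensional, so one cannot directly pick out a countable subbase for $\mathcal F$. The workaround is to first promote the norm-boundedness filtration $\{B_k\}$ into $\mathcal A$ using the countable dense $\{x_n\}$, and only afterwards pass to each $B_k$ where SOT is well-behaved.
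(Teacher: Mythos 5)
Your argument is correct and complete. Note that the paper does not actually prove this lemma; it simply cites Lemma A.4 of the Gonz\'alez-Tokman--Quas paper, which establishes the same equivalence by essentially the route you take (the standard one: on norm-bounded subsets of $\mathcal B(X)$ with $X$ separable, the strong operator topology is separable and metrizable via evaluations at a countable dense subset of $X$). So you have supplied a self-contained proof where the paper defers to a reference. Two small points worth making explicit if you write this up. First, the separability of $(B_k,d_k)$, which you assert without justification, follows from the fact that $T\mapsto(Tx_n)_{n\in\mathbb N}$ embeds $(B_k,d_k)$ homeomorphically into the countable product $X^{\mathbb N}$ with the product topology, which is separable metrizable, and subspaces of separable metric spaces are separable. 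Second, to conclude that every relatively SOT-open subset of $B_k$ lies in $\mathcal A$ you only need the one direction of the topology comparison you highlight ($d_k$-convergence implies SOT-convergence on $B_k$, so the $d_k$-topology refines the relative SOT); together with the fact that the trace of $\mathcal F$ on $B_k$ is the Borel sigma algebra of the relative SOT, this gives $\mathcal F\cap B_k\subseteq\mathcal A$ as you claim. You are also right to flag that separability of $X$ is where the reverse inclusion lives: without it the SOT Borel sigma algebra need not be generated by evaluations, and indeed the paper assumes $X$ separable throughout.
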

\begin{proof}
Lemma A.4 of \cite{gonzalez2011semi} checks the equivalence of these conditions.
\end{proof}
The following hold:
\begin{lem}\label{growth_ineq}
Let $T\in\mathcal B(X,Y)$ and $S\in\mathcal B(Y,Z)$.
\begin{itemize}
\item
$g_T(V)g_S(TV)\leq g_{S\circ T}(V)\leq\min\{\Vert T\vert_V\Vert g_S(TV),g_T(V)\Vert S\Vert\}$
\item
$\rho_k(T)\rho_k(S)\leq\rho_k(S\circ T)\leq\rho_k(T)\Vert S\Vert$
\end{itemize}
\end{lem}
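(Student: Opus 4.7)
The plan is to reduce both items to direct manipulations of the defining infimum and supremum. The first item concerns a single subspace $V$ and so yields to pointwise arguments on $\mathbb S_V$, while the upper bound in the second item follows immediately by taking the supremum. The lower bound in the second item is the one step I expect to be delicate.

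For the lower bound in the first item I would fix $x\in\mathbb S_V$. If $Tx=0$ then $g_T(V)=0$ and the bound is vacuous; otherwise $Tx/\Vert Tx\Vert\in\mathbb S_{TV}$, so $\Vert STx\Vert=\Vert Tx\Vert\cdot\Vert S(Tx/\Vert Tx\Vert)\Vert\geq\Vert Tx\Vert g_S(TV)\geq g_T(V)g_S(TV)$, and taking the infimum over $x$ yields the claim. For the two upper bounds I pass through near-minimisers. To show $g_{ST}(V)\leq g_T(V)\Vert S\Vert$ I pick $x_\epsilon\in\mathbb S_V$ with $\Vert Tx_\epsilon\Vert\leq g_T(V)+\epsilon$ and bound $\Vert STx_\epsilon\Vert\leq\Vert S\Vert\Vert Tx_\epsilon\Vert$, then send $\epsilon\to0$. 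To show $g_{ST}(V)\leq\Vert T\vert_V\Vert g_S(TV)$ I pick $y_\epsilon\in\mathbb S_{TV}$ with $\Vert Sy_\epsilon\Vert\leq g_S(TV)+\epsilon$, write $y_\epsilon=Tz_\epsilon/\Vert Tz_\epsilon\Vert$ for some $z_\epsilon\in V\setminus\ker T$, and set $x_\epsilon=z_\epsilon/\Vert z_\epsilon\Vert$; then $\Vert STx_\epsilon\Vert=(\Vert Tz_\epsilon\Vert/\Vert z_\epsilon\Vert)\Vert Sy_\epsilon\Vert\leq\Vert T\vert_V\Vert(g_S(TV)+\epsilon)$.

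The upper bound $\rho_k(ST)\leq\rho_k(T)\Vert S\Vert$ in the second item is then immediate: taking the supremum over $V\in\mathcal G_kX$ on both sides of $g_{ST}(V)\leq g_T(V)\Vert S\Vert$ gives the inequality. The lower bound $\rho_k(T)\rho_k(S)\leq\rho_k(ST)$ is where I expect the main difficulty. The obvious attempt — select $V_\epsilon$ near-maximising $g_T$ and apply the pointwise lower bound of item one — produces only $\rho_k(ST)\geq(\rho_k(T)-\epsilon)g_S(TV_\epsilon)$, and the image $TV_\epsilon$ is some specific $k$-plane in $Y$ with no a priori reason to come close to realising $\rho_k(S)$. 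To close the gap I would look to perturb $V_\epsilon$ so that $TV_\epsilon$ approaches a near-maximiser $W$ of $g_S$ (which forces $W$ to lie close to the range of $T$), or else mount a joint optimisation over $V$ and $W$ producing a single $k$-plane compatible with near-optimisers of both factors at once. This coupling of the two suprema is the real heart of the statement and the part where any substantive work lies.
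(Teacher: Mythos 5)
Your treatment of the first item and of the upper bound $\rho_k(S\circ T)\leq\rho_k(T)\Vert S\Vert$ is correct and complete; the paper states this lemma without proof, and your pointwise arguments on $\mathbb S_V$ together with passing to near-minimisers are exactly what a written-out proof would contain. (One small point of hygiene: in the bound $g_{ST}(V)\leq\Vert T\vert_V\Vert g_S(TV)$ you should note that if $g_{ST}(V)>0$ then $T\vert_V$ is injective, so $TV$ is genuinely $k$-dimensional and every $y\in\mathbb S_{TV}$ has the form $Tz/\Vert Tz\Vert$; if $g_{ST}(V)=0$ the inequality is vacuous.)

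The step you flag as ``the real heart'' — the lower bound $\rho_k(T)\rho_k(S)\leq\rho_k(S\circ T)$ — cannot be closed, because the inequality is false as stated. Specialise to $k=1$: since $g_T(\spn\{x\})=\Vert Tx\Vert$ for unit $x$, one has $\rho_1=\Vert\cdot\Vert$, and the claim becomes $\Vert T\Vert\,\Vert S\Vert\leq\Vert S\circ T\Vert$, which is the reverse of the true inequality. Concretely, on $X=Y=Z=\mathbb R^2$ take $T$ the coordinate projection onto the first axis and $S$ the projection onto the second; then $\rho_1(T)=\rho_1(S)=1$ but $S\circ T=0$. (For general $k$, use the projections of $\mathbb R^{2k}$ onto the first and last $k$ coordinates.) The obstruction you identified is real: $\rho_k(S)$ is a supremum over all $k$-planes of $Y$, and nothing forces a near-maximiser to sit inside, or even near, the range of $T$; no perturbation or joint-optimisation argument can manufacture one. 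Your instinct to distrust this step was correct, and the honest conclusion is that only the first item and the submultiplicative upper bound of the second are provable; these are also the only parts of the lemma that the rest of the paper actually invokes.
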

We make use of the following straightforward construction:
\begin{lem}\label{lexstruct}
let $\Omega$ be a measurable space.
Suppose that $\{x_n\}_{n\in\mathbb N}\subseteq X$ for some measurable space $X$.
Then if we can write down a measurable set $S_n\subseteq\Omega$ for each $n\in\mathbb N$ such that $\bigcup_nS_n=\Omega$ then there is an associated measurable $N\colon\Omega\rightarrow\mathbb N$ given by $N_\omega=\inf\{n:\omega\in S_n\}$.
Further, the map $\omega\mapsto x_{N_\omega}$ is measurable.
\end{lem}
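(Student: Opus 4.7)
The plan is to handle the two measurability claims in sequence, each following directly from writing down level sets as countable Boolean combinations of the given $S_n$.

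First, I will verify measurability of $N\colon\Omega\to\mathbb N$ by checking preimages of singletons. Since $\mathbb N$ carries the discrete $\sigma$-algebra, it suffices to observe that for each $k\in\mathbb N$,
\[
\{\omega:N_\omega=k\}=S_k\setminus\bigcup_{j<k}S_j,
\]
which is measurable as a finite difference of countable unions of measurable sets. The hypothesis $\bigcup_nS_n=\Omega$ ensures $N_\omega$ is finite for every $\omega$, so these level sets indeed partition $\Omega$.

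Second, for the map $\omega\mapsto x_{N_\omega}$, I will check preimages of an arbitrary measurable $A\subseteq X$. Partitioning $\Omega$ by the value of $N_\omega$ gives
\[
\{\omega:x_{N_\omega}\in A\}=\bigsqcup_{k\in\mathbb N,\ x_k\in A}\{\omega:N_\omega=k\},
\]
which is a countable union of sets already shown to be measurable, hence measurable. This step uses only that the indexing family $\{x_n\}$ is countable and that $N$ takes values in $\mathbb N$; no structure on $X$ beyond a $\sigma$-algebra is required.

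There is no real obstacle here: the lemma is essentially a bookkeeping statement that the discrete $\sigma$-algebra on $\mathbb N$ plus countable stability of the $\sigma$-algebra on $\Omega$ gives both conclusions. The only point worth emphasising in writing the proof is that the lemma is applied later to construct measurable selections from countable families (e.g.\ picking indices where certain growth or dimension conditions first hold), so the form $N_\omega=\inf\{n:\omega\in S_n\}$ with measurable $S_n$ is the natural input pattern.
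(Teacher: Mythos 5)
Your proof is correct; the paper itself offers no written proof for this lemma (it is labelled a ``straightforward construction''), and your argument via the level sets $\{N_\omega=k\}=S_k\setminus\bigcup_{j<k}S_j$ and the countable partition of $\Omega$ is exactly the standard bookkeeping the paper implicitly relies on.
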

This principle is applied to check measurability in the context of spaces which are separable.
\section{Measurable statistics for linear operators}
\begin{lem}
Let $X$ be a normed space.
Then $g\colon\mathcal B(X)\times\mathcal G_kX\rightarrow[0,\infty)$ is continuous.
\end{lem}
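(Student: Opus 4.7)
The plan is to equip $\mathcal G_kX$ with the Hausdorff metric $d(V,V')=d_H(\mathbb S_V,\mathbb S_{V'})$, one of the equivalent choices announced after the definition of $\mathcal GX$, and to exploit the fact that each $V\in\mathcal G_kX$ is finite dimensional so that $\mathbb S_V$ is compact and the infimum defining $g(T,V)$ is attained. I will establish continuity at an arbitrary point $(T,V)$ by proving upper and lower semicontinuity separately, tested against any sequence $(T_n,V_n)\to(T,V)$ in $\mathcal B(X)\times\mathcal G_kX$.

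For upper semicontinuity I pick $x\in\mathbb S_V$ realising $\|Tx\|=g(T,V)$ and, using $d_H(\mathbb S_{V_n},\mathbb S_V)\to0$, choose $x_n\in\mathbb S_{V_n}$ with $\|x_n-x\|\to0$. The triangle inequality then gives
$$
g(T_n,V_n)\leq\|T_nx_n\|\leq\|T_n-T\|+\|T\|\cdot\|x_n-x\|+\|Tx\|\longrightarrow g(T,V),
$$
hence $\limsup_n g(T_n,V_n)\leq g(T,V)$. For lower semicontinuity I select $x_n\in\mathbb S_{V_n}$ attaining $\|T_nx_n\|=g(T_n,V_n)$ and pair each with $y_n\in\mathbb S_V$ satisfying $\|y_n-x_n\|\to0$. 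Compactness of $\mathbb S_V$ supplies a subsequence along which $y_n\to y\in\mathbb S_V$; then $x_n\to y$ on that subsequence, and
$$
\bigl|\,\|T_nx_n\|-\|Ty\|\,\bigr|\leq\|T_n-T\|+\|T\|\cdot\|x_n-y\|\longrightarrow0.
$$
Every subsequential limit of $g(T_n,V_n)$ therefore equals $\|Ty\|\geq g(T,V)$, giving $\liminf_n g(T_n,V_n)\geq g(T,V)$.

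The only delicate ingredient is producing the comparison vectors $x_n\in\mathbb S_{V_n}$ or $y_n\in\mathbb S_V$, and this is immediate from the Hausdorff convergence of unit spheres together with compactness of $\mathbb S_V$ for the subsequence extraction. I do not anticipate a serious obstacle: finite-dimensionality of $V$ delivers both the attainment of the infima and the compactness needed, so no extra hypothesis on $X$ is required.
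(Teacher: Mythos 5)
Your proof is correct and rests on the same ingredients as the paper's: compactness of $\mathbb S_V$ to attain the infimum, transferring the minimiser between nearby spheres via the Hausdorff distance, and the triangle inequality $\|T'x'\|\leq\|T'-T\|+\|T\|\|x'-x\|+\|Tx\|$. The only difference is presentational — you split into upper and lower semicontinuity with a subsequence extraction, while the paper normalises $T$ to the unit sphere of $\mathcal B(X)$ and runs the same estimate symmetrically in $(S,V)$ and $(T,W)$ to get uniform continuity there — so this is essentially the paper's argument.
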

\begin{proof}
Let $\epsilon>0$.
Here use the following metric inducing the product topology:
$$
d\big((S,V),(T,W)\big)=\Vert S-T\Vert+d(V,W).
$$
First note that $g$ is continuous at $(0,V)$ for any $V$ since $g(0,V)=0$ and if $d(V,W)+\Vert T-0\Vert<\epsilon$ then $g(T,W)<\epsilon$.
Otherwise, the projection onto the sphere $p_{\mathbb S}T=\tfrac1{\Vert T\Vert}T$ is continuous.
Let $\epsilon>0$.
Since $g(T,V)=\Vert T\Vert g(p_{\mathbb S}T,V)$ for nonzero $T$, it suffices to check $g$ is continuous on $\mathbb S_{\mathcal B(X)}\times\mathcal G_kX$.
Let
$$
(S,V),(T,W)\in\mathbb S_{\mathcal B(X)}\times\mathcal G_kX
$$
be such that
$$
d\big((S,V),(T,W)\big)=\Vert S-T\Vert+d(V,W)<\epsilon.
$$
Since $V$ and $W$ are finite dimensional their spheres are compact and we may choose an $x\in\mathbb S_V$ with $\Vert Sx\Vert=g(S,V)$.
Choose a $y\in\mathbb S_W$ with $\Vert x-y\Vert<\epsilon$.
Then
\begin{align*}
g(T,W)\leq&\Vert Ty\Vert\leq\Vert T-S\Vert+\Vert S\Vert\Vert(y-x)\Vert+\Vert Sx\Vert\\
<&\epsilon+\epsilon+g(S,V).
\end{align*}
By the same argument swapping $(S,V)$ and $(T,W)$ then, $g(S,V)<g(T,W)+2\epsilon$ and continuity (in fact, uniform continuity) on $\mathbb S_{\mathcal B(X)}\times\mathcal G_kX$ is clear.
Thus $g$ is continuous on $\mathcal B(X)\times\mathcal G_kX$ as required.
\end{proof}
\begin{cor}
The quantities $\rho_k:\mathcal B(X)\rightarrow\mathbb R$ are measurable functions.
\end{cor}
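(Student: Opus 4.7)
The plan is to exploit the just-proved joint continuity of $g$ together with the separability of $\mathcal G_kX$ from Lemma \ref{GXsep} to write $\rho_k$ as a countable supremum of continuous functions of $T$. Implicitly one works in the setting of a separable Banach space $X$, since that is where Lemma \ref{GXsep} guarantees $\mathcal G_kX$ is separable.

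First, I would fix a countable dense subset $D=\{V_n\}_{n\in\mathbb N}\subseteq\mathcal G_kX$, which exists by Lemma \ref{GXsep}. For each $n$, the map $T\mapsto g(T,V_n)$ is continuous on $\mathcal B(X)$: this is just the restriction of the jointly continuous map $g$ of the preceding lemma to the slice $\mathcal B(X)\times\{V_n\}$. In particular each such map is Borel measurable with respect to the norm topology on $\mathcal B(X)$.

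Next, I claim that for every $T\in\mathcal B(X)$,
$$
\rho_k(T)=\sup_{V\in\mathcal G_kX}g(T,V)=\sup_{n\in\mathbb N}g(T,V_n).
$$
The inequality $\geq$ is immediate. For the reverse, given $V\in\mathcal G_kX$ and $\varepsilon>0$, joint continuity (applied with $S=T$ fixed) produces a neighborhood of $V$ on which $g(T,\cdot)$ is within $\varepsilon$ of $g(T,V)$; density of $D$ then yields some $V_n$ in this neighborhood, giving $g(T,V_n)>g(T,V)-\varepsilon$. Taking $\varepsilon\to0$ and then supremum over $V$ gives the claim.

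Finally, a countable supremum of measurable (indeed continuous) real-valued functions is Borel measurable, so $\rho_k=\sup_n g(\cdot,V_n)$ is measurable as required. The only point of care is that the identity $\rho_k(T)=\sup_n g(T,V_n)$ must hold for every $T$ and that the selected countable set $D$ does not depend on $T$; both are handled by the density argument above, so there is no real obstacle.
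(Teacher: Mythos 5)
Your argument is correct and is exactly the paper's approach: write $\rho_k$ as the supremum of the continuous slices $T\mapsto g(T,V)$ over a countable dense subset of $\mathcal G_kX$ (using separability from Lemma \ref{GXsep} and the continuity of $g$ from the preceding lemma), and conclude measurability as a countable supremum of measurable functions. You have simply spelled out the density step that the paper leaves implicit.
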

\begin{proof}
The $\rho_k$ may now be written as the supremum of the functions $\{T\mapsto g(T,V):V\in\mathcal G_kX\}$ which may be written as the supremum of a countable family since $\mathcal G_kX$ is separable.
\end{proof}
\begin{lem}{Grassmannian contraction estimates:}\label{pushclose}
Let $T\in\mathcal B(X)$ and $\Theta\in\left(\bern[k+1]T,\bern T\right)$.
Suppose that $V,W\in\mathcal G_kX$ are choices of fast growing spaces:
$$
g_T(V),g_T(W)>\Theta.
$$
Then
$$
d(TV,TW)<\frac{2}{1-\frac{\bern[k+1]T}\Theta}\frac{\bern[k+1]T}\Theta.
$$
In particular, if $\Theta>2\bern[k+1]T$ then
$$
d(TV,TW)<4\frac{\bern[k+1]T}\Theta.
$$
\end{lem}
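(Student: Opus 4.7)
The plan is to bound $\sup_{x\in\mathbb S_{TV}}\inf_{y\in TW}\Vert x-y\Vert$ and conclude by the symmetric estimate with $V$ and $W$ interchanged. Fix $x\in\mathbb S_{TV}$ and write $x=Tv$ with $v\in V$; since $g_T(V)>\Theta$, we have $\Vert v\Vert\leq 1/g_T(V)<1/\Theta$.

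The next step is to produce an approximating vector in $TW$. Form $U:=W+\spn(v)$, which has dimension $k+1$ unless $v\in W$ (in which case the approximation is trivial). By the definition of $\bern[k+1]T$ as a supremum over $\mathcal G_{k+1}X$, $g_T(U)\leq\bern[k+1]T$, and since $U$ is finite-dimensional this infimum is attained: pick $u\in\mathbb S_U$ with $\Vert Tu\Vert=g_T(U)\leq\bern[k+1]T$ and write $u=\alpha v+w$ with $w\in W$. The possibility $\alpha=0$ is ruled out immediately, since it would force $u\in\mathbb S_W$ with $\Vert Tu\Vert\geq g_T(W)>\Theta>\bern[k+1]T$. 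Setting $w':=-w/\alpha\in W$, the identity $Tv-Tw'=Tu/\alpha$ reduces the problem to a lower bound on $|\alpha|$.

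This lower bound is the one real calculation. From $\Vert u\Vert=1$ one extracts $|\alpha|\Vert v\Vert\geq 1-\Vert w\Vert$, and from $g_T(W)>\Theta$ together with $\Vert Tv\Vert=1$ one extracts
$$
\Vert w\Vert\leq\Vert Tw\Vert/\Theta\leq\bigl(\Vert Tu\Vert+|\alpha|\Vert Tv\Vert\bigr)/\Theta=\bigl(\Vert Tu\Vert+|\alpha|\bigr)/\Theta.
$$
Substituting and using $\Vert v\Vert<1/\Theta$ yields $|\alpha|>(\Theta-\Vert Tu\Vert)/2$, whence $\Vert Tv-Tw'\Vert=\Vert Tu\Vert/|\alpha|<2\Vert Tu\Vert/(\Theta-\Vert Tu\Vert)$. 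Since the function $t\mapsto 2t/(\Theta-t)$ is increasing on $[0,\Theta)$ and $\Vert Tu\Vert\leq\bern[k+1]T$, this gives the announced bound on $\inf_{y\in TW}\Vert x-y\Vert$. The symmetric argument with $V,W$ swapped controls the reverse estimate, and the specialized statement for $\Theta>2\bern[k+1]T$ is immediate from $1-\bern[k+1]T/\Theta>\tfrac12$.

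The main obstacle is the choice of normalization: parameterizing the fast direction by $\Vert Tv\Vert=1$ (rather than $\Vert v\Vert=1$) is what makes the lower bound on $|\alpha|$ depend only on the ratio $\bern[k+1]T/\Theta$. The alternative introduces the overall operator norm $\Vert T\Vert$ and spoils the dimensionless estimate.
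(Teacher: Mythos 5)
Your proof is correct and follows essentially the same route as the paper's: both arguments adjoin a transversal fast vector to $W$ to form a $(k+1)$-dimensional space, extract from it a vector with $\Vert Tu\Vert\leq\bern[k+1]T$, and use the fastness hypotheses to show its component along the transversal direction cannot be small, yielding an approximant of $Tv$ inside $TW$. The only difference is bookkeeping — you normalize $\Vert Tv\Vert=\Vert u\Vert=1$ and bound $|\alpha|$ from below, while the paper normalizes the transversal coefficient to $1$ and bounds $\Vert Tc\Vert$ directly — and your version is slightly more careful in explicitly ruling out $\alpha=0$ and noting that the infimum is attained.
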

\begin{proof}
If $V=W$ there is nothing to prove so assume otherwise.
Let $a\in V\setminus W$.
Then $W\oplus\spn\{a\}\in\mathcal G_{k+1}X$ so that there is some $a-b\in W\oplus\spn\{a\}$ with $b\in W$ and $g_T(a-b)\leq \bern[k+1]T$.
Set $c=a-b$.
By fastness $\Vert a\Vert\leq\frac{\Vert Ta\Vert}\Theta$ and $\Vert b\Vert\leq\frac{\Vert Tb\Vert}\Theta$.

\begin{align*}
\Vert Tc\Vert\leq\Vert c\Vert \bern[k+1]T&\leq(\Vert a\Vert+\Vert b\Vert)\bern[k+1]T\\
                                      &\leq\frac{\Vert Ta\Vert+\Vert Tb\Vert}\Theta \bern[k+1]T\\
                                      &\leq \frac{2\Vert Ta\Vert+\Vert Tc\Vert}\Theta \bern[k+1]T,
\end{align*}
So that
$$
\Vert Tc\Vert<\frac{2}{1-\frac{\bern[k+1]T}\Theta }\frac{\bern[k+1]T}\Theta \Vert Ta\Vert.
$$
Then since $a$ isn't in the kernel then $d(\frac{Ta}{\Vert Ta\Vert},TW)\leq\frac{\Vert Tc\Vert}{\Vert Ta\Vert}<\frac{2}{1-\frac{\bern[k+1]T}\Theta}\frac{\bern[k+1]T}\Theta $.
Since the choice of $a$ was arbitrary,
$$
d(TV,TW)<\frac{2}{1-\frac{\bern[k+1]T}\Theta }\frac{\bern[k+1]T}\Theta.
$$
\end{proof}
\begin{lem}
Let $k\in\mathbb N$.
Then there is a measurable $U:\mathcal G_kX\rightarrow\mathcal G_{k-1}X$ such that $U(V)<V$.
\end{lem}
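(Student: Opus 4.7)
The plan is to assign to each $V\in\mathcal{G}_kX$ the kernel on $V$ of the first functional in a fixed countable family $\{\phi_n\}\subset X^*$ that does not vanish on $V$. By separability of $X$, a dense sequence $\{x_n\}\subset X$ and Hahn--Banach extensions $\phi_n\in X^*$ with $\|\phi_n\|=1$ and $\phi_n(x_n)=\|x_n\|$ produce such a family: for nonzero $x\in X$, choosing $x_n$ with $\|x-x_n\|<\|x\|/2$ gives $\phi_n(x)\geq\|x_n\|-\|x-x_n\|>0$, so $\{\phi_n\}$ separates points. Setting $n(V)=\inf\{n:\phi_n|_V\neq 0\}$ and $U(V)=V\cap\ker\phi_{n(V)}$, separation applied to any nonzero $v\in V$ makes $n(V)$ finite, and $\phi_{n(V)}|_V$ being a nonzero functional on the $k$-dimensional $V$ gives $\dim U(V)=k-1$; the degenerate case $k=1$ collapses harmlessly to $U(V)=\{0\}$.

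For measurability, I would first observe that $V\mapsto\|\phi_n|_V\|$ is Lipschitz with respect to the Hausdorff metric on unit spheres (given $v\in\mathbb S_V$ and nearby $w\in\mathbb S_W$, $|\phi_n(v)-\phi_n(w)|\leq\|v-w\|$, and taking sups gives the bound), so $S_n=\{V:\phi_n|_V\neq 0\}$ is open; since $\{\phi_n\}$ separates points these cover $\mathcal{G}_kX$, and Lemma~\ref{lexstruct} applied with $x_n=n\in\mathbb N$ makes $n(V)$ measurable. This decomposes $\mathcal{G}_kX$ into measurable pieces $\{V:n(V)=n\}$, reducing the problem to measurability of $V\mapsto V\cap\ker\phi_n$ for each fixed $n$.

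The main obstacle is this fixed-$n$ measurability. I would prove $V\mapsto V\cap\ker\phi_n$ is continuous on each closed stratum $\{V:\|\phi_n|_V\|\geq 1/m\}$: any such $V$ admits some $v_V\in V$ with $\phi_n(v_V)=1$ and $\|v_V\|\leq m$, and for $V_j\to V$ with $x\in V\cap\ker\phi_n$, lifting $x$ to $\tilde x_j\in V_j$ via Hausdorff convergence of unit spheres and correcting by $\tilde x_j-\phi_n(\tilde x_j)v_{V_j}\in V_j\cap\ker\phi_n$ produces elements converging to $x$ (since $\phi_n(\tilde x_j)\to\phi_n(x)=0$ and $\|v_{V_j}\|\leq m$); conversely any accumulation point of unit vectors in $V_j\cap\ker\phi_n$ sits in $V\cap\ker\phi_n$. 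This delivers Hausdorff convergence of the unit spheres of the intersections. Writing $\{V:n(V)=n\}\subseteq\bigcup_m\{V:\|\phi_n|_V\|\geq 1/m\}$ as a countable union of closed pieces on each of which the map is continuous then yields the measurability of $U$.
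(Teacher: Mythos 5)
Your argument is correct, but it is a genuinely different construction from the one in the paper. The paper works entirely inside the Grassmannian: it fixes a countable dense subset $\{U_s\}$ of $\mathcal G_{k-1}X$, greedily selects (via the principle of Lemma~\ref{lexstruct}) a sequence $U_{s_i(V)}$ lying in the nonempty open sets $B_{2^{1-i}}(U_{s_{i-1}})\cap\bigcup_{W<V}B_{2^{-i}}(W)$, and obtains $U(V)$ as the limit of this Cauchy sequence of measurable functions; the containment $U(V)<V$ is then read off from $\sup_{v\in\mathbb S_{U(V)}}d(v,V)\leq2^{-i}$ for all $i$. You instead go through the dual: a Hahn--Banach separating family $\{\phi_n\}\subset X^*$, the measurable index $n(V)=\inf\{n:\phi_n|_V\neq0\}$, and the explicit hyperplane section $U(V)=V\cap\ker\phi_{n(V)}$, with measurability delivered by continuity of $W\mapsto W\cap\ker\phi_n$ on the closed strata $\{\|\phi_n|_W\|\geq1/m\}$. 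The trade-off is roughly this: the paper's route needs only separability and completeness of $\mathcal G_{k-1}X$ and avoids any analysis of the intersection map, at the cost of producing $U(V)$ only as a limit and leaving the nonemptiness of the selection sets and the measurability of each $s_i$ to the reader; your route gives a canonical, locally constant-in-$n$ formula for $U(V)$ and makes the containment $U(V)<V$ trivial, at the cost of the stratified continuity argument for $V\mapsto V\cap\ker\phi_n$, which is where all the work sits (and which you handle correctly, using compactness of spheres in finite dimensions and the uniform bound $\|v_{V_j}\|\leq m$ to control the correction term). Both proofs are sound; yours is arguably more explicit, the paper's more uniform with the selection machinery it reuses elsewhere.
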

\begin{proof}
Let $\{U_1,U_2,\cdots\}$ be dense in $\mathcal G_{k-1}X$.
Suppose $V\in\mathcal G_kX$.
Let $s_0(V)=1$ and
$$
s_i(V)=\inf\{s:\sup_{v\in\mathbb S_{U_s}}d(v,V)<2^{-i}\text{ and }d(U_s,U_{s_{i-1}(V)})<2^{1-i}\}\text{ for }i>0.
$$
In other words, $U_{s_i(V)}$ is the first member of the dense collection to fall in the nonempty open set
$$
B_{2^{1-i}}(U_{s_{i-1}})\cap\bigcup_{W\in\mathcal G_{k-1}X:W<V}B_{2^{-i}}(W).
$$
Then $U_{s_i}$ is Cauchy and so the pointwise limit of measurable functions $U_{s_i(V)}$ is convergent to some measurable $U(V)\in\mathcal G_{k-1}X$.
Further, $\sup_{v\in\mathbb S_{U(V)}}d(v,V)\leq 2^{-i}$ for any $i$, so $U(V)<V$.
\end{proof}
\begin{lem}
Let $k\in\mathbb N$.
Let $U\in\mathcal M(\mathcal G_kX\rightarrow\mathcal G_{k-1}X)$ with $U(V)<V$ as guaranteed by the previous lemma.
Then there is a measurable $b:\mathcal G_kX\rightarrow\mathbb S_X$ such that $U\oplus\spn\{b\}$ is the identity on $\mathcal  G_kX$.
\end{lem}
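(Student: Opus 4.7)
The plan is to realise $b(V)$ as the limit of a measurable Cauchy sequence $(x_{n_i(V)})_i$ drawn from a countable dense subset $\{x_n\}_{n\in\mathbb N}\subseteq\mathbb S_X$ (available since $X$ is separable), mimicking the construction of $U$ in the previous lemma. The geometric input is that for any $V\in\mathcal G_kX$ the quotient $V/U(V)$ is one-dimensional and finite-dimensional, so there exists $b\in\mathbb S_V$ with $d(b,U(V))=1$; hence the dense family contains approximants that are simultaneously close to $V$ and far from $U(V)$.

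For $i=0$, let $n_0(V)=\inf\{n:d(x_n,V)<1/16\text{ and }d(x_n,U(V))>7/8\}$. For $i\geq 1$, let $n_i(V)$ be the first $n$ with $d(x_n,V)<2^{-i-4}$ and $\|x_n-x_{n_{i-1}(V)}\|<C\cdot 2^{-i}$ for a small fixed constant $C$. Each defining condition is measurable in $V$: $V\mapsto d(x_n,V)$ is continuous on $\mathcal G_kX$, and $V\mapsto d(x_n,U(V))$ is measurable because $U$ is. Lemma~\ref{lexstruct} then gives measurable $n_i:\mathcal G_kX\to\mathbb N$ by induction, so each $V\mapsto x_{n_i(V)}$ is measurable.

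To verify the scheme is nonempty, first check that $n_0(V)$ is attained: pick $b\in\mathbb S_V$ with $d(b,U(V))=1$, then any $x_n$ within $1/32$ of $b$ satisfies both conditions. For $i\geq 1$, given $x_{n_{i-1}(V)}$ within $2^{-i-3}$ of $V$, choose $y\in V$ within a comparable distance and then $x_n$ from the dense set very close to $y$; with $C$ chosen appropriately this $x_n$ satisfies both constraints. By construction the sequence is Cauchy and converges to some $b(V)\in X$. Since $d(x_{n_i(V)},V)\to 0$ and $V$ is closed, $b(V)\in V$; unit norm is preserved in the limit; and the cumulative change of $d(\cdot,U(V))$ along the sequence is bounded by $\sum_{i\geq 1}C\cdot 2^{-i}=C$, so for small enough $C$ the bound $d(b(V),U(V))>1/2$ persists, placing $b(V)$ in $\mathbb S_V\setminus U(V)$. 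Since $U(V)$ has codimension one in $V$ this gives $V=U(V)\oplus\spn\{b(V)\}$, and $b$ is measurable as a pointwise limit of measurable functions.

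The main obstacle is calibrating the two competing constraints in the recursion: the Cauchy condition must be loose enough that, given any previous iterate approximately in $V$, a new iterate from the dense sequence can be chosen that both sits closer to $V$ and stays near its predecessor; yet it must be tight enough that the total drift in distance to $U(V)$ does not consume the initial $7/8$ separation. The explicit scales $2^{-i-4}$ and $C\cdot 2^{-i}$ are chosen to meet both demands.
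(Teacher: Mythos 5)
Your argument is correct and follows essentially the same route as the paper's: lexicographic measurable selection from a countable dense subset of $\mathbb S_X$ via Lemma~\ref{lexstruct}, a geometric Cauchy condition on successive selections, and a pointwise limit. The only difference is the selection criterion --- the paper asks that $d(U(V)\oplus\spn\{x_t\},V)$ be small where you ask that $x_n$ be close to $V$ and uniformly far from $U(V)$ --- and your variant has the merit of making the limit's transversality to $U(V)$ explicit; the calibration you flag does close, e.g.\ with $C=5/16$, since nonemptiness of each step needs only $C>1/4$ while the drift bound needs $C<3/8$.
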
 
\begin{proof}
Let $\{x_1,x_2,\cdots\}$ be dense in $\mathbb S_X$.
Let $t_0(V)=1$ and
$$
t_i(V)=\inf\{t:d(U(V)\oplus\spn\{x_t\},V)<2^{-t}\text{ and }\Vert x_t-x_{t_{i-1}}\Vert<2^{1-i}\}\text{ for }i>0.
$$
Again we get that $x_{t_i}$ converges pointwise to some measurable $b$ with the desired properties.
\end{proof}
Applying this result recursively we obtain the following:
\begin{lem}\label{measbase}
Let $X$ be a separable Banach space.
Then for every $k>0$ there is a measurable map $b:\mathcal G_kX\rightarrow\mathbb S_X^k$ such that for each $V\in\mathcal G_kX$,
$$
V=\spn\{b_i(V)\}_{i=1}^k.
$$
\end{lem}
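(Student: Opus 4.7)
The plan is to proceed by induction on $k$, using the two preceding lemmas to peel off one dimension at a time while preserving measurability.

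For the base case $k=1$, apply the preceding lemma with input $\mathcal{G}_1 X$; it yields a measurable $b\colon\mathcal{G}_1 X\to\mathbb{S}_X$ with $\spn\{b(V)\}=V$ (the auxiliary $U(V)$ lives in $\mathcal G_0X=\{0\}$ and contributes nothing). This gives the required single-vector selection.

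For the inductive step, suppose the claim is established at level $k-1$, so there is a measurable $\tilde b\colon\mathcal G_{k-1}X\to\mathbb S_X^{k-1}$ with $\spn\{\tilde b_i(W)\}_{i=1}^{k-1}=W$. Given $V\in\mathcal G_kX$, first apply the penultimate lemma to get a measurable $U\colon\mathcal G_kX\to\mathcal G_{k-1}X$ with $U(V)<V$, and then the preceding lemma to obtain a measurable $c\colon\mathcal G_kX\to\mathbb S_X$ with $V=U(V)\oplus\spn\{c(V)\}$. Define
\[
b_i(V)=\tilde b_i\bigl(U(V)\bigr)\text{ for }1\leq i\leq k-1,\qquad b_k(V)=c(V).
\]
Then $\spn\{b_i(V)\}_{i=1}^{k-1}=U(V)$ by the inductive hypothesis, so $\spn\{b_i(V)\}_{i=1}^k=U(V)\oplus\spn\{c(V)\}=V$ as required.

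Measurability of $b\colon\mathcal G_kX\to\mathbb S_X^k$ is immediate because each component is a composition of measurable maps: $b_k=c$ is measurable by the preceding lemma, and $b_i=\tilde b_i\circ U$ for $i<k$ is the composition of the measurable $U$ with the inductively measurable $\tilde b_i$. There is no real obstacle once the two previous selection lemmas are in hand; the only thing to check is the bookkeeping of indices and the fact that at each recursive step we are invoking the selection lemmas on the space $\mathcal G_{k-1}X$ (rather than on a pointwise-dependent space), which is exactly how they were stated.
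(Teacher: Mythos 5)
Your proof is correct and is exactly what the paper intends: the paper gives no written proof beyond the remark ``Applying this result recursively we obtain the following,'' and your induction on $k$ (peeling off one dimension via the measurable $U$ and the measurable complementary vector, then composing $\tilde b_i\circ U$) is the natural formalization of that recursion.
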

\begin{lem}\label{Tsmallgrow}
Let $V\in\mathcal G^kX$ with bounded projection $\Pi:X\rightarrow V$.
Let $T\in\mathcal B(X)$.
Then $\rho_{k+l}(T)\leq\rho_l(T\circ\Pi)$.
\end{lem}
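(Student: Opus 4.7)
The plan is to take any $W \in \mathcal{G}_{k+l} X$ witnessing a value close to $\rho_{k+l}(T)$ on the left, and produce from it an $l$-dimensional subspace $U$ of $X$ witnessing at least as large a value of $g_{T \circ \Pi}(\cdot)$ on the right. The natural choice is to set $U$ to be an $l$-dimensional subspace of $W \cap V$, since on $V$ the projection $\Pi$ acts as the identity, converting $T \circ \Pi$ on $U$ back into $T$ on $U \subseteq W$.

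The key dimension count is that $\dim(W \cap V) \geq l$. Indeed, the quotient map $X \to X/V$ identifies $X/V$ with a $k$-dimensional space (here I use that $V$ has codimension $k$ and a bounded complement $\ker \Pi$ of dimension $k$), and restricted to $W$ it has kernel $W \cap V$ and image of dimension at most $k$, giving $\dim(W \cap V) \geq \dim W - k = l$. Pick any $l$-dimensional subspace $U \leq W \cap V$; since $U \leq V$, we have $\Pi u = u$ for every $u \in U$, and since $U \leq W$, any $u \in \mathbb{S}_U$ also lies in $\mathbb{S}_W$.

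Combining these observations,
\[
g_{T \circ \Pi}(U) = \inf_{u \in \mathbb{S}_U} \Vert T \Pi u \Vert = \inf_{u \in \mathbb{S}_U} \Vert T u \Vert \geq \inf_{w \in \mathbb{S}_W} \Vert T w \Vert = g_T(W).
\]
Taking the supremum over $U \in \mathcal{G}_l X$ on the left gives $\rho_l(T \circ \Pi) \geq g_T(W)$, and then taking the supremum over $W \in \mathcal{G}_{k+l} X$ on the right gives the claim.

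I do not expect any real obstacle: the only delicate point is verifying that $\dim(W \cap V) \geq l$ in the Banach space setting where $V$ itself may be infinite-dimensional, and this is handled cleanly by the quotient argument using the hypothesis that $V$ admits a bounded projection with $k$-dimensional kernel. Once that dimension bound is in hand, the equality $\Pi u = u$ on $V$ makes the rest immediate.
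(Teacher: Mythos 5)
Your proof is correct and follows essentially the same route as the paper: intersect a near-optimal $(k+l)$-dimensional subspace with $V$, note the intersection has dimension at least $l$ by the codimension count, and use that $\Pi$ is the identity on $V$ together with the monotonicity of $g_T$ under passing to subspaces. Your version merely spells out the dimension count via the quotient map, which the paper leaves implicit.
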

\begin{proof}
\begin{align*}
\rho_{k+l}(T)=&\sup_{U\in\mathcal G_{k+l}X}g_T(U)\leq\sup_{U\in\mathcal G_{k+l}X}g_T(U\cap V)\\
=&\sup_{U\in\mathcal G_{\leq k+l,\geq l}V}g_T(U)=\sup_{U\in\mathcal G_lV}g_{T|_V}(U)\\
=&\sup_{U\in\mathcal G_lV} g_{T\circ\Pi}(U)\leq\rho_l(T\circ\Pi).
\end{align*}
\end{proof}
\begin{lem}\label{Tbiggrow}
Let $X=E\oplus V=E'\oplus V'$ with corresponding projections $\Pi:X\rightarrow V\in\mathcal G^kX$ and $\Pi':X\rightarrow V'\in\mathcal G^kX$.
Let $T\in\mathcal B(X)$ such that $\Pi'\circ T=T\circ\Pi$, and suppose further that $g_T(E)>\Vert T|_V\Vert$.
Then $\rho_l(T\circ\Pi)\leq4\Vert\Pi\Vert\Pi'\Vert\rho_{l+k}(T)$.
\end{lem}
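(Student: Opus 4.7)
The plan is to exploit the intertwining $\Pi'T=T\Pi$---which forces $T(E)\subseteq E'$ and $T(V)\subseteq V'$, so $T$ respects both direct sum decompositions---by promoting a subspace on which $T\Pi$ grows well to an $(l{+}k)$-dimensional subspace on which $T$ itself grows well. Concretely, I would fix an arbitrary $U\in\mathcal G_lX$ with $g_{T\Pi}(U)>0$ (if $\rho_l(T\Pi)=0$ the claim is trivial), observe that $g_{T\Pi}(U)>0$ forces $\Pi|_U$ to be injective so that $\Pi U\in\mathcal G_lV$, and then form $\tilde U:=E\oplus\Pi U\in\mathcal G_{l+k}X$ (the sum is direct because $E\cap V=0$).

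The heart of the proof is the estimate
\[
g_T(\tilde U)\ \geq\ \frac{g_{T\Pi}(U)}{4\,\Vert\Pi\Vert\,\Vert\Pi'\Vert}.
\]
To prove it, write a unit vector $x\in\tilde U$ as $x=e+v$ with $e\in E$ and $v\in\Pi U$. The intertwining identifies $\Pi'Tx=Tv$ and $(1-\Pi')Tx=Te$, so
\[
\Vert Tx\Vert\ \geq\ \max\!\left(\frac{\Vert Tv\Vert}{\Vert\Pi'\Vert},\ \frac{\Vert Te\Vert}{\Vert 1-\Pi'\Vert}\right).
\]
Since $\Vert e\Vert+\Vert v\Vert\geq 1$, at least one of $\Vert e\Vert,\Vert v\Vert$ exceeds $\tfrac{1}{2}$, and I would handle the two cases separately. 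When $\Vert v\Vert\geq\tfrac12$, writing $v=\Pi u$ with $\Vert u\Vert\geq\Vert v\Vert/\Vert\Pi\Vert$ yields $\Vert Tx\Vert\geq\Vert T\Pi u\Vert/\Vert\Pi'\Vert\geq g_{T\Pi}(U)/(2\Vert\Pi\Vert\Vert\Pi'\Vert)$, which is stronger than required. When $\Vert e\Vert\geq\tfrac12$ instead, one has $\Vert Tx\Vert\geq g_T(E)/(2\Vert 1-\Pi'\Vert)$; this case is absorbed into the desired bound by combining the crude inequality $g_{T\Pi}(U)\leq\Vert T|_V\Vert\cdot\Vert\Pi\Vert$ (valid because $T\Pi u=T|_V(\Pi u)$), the standing hypothesis $g_T(E)>\Vert T|_V\Vert$, and $\Vert 1-\Pi'\Vert\leq 1+\Vert\Pi'\Vert\leq 2\Vert\Pi'\Vert$ (the last step using that every nontrivial projection satisfies $\Vert\Pi'\Vert\geq 1$).

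Taking the supremum over $U$ converts the core estimate into $\rho_{l+k}(T)\geq\rho_l(T\Pi)/(4\Vert\Pi\Vert\Vert\Pi'\Vert)$, which is the claim. The main obstacle is not analytic but bookkeeping: one must carefully verify that $\Pi|_U$ is injective whenever $g_{T\Pi}(U)>0$ (so that $\tilde U$ genuinely has dimension $l+k$), and that the constants in the $\Vert e\Vert\geq\tfrac12$ case really collapse onto the same bound once the hypothesis $g_T(E)>\Vert T|_V\Vert$ is used. The trivial configuration $\dim X=k$ (forcing $V=V'=0$) needs only a one-line remark since then $T\Pi=0$.
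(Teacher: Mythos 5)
Your argument is correct and is essentially the paper's own proof: both pass to a subspace of the form $E\oplus(\text{an }l\text{-dimensional subspace of }V)$, split a unit vector into its $E$- and $V$-components, use the intertwining $\Pi'T=T\Pi$ to get $\Vert Tx\Vert\geq\max\{\Vert Tv\Vert/\Vert\Pi'\Vert,\Vert Te\Vert/\Vert 1-\Pi'\Vert\}$, and invoke $g_T(E)>\Vert T|_V\Vert$ together with $\Vert\Pi'\Vert\geq1$ to absorb the case where the $E$-component dominates, yielding the same constant $4\Vert\Pi\Vert\Vert\Pi'\Vert$. The only cosmetic difference is that the paper first reduces to a near-optimal $U\leq V$ via $\rho_l(T\circ\Pi)\leq\Vert\Pi\Vert\rho_l(T|_V)$, whereas you take an arbitrary $U\leq X$ and push it into $V$ by $\Pi$, which places the factor $\Vert\Pi\Vert$ at a different point of the bookkeeping.
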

\begin{proof}
Make use of the inequality $\rho_l(T\circ\Pi)\leq\Vert\Pi\Vert\rho_l(T|_V)$.
Let $U\in\mathcal G_lV$ with $g_{T\circ\Pi}(U)>e^{-\epsilon}\rho_l(\Pi\circ T|_V)=e^{-\epsilon}\rho_l(T|_V)$.
Let $x\in\mathbb S_{U\oplus E}$.
Observe that $\Vert\Pi'Tx\Vert\leq\Vert\Pi'\Vert\Vert Tx\Vert$ and similar for $1-\Pi'$ so that
\begin{align*}
\Vert Tx\Vert\geq&\max\{\tfrac{\Vert\Pi'T x\Vert}{\Vert\Pi'\Vert},\tfrac{\Vert(1-\Pi')Tx\Vert}{\Vert1-\Pi'\Vert}\}\geq\max\{\tfrac{g_{T\Pi}(U)\Vert\Pi x\Vert}{\Vert\Pi'},\tfrac{g_T(E)\Vert(1-\Pi)x\Vert}{\Vert1-\Pi'\Vert}\}\\
\geq&\tfrac1{1+\Vert\Pi'\Vert}\max\{g_{T\Pi}(U)\Vert\Pi x\Vert,g_T(E)\Vert1-\Pi x\Vert\}\\
\geq&\tfrac{e^{-\epsilon}\rho_l(T|_V)}{1+\Vert\Pi'\Vert}\max\{\Vert\Pi x\Vert,\Vert(1-\Pi)x\Vert\}\geq\frac{e^{-\epsilon}\rho_l(T|_V)}{4\Vert\Pi'\Vert}\geq\tfrac{e^{-\epsilon}\rho_l(T\circ\Pi)}{4\Vert\Pi'\Vert\Vert\Pi\Vert}.
\end{align*}
We may then conclude that $g_T(U)\geq\tfrac{e^{-\epsilon}\rho_l(T\circ\Pi)}{4\Vert\Pi'\Vert\Vert\Pi\Vert}$ with $\epsilon>0$ arbitrary, so that $\rho_l(T\circ\Pi)\leq4\rho_{l+k}(T\circ\Pi)\Vert\Pi'\Vert\Vert\Pi\Vert$ as required.
\end{proof}
\section{A balanced subadditive ergodic theorem}
Consider a decomposition of the interval $[a,b)=[a,c)\sqcup[c,b)\subset\mathbb Z$.
In discrete time, where these intervals represent different intervals in which dynamics may occur.
Given a cocycle $\mathcal L:\Omega\rightarrow\mathcal B(X)$, for fixed $\omega$ and each $b\geq a\in\mathbb Z$ we may define the map $\Lab(\omega):=\calL[b-a][\sigma^a\omega]$, which may be thought of as the evolution rule for $X$ from time $a$ to time $b$.
Under this notation it is easy to see that for any $a<c<b\in Z$, $\Lab[c][b]\circ\Lab[a][b]=\Lab$.
The inequality $\Vert\Lab\Vert\leq\Vert\Lab[a][c]\Vert\Vert\Lab[c][b]\Vert$ holds.
Taking $\log$ of each side, one obtains the following triangle inequality-like bound on the growth of points in $X$ from time $a$ to $b$:
$$
f_{a\rightarrow b}(\omega):=\log\Vert\Lab\Vert\leq\log\Vert\Lab[a][c]\Vert+\log\Vert\Lab[c][b]\Vert=f_{a\rightarrow c}(\omega)+f_{c\rightarrow b}(\omega).
$$
\begin{defn}
Let $\mathcal F=\{f_n\}_{n\in\mathbb N_0}\subseteq\mathcal M(\Omega\rightarrow\mathbb R)$ satisfy
$$
f_{m+n}(\omega)\leq f_m(\sigma^n\omega)+f_n(\omega).
$$
Then $\mathcal F$ is referred to as a subadditive family of measurable functions.
\end{defn}
The following view is useful:
\begin{defn}
A subadditive family $\{f_n\}_{n\in\mathbb N_0}$ generates a stationary subadditive process $\{f_{a\rightarrow b}:a<b,a,b\in\mathbb Z\}$ and vice versa via the relation
$$
f_{a\rightarrow b}:=f_{b-a}\circ\sigma^a.
$$
A subadditive process is a collection $\{f_{a\rightarrow b}\}_{a<b\in\mathbb Z}\subseteq\mathcal M(\Omega\rightarrow\mathbb R)$ such that for all $a<c<b\in\mathbb Z$,
$$
f_{a\rightarrow b}\leq f_{a\rightarrow c}+f_{c\rightarrow b},
$$
and the stationarity condition is 
$$
f_{a+l\rightarrow b+l}=f_{a\rightarrow b}\circ\sigma^l.
$$
As such both notations may be interchanged as appropriate.
A similar formalism is outlined in \cite{krengel2011ergodic}.
\end{defn}

The Kingman theorem concerns subadditive families of functions, here a slight refinement when the underlying transformation is invertible will be required.
\begin{thm}[Kingman\cite{kingman1968ergodic}]\label{ing}Let $(\Omega,\sigma,\mathbb P)$ be an ergodic system and let $\{f_n\}_{n\in\mathbb N}\subset L^1\Omega$ be subadditive.
Then there is a constant $C\in[-\infty,\infty)$ such that
$$
\tfrac1nf_n(\omega)\rightarrow C=\lim_{n\rightarrow\infty}\tfrac1n\int f_n
$$
pointwise almost everywhere.
\end{thm}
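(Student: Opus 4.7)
The plan is to combine Fekete's lemma, ergodicity, and two separate arguments that squeeze $f_n/n$ between bounds. Specifically, a Birkhoff-style iteration of subadditivity bounds $\limsup f_n/n$ from above by $C$, and a stopping-time covering bounds $\liminf f_n/n$ from below by $C$.

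First, $\sigma$-invariance of $\mathbb{P}$ makes $a_n := \int f_n\,d\mathbb{P}$ subadditive, so Fekete's lemma produces $C = \lim a_n/n = \inf a_n/n \in [-\infty,\infty)$. Next, from $f_{n+1}\leq f_1+f_n\circ\sigma$ (divide by $n+1$, using $f_1\in L^1$ so $f_1/(n+1)\to 0$ a.s.), one has $\bar f(\omega):=\limsup f_n/n\leq\bar f(\sigma\omega)$ a.s. The sublevel set $\{\bar f\leq\lambda\}$ then contains its $\sigma$-preimage; $\sigma$-invariance of $\mathbb{P}$ forces equality modulo null sets, and ergodicity forces $\bar f$ to be an a.s.\ constant $\alpha$. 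The identical argument gives $\underline f:=\liminf f_n/n=\beta$ a.s.\ with $\beta\leq\alpha$.

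To show $\alpha\leq C$, split $n=qM+r$ with $0\leq r<M$ and iterate subadditivity:
$$
f_n\leq \sum_{k=0}^{q-1}f_M\circ\sigma^{kM}+f_r\circ\sigma^{qM}.
$$
Dividing by $n$ and sending $n\to\infty$ with $M$ fixed, Birkhoff's ergodic theorem applied to $\sigma^M$ bounds the first sum a.s.\ by $M^{-1}$ times the conditional expectation of $f_M$ on the $\sigma^M$-invariant $\sigma$-algebra, while the remainder satisfies $f_r(\sigma^{qM}\omega)/n\to 0$ a.s.\ since $|f_1(\sigma^n\omega)|/n\to 0$ for $f_1\in L^1$. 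Integrating yields $\alpha\leq a_M/M$, and $M\to\infty$ gives $\alpha\leq C$.

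The main obstacle is showing $C\leq\beta$, since subadditivity only gives upper bounds on $f_n$ and the previous trick does not apply. I would use a stopping-time covering: fix $\epsilon>0$ and set $N(\omega)=\inf\{n\geq 1:f_n(\omega)\leq n(\beta+\epsilon)\}$, which is a.s.\ finite by definition of $\beta$. For a cap $M$ large enough that $\mathbb{P}(N>M)$ is small, greedily partition $[0,L)$ into ``good'' blocks $[t_k,t_k+N(\sigma^{t_k}\omega))$ when $N(\sigma^{t_k}\omega)\leq M$ and single ``bad'' steps otherwise, leaving a final remainder of length at most $M$. Subadditivity yields
$$
f_L\leq(\beta+\epsilon)G+\sum_{t\in\text{bad}}f_1\circ\sigma^t+f_{L-t_{\text{last}}}\circ\sigma^{t_{\text{last}}},
$$
where $G$ is the total good length and $G\leq L$. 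Dividing by $L$ and integrating, the bad-step density is at most $\mathbb{P}(N>M)$, so its contribution vanishes as $M\to\infty$ by dominated convergence (using $f_1^+\in L^1$); the remainder is $O(M/L)$. Letting $L\to\infty$, then $M\to\infty$, then $\epsilon\to 0$ yields $C\leq\beta$. Combined with $\beta\leq\alpha\leq C$, this forces $\alpha=\beta=C$ and thus $f_n/n\to C$ a.s. The delicate point is the bookkeeping in the covering (measurable selection of block starts, handling the sign of $\beta+\epsilon$ when bounding $(\beta+\epsilon)G\leq(\beta+\epsilon)L$, and the uniform control of the bad and remainder contributions).
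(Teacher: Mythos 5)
The paper does not prove this theorem: it is quoted from Kingman's 1968 paper with a citation and used as a black box, so your sketch is not competing with an in-paper argument. On its own terms it is the standard modern proof (essentially the stopping-time/covering argument of Steele and Katznelson--Weiss): Fekete's lemma for the existence of $C$; near-invariance $\bar f\le\bar f\circ\sigma$ plus ergodicity for the a.s.\ constancy of $\limsup$ and $\liminf$; the $n=qM+r$ Birkhoff iteration for $\alpha\le C$; and the greedy covering by blocks on which $f_N\le N(\beta+\epsilon)$ for $C\le\beta$. The structure is sound, and the delicate points you flag are real but routinely handled: for the sign issue write $(\beta+\epsilon)G=(\beta+\epsilon)L-(\beta+\epsilon)(L-G)$ and use that the expected density of $L-G$ is at most $\mathbb P(N>M)+M/L$. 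The one case your sketch genuinely does not cover is $\beta=-\infty$, where $N(\omega)=\inf\{n:f_n(\omega)\le n(\beta+\epsilon)\}$ is undefined; there you must truncate, taking $N_K=\inf\{n:f_n\le -nK\}$ and showing $C\le -K$ for every $K>0$. It is worth noting that the identical covering mechanism --- stopping times locating intervals of guaranteed slow growth, a Vitali-type disjoint selection, and control of the leftover set via $\int_B f_1^+$ under uniform integrability --- is exactly what the paper does deploy in its proof of Theorem~\ref{balancedkingman}, the balanced-interval refinement, so your argument is very much in the spirit of the surrounding text.
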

In the case where $\sigma$ is invertible, one easily obtains the following useful corollary:
\begin{cor}
Let $(\Omega,\sigma,\mathbb P)$ be an invertible ergodic system on a Lebesgue probability space and let $\{f_n\}_{n\in\mathbb N}\subset L^1\Omega$ be subadditive.
Then
$$
\lim_{n\rightarrow\infty}\tfrac1nf_n(\sigma^{-n}\omega)=\lim_{n\rightarrow\infty}\tfrac1nf_n(\omega)
$$
pointwise almost everywhere.
\end{cor}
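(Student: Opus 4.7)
The plan is to deduce the time-reversed limit by applying Kingman's Theorem \ref{ing} a second time, to the ``dual'' system $(\Omega,\tau,\mathbb P)$ with $\tau:=\sigma^{-1}$, and then match the constants.

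First, Theorem \ref{ing} applied directly to $\{f_n\}$ and $\sigma$ produces the constant $C=\lim_n\tfrac1n\int f_n\in[-\infty,\infty)$ together with $\tfrac1n f_n(\omega)\to C$ almost surely. Next, define $g_n(\omega):=f_n(\sigma^{-n}\omega)=f_n(\tau^n\omega)$. Since $\tau^n$ is measure preserving, $\int|g_n|=\int|f_n|<\infty$, so $g_n\in L^1$. The key step is to verify that $\{g_n\}$ is subadditive under $\tau$. Substituting $\omega\mapsto\sigma^{-(m+n)}\omega$ into the subadditivity relation for $\{f_n\}$ yields
\begin{align*}
g_{m+n}(\omega)=f_{m+n}(\sigma^{-(m+n)}\omega)&\leq f_m(\sigma^{-m}\omega)+f_n(\sigma^{-(m+n)}\omega)\\
&=g_m(\omega)+g_n(\tau^m\omega),
\end{align*}
which, after swapping the roles of $m$ and $n$, is exactly the subadditivity condition under $\tau$ used in Theorem \ref{ing}.

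Since $(\Omega,\tau,\mathbb P)$ is itself an invertible ergodic measure-preserving system, Theorem \ref{ing} applied to $\{g_n\}$ and $\tau$ yields a constant $C'=\lim_n\tfrac1n\int g_n$ with $\tfrac1n f_n(\sigma^{-n}\omega)=\tfrac1n g_n(\omega)\to C'$ almost surely. Measure preservation of $\sigma$ gives $\int g_n=\int f_n\circ\sigma^{-n}=\int f_n$, hence $C'=C$, finishing the argument.

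The corollary is essentially an index-shuffling exercise on top of Kingman's theorem; the only substantive point is the change-of-variable calculation verifying the subadditivity of $\{g_n\}$ under $\tau$ in the convention adopted in the paper, and the observation that ergodicity of $\sigma$ is inherited by $\tau$ because $\sigma$ is invertible.
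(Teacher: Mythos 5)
Your proposal is correct and follows the same route as the paper: define $g_n=f_n\circ\sigma^{-n}$, verify that $\{g_n\}$ is subadditive with respect to $\tau=\sigma^{-1}$, and apply Kingman's theorem to the reversed (still ergodic, by invertibility) system. The only difference is in matching the two limits: you read off $C'=\lim_n\tfrac1n\int g_n=\lim_n\tfrac1n\int f_n=C$ directly from the integral formula in Theorem \ref{ing}, which is a cleaner finish than the paper's argument intersecting two sets each of measure greater than $\tfrac12$.
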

\begin{proof}
Simply set $g_n=f_n\circ\sigma^{-n}$.
Then $g_n$ is subadditive with respect to $\sigma^{-1}$:
\begin{align*}
g_{m+n}(\omega)=f_{m+n}(\sigma^{-(m+n)}\omega)\leq&f_m(\sigma^{-(m+n)}\omega)+f_n(\sigma^{m-(m+n)}\omega)\\
=& g_m(\sigma^{-n}\omega)+g_n(\omega),
\end{align*}
so that certainly there exists some $L$ such that $\tfrac1nf_n\circ\sigma^{-n}\rightarrow L$.
It remains to check that this limit and $\tfrac1nf_n(\omega)\rightarrow L'$ coincide.
Let $\epsilon>0$.
Then there exists an $N$ such that
$$
\mathbb P(\omega:\tfrac1nf_n(\omega)\in(L'-\epsilon,L'+\epsilon)>\tfrac12
$$
and
$$
\mathbb P(\omega:\tfrac1ng_n(\omega)=\tfrac1nf_n(\sigma^{-n}\omega)\in(L-\epsilon,L+\epsilon))>\tfrac12,
$$
so that the set 
$$
\{\omega:\tfrac1nf_n(\sigma^n\omega)\in(L-\epsilon,L+\epsilon)\}\cap\{\omega:\tfrac1nf_n(\omega)\in(L'-\epsilon,L'+\epsilon)\}
$$
has positive measure, whence $|L-L'|<2\epsilon$.
$\epsilon$ was arbitrary though, so that $L=L'$.
\end{proof}
Kingman's original result may be modified in a few directions.
One may very rapidly obtain the following, which may be found in \cite{ferrando1995moving}:
\begin{lem}\label{easyking}
Let $\{f_n\}_{n\in\mathbb N}\subseteq L^1\Omega$ be a subadditive family of functions.
Then
$$
\tfrac1nf_{n\rightarrow2n}\rightarrow C=\lim_{n\rightarrow\infty}\tfrac1n\int f_n.
$$
\end{lem}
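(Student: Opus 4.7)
The plan is to pin down $\tfrac1n f_{n\to 2n}(\omega)$ from below and above by $C$ almost surely; the lower bound will be immediate from subadditivity and Kingman, whereas the upper bound will require an appeal to a moving-average refinement.

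For the lower bound, I would first note that subadditivity gives $f_{2n}(\omega)\leq f_n(\omega)+f_n(\sigma^n\omega)=f_n(\omega)+f_{n\to 2n}(\omega)$, and hence $f_{n\to 2n}\geq f_{2n}-f_n$. Dividing by $n$ and applying Theorem \ref{ing} along both indices yields $\tfrac2{2n} f_{2n}\to 2C$ and $\tfrac1n f_n\to C$ almost everywhere, whence $\liminf_{n\to\infty}\tfrac1n f_{n\to 2n}(\omega)\geq 2C-C=C$ a.e.

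The matching upper bound is the substantive part. A natural first attempt is to observe that Kingman also delivers $\tfrac1n f_n\to C$ in $L^1$, and by stationarity $\int|\tfrac1n f_{n\to 2n}-C|=\int|\tfrac1n f_n-C|\to 0$; but this only yields $L^1$ (hence in-probability) convergence, which is insufficient for the sought-for a.e.\ statement, since in general one can have a subsequence along which $\tfrac1n f_{n\to 2n}$ exceeds $C+\epsilon$ infinitely often even as the mass of the exceptional set tends to zero. The required bound $\limsup_n\tfrac1n f_{n\to 2n}(\omega)\leq C$ a.e.\ is precisely the content of the moving-average subadditive ergodic theorem of Ferrando \cite{ferrando1995moving}, which I would invoke with $a_n=n$, $b_n=2n$; these satisfy the relevant growth hypotheses $b_n-a_n\to\infty$ with $a_n/(b_n-a_n)$ bounded. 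This a.e.\ upper bound is the main obstacle, and the argument in \cite{ferrando1995moving} proceeds via chaining together maximal-ergodic estimates rather than being deducible from the $L^1$ consequence of Kingman alone.

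Combining the two bounds yields $\tfrac1n f_{n\to 2n}\to C$ a.e., and the identification $C=\lim_n\tfrac1n\int f_n$ follows from Theorem \ref{ing} applied to the subadditive numerical sequence $\bigl(\int f_n\bigr)$, subadditivity of which is immediate from measure preservation.
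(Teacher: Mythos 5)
Your proposal is correct, and the lower bound is exactly the paper's: $f_{n\to 2n}\geq f_{0\to 2n}-f_{0\to n}$, divide by $n$, apply Kingman twice. Where you diverge is the upper bound. You correctly identify that $L^1$-convergence of $\tfrac1n f_{n\to2n}$ (which follows from stationarity) does not give the a.e.\ statement, but you then outsource the entire substantive half to Ferrando's moving-average theorem. That is a legitimate route --- the window $[n,2n)$ satisfies the cone-type hypotheses of that theorem, and the paper itself cites \cite{ferrando1995moving} as a source for the statement --- but the paper instead gives a short self-contained argument that you should be aware of, since it shows the a.e.\ upper bound for this \emph{particular} window needs no maximal-inequality machinery. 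Fix $N$ with $\tfrac1N\int f_N\leq C+\epsilon$, tile $[n,2n)$ by blocks of length $N$ with every possible offset $j\in[0,N)$, apply subadditivity, and average over $j$; this yields
$$
\tfrac1n f_{n\rightarrow 2n}\leq\tfrac1n\big(S_N|f_1|\circ\sigma^n+S_N|f_1|\circ\sigma^{2n-N}\big)+\tfrac1{nN}S_nf_N\circ\sigma^n.
$$
The head and tail terms vanish a.e.\ because $\tfrac1n|f_1|\circ\sigma^n\to0$ for $f_1\in L^1$, and the middle term is $\tfrac1N\big(\tfrac1n S_{2n}f_N-\tfrac1n S_nf_N\big)\to\tfrac1N\int f_N\leq C+\epsilon$ a.e.\ by Birkhoff --- the point being that a Birkhoff average over the shifted window $[n,2n)$ is a difference of two ordinary Birkhoff averages at comparable scales, which is precisely what fails for general moving windows and why the full Ferrando theorem is harder. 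So your assessment that the upper bound is ``not deducible from Kingman alone'' and requires chained maximal estimates is overstated for this lemma: the standard Kingman blocking trick plus the ordinary ergodic theorem suffices. What your route buys is brevity and generality (arbitrary admissible windows $[a_n,b_n)$); what the paper's route buys is a self-contained two-paragraph proof that keeps the treatment elementary, which matters since the subsequent Theorem \ref{balancedkingman} genuinely does require the harder Vitali-type covering argument.
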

\begin{proof}
The lower bound is immediately clear, since
\begin{align*}
\tfrac1nf_{n\rightarrow2n}(\omega)\geq\tfrac1n(f_{0\rightarrow2n}-f_{0\rightarrow n})\rightarrow 2C-C=C\text{ as }n\rightarrow\infty.
\end{align*}
On the other hand, let $\epsilon>0$, then since $\tfrac1n\int f_n\rightarrow C$ there exists some $N$ such that for all $n\geq N$, $\tfrac1n\int f_n\leq C+\epsilon$.
Given some fixed $n\geq N$ and some $j<N$, set $j=\lfloor\tfrac nN\rfloor$ and break up the interval $[0,n)=[n,n+j)\sqcup[n+j,n+j+N),\cdots\sqcup[n+j+(t-1)N,n+j+\lfloor\tfrac nN\rfloor N)\sqcup[n+j+\lfloor\tfrac nN\rfloor N,2n)$.
Applying subadditivity in this manner
\begin{align*}
f_{n\rightarrow 2n}\leq&f_{n\rightarrow n+j}+\sum_{i=0}^t f_{n+j+iN\rightarrow n+j+(i+1)N}(\omega)+f_{n+j+tN\rightarrow n}\\
\leq& S_N|f_1|\circ\sigma^n(\omega)+\sum_{i=0}^tf_N\circ\sigma^{n+j+iN}(\omega)+S_N|f_1|\circ\sigma^{2n-N}(\omega).
\end{align*}
The above is valid for all $j\in[0,N)$, so averaging and dividing by $nN$ we obtain
\begin{align*}
\tfrac1nf_{n\rightarrow 2n}\leq&\tfrac1n(S_N|f_1|\circ\sigma^n+S_N|f_1|\circ\sigma^{2n-N})+\tfrac1{nN}S_nf_N\circ\sigma^n\\
\leq&2\epsilon+\tfrac 1{Nn}n(N(C+\epsilon))
\end{align*}
for sufficiently large $n$.
Since $\epsilon$ was arbitrary, the result is proven.
\end{proof}
A similar result for $f_{-n\rightarrow n}$ will be crucial to the main result.
The proof here uses the following lemma, which is a simplified version of result 3.9 in \cite{rudolph1990fundamentals}:
\begin{lem}[Backward Vitali]\label{bvit}
Let $(\Omega,\mathbb P,\sigma)$ be an invertible ergodic system on a Lebesgue probability space.
Suppose that there is a sequence of integer valued functions
$j_k\in\mathcal M(\Omega\rightarrow\mathbb N_0)$
such that $j_k(\omega)\rightarrow\infty$ as $k\rightarrow\infty$.
Then there is a measurable $A\subseteq\Omega$ and a measurable $j\in\mathcal M(A\rightarrow\mathbb N_0)$ such that writing $I_\omega=\{\sigma^i\omega:i\in\{-j_\omega,\cdots,j_\omega\}\}$,
the $I_\omega$ are disjoint and
$$
\mathbb P(\bigcup_{\omega\in A}I_\omega)>1-\epsilon.
$$
\end{lem}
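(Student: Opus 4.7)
The plan is to build $A$ and $j$ via a Rokhlin-tower construction that is synchronized with the level sets of one of the $j_k$. Fix $\epsilon > 0$.

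The first step is a lower-bound preparation. Pointwise convergence $j_k(\omega) \to \infty$ implies that for each $N$, $\mathbb{P}(\{\inf_{k \geq K}j_k \geq N\}) \uparrow 1$ as $K \to \infty$; in particular $\mathbb{P}(\{j_K \geq N\}) \to 1$. Choose a large $N$ (ultimately the constant value I shall take for $j$) and pick $K$ so that $\mathbb{P}(\{j_K < N\}) \leq \epsilon/(6(2N+1))$, and set $\Omega_0 := \{j_K \geq N\}$. By $\sigma$-invariance, the set $\Omega_1 := \bigcap_{i=-N}^{N}\sigma^{-i}\Omega_0$, on which every symmetric $N$-neighbourhood is contained in $\Omega_0$, has $\mathbb{P}(\Omega_1^c) \leq (2N+1)\mathbb{P}(\Omega_0^c) \leq \epsilon/6$.

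The second step erects the tower. Apply Rokhlin's lemma at height $2N+1$ with error $\epsilon/6$ to obtain a base $B_0$ whose levels $\sigma^iB_0$, $0 \leq i \leq 2N$, are disjoint and cover at least $1 - \epsilon/6$ of $\Omega$. The average of $\mathbb{P}(\sigma^iB_0 \cap \Omega_1^c)$ over $i$ is bounded by $\mathbb{P}(\Omega_1^c)/(2N+1) \leq \epsilon/(6(2N+1))$, so a pigeonhole picks some level $i^*$ with $\mathbb{P}(\sigma^{i^*}B_0 \cap \Omega_1^c) \leq \epsilon/(6(2N+1))$. Replacing $B_0$ by $B := \sigma^{i^*}B_0$ (still a Rokhlin base for a relabelled tower), define $A := \sigma^N(B \cap \Omega_1)$ and $j \equiv N$ on $A$. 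Each $I_\omega$ for $\omega \in A$ is precisely a column of the shifted Rokhlin tower over $B \cap \Omega_1$, so the $I_\omega$ are automatically disjoint. Summing over disjoint tower levels gives
$$
\mathbb{P}\bigl(\bigcup_{\omega \in A}I_\omega\bigr) = (2N+1)\mathbb{P}(B \cap \Omega_1) \geq (2N+1)\mathbb{P}(B) - \epsilon/6 \geq (1 - \epsilon/6) - \epsilon/6 > 1 - \epsilon,
$$
and measurability of $A$ and $j$ is automatic from that of $B$, $\Omega_0$, and $\Omega_1$.

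The main obstacle is preventing the Rokhlin base from concentrating in the exceptional set $\Omega_1^c$ — if it did, the tower would cover most of $\Omega$ while $A$ itself remained tiny, and the synchronized symmetric intervals $I_\omega$ would fail to cover the claimed measure. The pigeonhole-shifting trick above dissolves this issue by exploiting the $\sigma$-translation symmetry between the $2N+1$ levels of the tower.
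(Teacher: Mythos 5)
There is a genuine gap here, and it is conceptual rather than merely technical: your construction returns the constant radius $j\equiv N$, which need not be one of the values $j_k(\omega)$. The statement is written loosely, but the only reading under which the lemma has content --- and the only reading under which it can be used in the proof of Theorem~\ref{balancedkingman} --- includes the conclusion that $j(\omega)\in\{j_k(\omega):k\in\mathbb N\}$ for each $\omega\in A$; the paper invokes exactly this (``every $j(\omega)=j_i(\omega)$ for some $i$'') to conclude $f_{-j(\omega)\rightarrow j(\omega)}(\omega)\leq 2j(\omega)(\ubar C+\epsilon)$ on $A$. Your use of the hypothesis is correspondingly vacuous: you only invoke the $j_k$ to arrange $j_K\geq N$ on most of $\Omega$, and knowing $j_K(\omega)\geq N$ does not make $N$ an admissible radius at $\omega$. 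Indeed, if the constant-radius version were the intended statement, the hypothesis on the $j_k$ could be deleted and the ``lemma'' would follow from Rokhlin's lemma alone by taking $A$ to be the middle level of an odd-height tower. What is actually required is a Vitali-type selection: from the point-dependent family of admissible symmetric windows $\{-j_k(\omega),\dots,j_k(\omega)\}$ one must extract a disjoint subfamily of orbit intervals covering measure $1-\epsilon$. (The paper itself gives no proof, deferring to result 3.9 of \cite{rudolph1990fundamentals}, which is precisely such a selection argument.)

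There is also a technical error in the tower manipulation. Rokhlin's lemma controls only the levels $B_0,\sigma B_0,\dots,\sigma^{2N}B_0$; after replacing $B_0$ by $B=\sigma^{i^*}B_0$, the top $i^*$ levels of your ``relabelled tower,'' namely $\sigma^{2N+1}B_0,\dots,\sigma^{2N+i^*}B_0$, lie outside the original tower and may intersect one another and the lower levels. Both the asserted disjointness of the $I_\omega$ and the identity $\mathbb P\big(\bigcup_{\omega\in A}I_\omega\big)=(2N+1)\,\mathbb P(B\cap\Omega_1)$ rest on this unjustified claim. This defect is repairable (for instance, build a tower of height $2(2N+1)$ and pigeonhole over windows of $2N+1$ consecutive levels inside it), but the repair would not address the first, more serious, issue: a correct proof must select $j(\omega)$ from the given values $j_k(\omega)$, which vary with $\omega$ and need not admit a common constant value on a set of large measure.
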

\begin{thm}[Kingman's theorem for balanced intervals]\label{balancedkingman} let $(\Omega,\sigma,\mathbb P)$ be an ergodic system with invertible base, and let $(f_n)_{n\in\mathbb N}$ be  subadditive sequence of measurable functions on $\Omega$.
Then
$$
\tfrac1{2n}f_{2n}(\sigma^{-n}\omega)\rightarrow C=\lim_{n\rightarrow\infty}\tfrac1n\int f_n
$$
pointwise almost everywhere.
\end{thm}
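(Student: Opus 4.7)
The strategy is to sandwich $\tfrac1{2n}f_{-n\rightarrow n}$ between an upper bound of $C$ coming from subadditivity combined with the just-proved corollary, and a lower bound of $C$ coming from a backward-Vitali tiling argument. Both $\limsup$ and $\liminf$ are $\sigma$-invariant (shifting by a fixed finite amount leaves both unchanged), so by ergodicity each is almost surely constant and it suffices to pin down those constants.

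For the upper bound, subadditivity at time zero gives $f_{-n\rightarrow n}(\omega)\leq f_{-n\rightarrow 0}(\omega)+f_{0\rightarrow n}(\omega)=f_n(\sigma^{-n}\omega)+f_n(\omega)$. Dividing by $2n$ and applying Kingman's theorem \ref{ing} to $\tfrac1n f_n(\omega)$ and its corollary to $\tfrac1n f_n(\sigma^{-n}\omega)$ immediately yields $\limsup_n\tfrac1{2n}f_{-n\rightarrow n}\leq C$ almost everywhere.

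For the lower bound, suppose toward contradiction that $\liminf_n\tfrac1{2n}f_{-n\rightarrow n}<C-\epsilon$ on a set $F$ of positive measure for some $\epsilon>0$. Define $j_k(\omega)$ on $F$ as the least $n\geq k$ witnessing $\tfrac1{2n}f_{-n\rightarrow n}(\omega)<C-\epsilon$, and any divergent sequence off $F$; then $j_k\rightarrow\infty$ almost surely. Apply Lemma \ref{bvit} to obtain $A\subseteq\Omega$ together with a measurable $j:A\rightarrow\mathbb N$ (drawn from the $j_k(\omega)$'s) such that the balanced orbit intervals $I_\omega=\{\sigma^i\omega:i\in[-j_\omega,j_\omega]\}$ are disjoint and their union $B$ has measure at least $1-\delta$. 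By construction each $\omega\in A\cap F$ satisfies $f_{-j_\omega\rightarrow j_\omega}(\omega)<(C-\epsilon)(2j_\omega)$.

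Since each $I_{\omega'}$ intersects the orbit of a generic $\omega$ in a contiguous block of indices, for large $N$ the index set $[-N,N]$ partitions into interior Vitali blocks (each pointwise-bounded via the displayed inequality above), at most two partial Vitali blocks at the boundary, and indices whose images under $\sigma^i$ lie outside $B$. Subadditivity of $f$ along this partition, bounding boundary and uncovered indices by single-step growth using $|f_1|$ while controlling uncovered-density through the Birkhoff ergodic theorem applied to $\mathbf{1}_{B^c}$, yields a bound of the form $\tfrac1{2N}E[f_{-N\rightarrow N}]\leq(C-\epsilon)(1-\delta)+\delta E[|f_1|]+o(1)$. Letting $N\rightarrow\infty$ then $\delta\rightarrow 0$ contradicts Kingman's mean convergence $\tfrac1{2N}\int f_{2N}\rightarrow C$. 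The chief obstacle is this transfer step: the Vitali lemma produces a covering at the level of $\Omega$, whereas subadditivity is used at the level of a single orbit window. Accounting for the boundary pieces (whose total length could in principle be large, but is $L^1$-integrable via the construction of $A$) and verifying measurability of the orbit-level decomposition is where essentially all of the technical work lies.
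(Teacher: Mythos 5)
Your proposal is correct and follows essentially the same route as the paper: the upper bound via $f_{-n\rightarrow n}\leq f_{-n\rightarrow 0}+f_{0\rightarrow n}$ together with Kingman's theorem and its corollary, and the lower bound via the backward Vitali lemma applied to the (infinitely many) times $j$ where $f_{-j\rightarrow j}$ is close to the liminf, followed by subadditivity along a decomposed orbit window and integration. The ``transfer step'' you correctly flag as the main obstacle is exactly what the paper resolves with its stopping times $T_s,T_e$ and the recursively defined $a_i,b_i$, which measurably cut the window into guaranteed-slow-growth blocks, small gaps controlled by $1_B f_1$, and a low-probability boundary event $\Lambda$ handled by uniform integrability of $f_1$.
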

\begin{proof}
Kingman's theorem immediately yields an upper bound: since
$$
\tfrac1{2n}f_{2n}(\sigma^{-n}\omega)\leq\tfrac1{2n}\big(f_n(\sigma^{-n}\omega)+f_n(\omega)\big)\rightarrow \tfrac12C+\tfrac12C
$$
it is clear that
$$
\bar C_\omega=\limsup_{n\rightarrow\infty}\tfrac1{2n}f_{2n}(\sigma^{-n}\omega)\leq C.
$$
If $C=-\infty$ then there is nothing more to show.
Otherwise,
\begin{align*}
\ubar C_{\sigma^{-1}\omega}=&\liminf_{n\rightarrow\infty}\tfrac1{2n}f_{2n}\circ\sigma^{-n}(\sigma^{-1}\omega)\\
\geq&\liminf_{n\rightarrow\infty}(\tfrac1{2n}(f_{2(n+1)}\circ\sigma^{-(n+1)}(\omega)-f_2\circ\sigma^{n-1}(\omega))\\
\geq&\liminf\tfrac1{2n}f_{2n}\circ\sigma^{-n}(\omega)-\limsup f_2\circ\sigma^{n-1}(\omega)\\
=&\ubar C_\omega-0,
\end{align*}
whence $\ubar C\leq\bar C$ is constant.
Fix $\epsilon>0$.
It is sufficient to show that $C-\epsilon\leq\Theta(\ubar C,\epsilon)$ where $\Theta(\ubar C,\epsilon)\rightarrow\ubar C$ as $\epsilon\rightarrow0$.
The set of $j$ such that $f_{-j\rightarrow j}$ is small is infinite, so denote the ordered elements
$$
\{j:f_{-j\rightarrow j}(\omega)<2j(\ubar C+\epsilon)\}=\{j_k(\omega):k\in\mathbb N\}.
$$
Since $f$ is integrable, it is uniformly integrable and so there exists a $\delta>0$ such that if $\mathbb P(A)<\delta$ then $\int_A f<\epsilon$.
Apply Backward Vitali to the $j_i$s and the $-j_i$s with parameter $\epsilon':=\min\{\epsilon,\delta\}$ to obtain an $A\subseteq\Omega$ and a $j:A\rightarrow\mathbb N$ according to theorem \ref{bvit} such that writing $B=\Omega\setminus\bigcup_{\omega\in A}I_\omega$, we have $\mathbb P(B)<\delta$ and so
$$
\int_{\Omega\setminus\bigcup_{\omega\in A}I_\omega}f_1^+<\epsilon.
$$
While every $j(\omega)=j_i(\omega)$, for some $i$ so that for every $\omega\in A$,
$$
f_{-j(\omega)\rightarrow j(\omega)}(\omega)\leq2j(\ubar C+\epsilon).
$$
Write $\Lambda_{\pm}=\{\sigma^{\pm j_i(\omega)}(\omega):\omega\in A\}$.
In addition we may define measurable maps $T_\pm\in\mathcal M(\Omega\rightarrow\mathbb N)$ by
\begin{gather*}
T_s(\omega)=\inf\{t\in\mathbb N_0:\sigma^t\omega\in\Lambda_-\},\\
T_e(\omega)=\inf\{t>T_s(\omega)\in\mathbb N_0:\sigma^{t}\omega\in\Lambda_+\}.
\end{gather*}
$T_s$ is the earliest nonnegative time such that $\sigma^{T_s(\omega)}\omega$ is the start of a period at growth rate guaranteed close to $\ubar C$, $T_e$ by contrast seeks the first time in the past which was the end of such an interval.
Each are almost surely finite since $\mathbb P(\Lambda_\pm)=\mathbb P(A)>0$.

The idea of the next step is that $[0,n)$ may be measurably broken up into intervals of slow growth and with small gaps inbetween.
Set $b_0=0$ and recursively iterate along the orbit for $i>0$: \begin{gather*}
a_i(\omega)=T_s(\sigma^{b_{i-1}(\omega)}\omega),\\
b_i(\omega)=T_e(\sigma^{b_{i-1}(\omega)}\omega).
\end{gather*}
For any $m\in\mathbb N_0$ we may set $t_m(\omega)=\max\{t:b_t(\omega)\leq m\}$.
Since $j$ and $T_\pm$ are measurable and the $t_m(\omega)\rightarrow\infty$ as $m\rightarrow\infty$, it is possible to pick an $N$ such that for all $n\geq N$, $\mathbb P(j\geq N)<\tfrac14\epsilon'$, $\mathbb P(t_n=0)<\tfrac14\epsilon'$ and $\mathbb P(T_\pm\geq N)<\tfrac14\epsilon'$.
Let $n\geq N$.
The orbit of $\omega$ is considered over the following intervals of times:
\begin{align*}
[0,n]=&[0,a_1(\omega))\cup[a_1(\omega),b_1(\omega))\cup\\
&[b_1(\omega),a_2(\omega))\cup[a_2(\omega),b_2(\omega)]\cup\\
&\cdots\\
&[b_{t_n(\omega)-1}(\omega),a_{t_n(\omega)}(\omega))\cup[a_{t_n(\omega)}(\omega),b_{t_n(\omega)}(\omega))\cup\\
&[b_{t_n(\omega)}(\omega),n).
\end{align*}
Time $a_i\rightarrow b_i$ is guaranteed to have low growth:
$$
f_{a_i(\omega)\rightarrow b_i(\omega)}(\omega)\leq(b_i(\omega)-a_i(\omega))(\ubar C_\omega+\epsilon),
$$
and in addition there is a good chance that the gaps between the $a_i$s and the $b_i$s won't be too large:
$$
\mathbb P(a_1(\omega)\geq N)=\mathbb P(T(\omega)\geq N)<\tfrac14\epsilon'.
$$
Since $T_-$ seeks the endpoints of the periods of guaranteed growth, it holds that either $T_-(\sigma^n\omega)=n-b_{t_n(\omega)}(\omega)$ or $t_n(\omega)=0$.
Therefore,
\begin{align*}
\mathbb P(n-b_{t_n(\omega)}(\omega)\geq N)\leq&\mathbb P(T_-(\sigma^n\omega)\geq N)+\mathbb P(t_n(\omega)=0)\\
<&\mathbb P(T_-\geq N)+\tfrac14\epsilon'<\tfrac12\epsilon'.
\end{align*}
Set
$$
\Lambda=\{a_1\geq N\text{ or }n-b_{t_n}(\omega)\geq N\},
$$
so that it is immediate from above that $\mathbb P(\Lambda)<\epsilon'$.
Given some measurable $g$ write $S_pg=\sum_{i=0}^{p-1}g\circ\sigma^i$.
We then repeatedly apply subadditivity to $f_{0\rightarrow n}$:
\begin{align*}
f_n(\omega)=&f_{0\rightarrow n}(\omega)\\
\leq&(f_{0\rightarrow a_1(\omega)}+f_{a_1(\omega)\rightarrow b_1(\omega)}+\cdots+f_{a_t(\omega)\rightarrow b_{t_{n\omega}}(\omega)}+f_{b_{t_{n\omega}}(\omega)\rightarrow n})(\omega)\\
\leq&\sum_{i=1}^{t_n(\omega)}f_{a_i(\omega)\rightarrow b_i(\omega)}(\omega)+\sum_{i=1}^{t_n(\omega)}f_{b_{i-1}(\omega)\rightarrow a_i(\omega)}(\omega)+f_{b_{t_n(\omega)}\rightarrow n}(\omega)\\
\leq&\sum_{i=1}^{t_n(\omega)}(b_i(\omega)-a_i(\omega))(\ubar C+\epsilon)+\\
&\sum_{i=a_1(\omega)}^{b_{t_n(\omega)}(\omega)}1_B(\sigma^i\omega)f_1\circ\sigma^i+f_{b_{t_n(\omega)}(\omega)\rightarrow n}(\omega).
\end{align*}
In the case where $t_n(\omega)=0$ the first two terms are empty sums.
Outside $\Lambda$ it is guaranteed that $[0,a_1(\omega))\subseteq[0,M)$ and $[b_{t_n(\omega)}(\omega),n)\subseteq [n-M,n)$.
Therefore
\begin{align*}
f_n(\omega)\leq&\sum_{i=0}^{n-1}(\ubar C+\epsilon)1_{B^c}(\sigma^i\omega)+\sum_{i=0}^{n-1}1_B(\sigma^i\omega)f_1(\sigma^i\omega)+\\
&\sum_{i=0}^M(|f_1|(\sigma^i\omega)+|\ubar C|+\epsilon)+\sum_{i=n-M}^{n-1}(|f_1|(\sigma^i\omega)+|\ubar C|+\epsilon)+\\
&1_\Lambda(\omega)S_n|f_1|(\omega)\\
=&(\ubar C+\epsilon)S_n1_{B^c}(\omega)+S_n(f_11_B)(\omega)+\\
&S_M|f_1|(\omega)+S_M|f_1\circ\sigma^{n-M}|(\omega)+1_\Lambda(\omega)S_n|f_1|(\omega).
\end{align*}
It remains to check that each of these terms are small enough to yield the result when we divide through by $n$ and integrate:
\begin{align*}
\tfrac1n\int(\ubar C+\epsilon)S_n1_{B^c}(\omega)+S_n(1_Bf_1)(\omega)=&\int_\Omega\big((\ubar C+\epsilon)1_{B^c}+f_11_B\big)\\
=&(\ubar C+\epsilon)\mathbb P(B^c)+\int_B|f_1|\\
\leq&\max\{(\ubar C+\epsilon)(1-\epsilon),\ubar C+\epsilon\}+\epsilon\\
\leq&(\ubar C+\epsilon)+\epsilon(|\ubar C|+\epsilon)+\epsilon,
\end{align*}
and
\begin{align*}
&\tfrac1n\int\big(S_M|f_1|(\omega)+S_M|f_1\circ\sigma^{n-M}|(\omega)+1_\Lambda(\omega)S_n|f_1|(\omega)\big)\\
=&\tfrac1n\int\big(M|f_1|+M|f_1|\big)+\tfrac1n\int_\Lambda S_n|f_1|\\
\leq&\frac{2M\epsilon}n+\tfrac1n\sum_{i=0}^{n-1}\int_{\sigma^i\Lambda}|f_1|\\
\leq&\frac{2M\epsilon}n+\epsilon.
\end{align*}
Putting these together we obtain
\begin{align*}
(C-\epsilon)\leq\tfrac1n\int_\Omega f_n\leq&(\ubar C+\epsilon)+\epsilon(|\ubar C|+\epsilon)+\epsilon+\frac{2M\epsilon}n+\epsilon\\
\rightarrow&\ubar C+(3+|\ubar C|)\epsilon\text{ as }n\rightarrow\infty.
\end{align*}

Letting $\epsilon\rightarrow0$ we obtain $C\leq\ubar C$ as required.
\end{proof}
Linearity of the cocycles yields subadditivity of these families, which allow us to apply Kingman.
\begin{defn}
Given a random linear dynamical system $\mathcal R=(\Omega,\sigma,X,\mathcal L)$, write
$$
\lambda_\omega(x)=\limsup_{n}\tfrac1n\log\Vert\mathcal L^{(n)}_\omega x\Vert\in[-\infty,\infty).
$$
Write $m_i$ for the multiplicity of $\lambda_i$, ie, $m_i$ is defined as the largest $m\in\mathbb N$ such that
$$
\mu_{m_1+\cdots+m_{i-1}+m}=\lambda_i.
$$
\end{defn}
\begin{lem}
The quantities $\lambda_i,\mu_i$ and $\nu$ are almost everywhere constants.
Further, $\lambda_2$ exists if and only if $\lambda_1>\nu$.
\end{lem}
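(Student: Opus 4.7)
The plan is to establish that $\mu_k$ is a.s.\ constant by applying Kingman's theorem to the subadditive family $-\log\rho_k(\calL[n][\omega])$, then deduce constancy of $\nu$ and each $\lambda_i$ as simple corollaries, and finally read off the equivalence for $\lambda_2$ directly from the recursive definition.

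For $\mu_k$, the supermultiplicativity bound $\rho_k(S)\rho_k(T)\leq\rho_k(S\circ T)$ of Lemma \ref{growth_ineq}, applied to $T=\calL[n][\omega]$ and $S=\calL[m][\sigma^n\omega]$, shows that $f_n(\omega):=-\log\rho_k(\calL[n][\omega])$ satisfies
\[
f_{m+n}(\omega)\leq f_m(\sigma^n\omega)+f_n(\omega).
\]
The bound $\rho_k(T)\leq\|T\|$ gives $(\log\rho_k(\mathcal L_\omega))^+\leq\log^+\|\mathcal L_\omega\|\in L^1$, so Theorem \ref{ing} delivers a.s.\ convergence of $\tfrac{1}{n}f_n$ to an a.s.\ constant in $(-\infty,\infty]$ by ergodicity, whence $\mu_k$ is an a.s.\ constant. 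In the degenerate case where $\rho_k(\mathcal L_\omega)=0$ on a positive-measure set, the symmetric bound $\rho_k(S\circ T)\leq\|T\|\rho_k(S)$ (which follows from $g_{S\circ T}(V)\leq\|T|_V\|g_S(TV)$ in Lemma \ref{growth_ineq}) forces $\rho_k(\calL[n][\omega])=0$ eventually along a.e.\ orbit, so $\mu_k=-\infty$ constantly, consistent with the general statement.

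Constancy of $\nu=\lim_n\mu_n$ is immediate as a monotone limit of constants, and the constancy of each $\lambda_i$ follows by induction on $i$: $\lambda_1=\mu_1$ is constant, and once $\lambda_i$ is known to be constant, the set $\{t:\mu_t<\lambda_i\}$ is an a.s.\ constant subset of $\mathbb N$, so its infimum (when finite) is a constant and $\lambda_{i+1}$ is the corresponding $\mu_t$.

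For the second claim, $\lambda_2$ exists exactly when $\{t:\mu_t<\lambda_1\}$ is non-empty, i.e.\ when some $\mu_t<\mu_1$. Since $(\mu_n)$ is decreasing with limit $\nu$, this is equivalent to $\nu<\mu_1=\lambda_1$. The one genuinely delicate step is the integrability bookkeeping for Kingman in the main application above; everything else is formal manipulation of the definitions.
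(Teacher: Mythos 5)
Your route is the paper's route: produce the limits defining the $\mu_k$ by applying a Kingman-type theorem to the Bernstein numbers of the cocycle, get constancy from ergodicity and invariance, and read the $\lambda_2$ equivalence off the definitions. The second half of your argument (constancy of $\nu$ and of the $\lambda_i$ given constancy of the $\mu_k$, and ``$\lambda_2$ exists iff $\nu<\lambda_1$'' via monotonicity of $(\mu_n)$) is correct and matches the paper's ``trivial consequence of the definitions.'' You are also right, where the paper is not, that $\log\bern(\calL[n][\omega])$ is \emph{super}additive for $k\geq2$, so that one must negate (or invoke a superadditive version) before citing \cref{ing}.

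There is, however, a genuine gap in the Kingman step, and your own integrability check does not close it: \cref{ing} as stated requires $f_n\in L^1$, and the standard relaxation requires $f_1^+\in L^1$. For your subadditive family $f_n=-\log\bern(\calL[n][\omega])$ this means integrability of $\log^-\bern(\mathcal L_\omega)$, i.e.\ a \emph{lower} bound on $\bern(\mathcal L_\omega)$ in the mean. The bound you verify, $(\log\bern(\mathcal L_\omega))^+\leq\log^+\Vert\mathcal L_\omega\Vert$, controls $f_1^-$ --- exactly the wrong tail. Forward-integrability gives no control on how small $\bern(\mathcal L_\omega)$ can be while remaining positive, so your separate treatment of the case $\bern(\mathcal L_\omega)=0$ does not cover this; the superadditive mirror applied to $\log\bern(\calL[n][\omega])$ has the same defect, since it needs $\log^-\bern(\mathcal L_\omega)\in L^1$. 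To be fair, the paper's own one-line proof has the identical problem (and additionally mislabels the family as subadditive). A correct argument must do more: for instance, use \cref{growth_ineq} to get $\bern(\calL[m][\sigma^n\omega])\bern(\calL[n][\omega])\leq\bern(\calL[m+n][\omega])\leq\bern(\calL[n][\omega])\Vert\calL[m][\sigma^n\omega]\Vert$, conclude that $\limsup$ and $\liminf$ of $\tfrac1n\log\bern(\calL[n][\omega])$ are $\sigma$-invariant and hence constant, and then identify them by a Kingman-style covering argument adapted to one-sided integrability --- or replace $\rho_k$ by a genuinely submultiplicative surrogate with integrable positive part, as in \cite{gonzalez2015concise}.
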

\begin{proof}
The existence of the limits are guaranteed by applying Kingman to particular choices of subadditive sequences:
\begin{gather*}
f^k_n(\omega)=\log\bern(\calL[n][\omega]),\\
g_n(\omega)=\log\Vert\calL[n][\omega]\Vert_c.
\end{gather*}
The second statement is a trivial consequence of the definitions $\nu=\lim_{n\rightarrow\infty}\mu_n$ and $\lambda_2=\mu_{\inf\{t:\mu_t<\mu_1\}}$.
\end{proof}
\section{Decomposing a cocycle}
The tools obtained thus far are now used to decompose quasicompact cocycles.
\begin{lem}
Let $\mathcal R=(\Omega,\sigma,X,\mathcal L)$ be a strongly measurable random linear dynamical system with ergodic base on a separable Banach space.
Suppose that $\mathcal L$ satisfies the quasicompactness condition $\nu<\lambda_1$.
Then there is a unique measurable choice of fast space $E:\Omega\rightarrow\mathcal G_{m_1}X$ for which the following hold almost surely:
\begin{itemize}
\item equivariance: $\mathcal L_\omega E(\omega)=E(\sigma\omega)$
\item $\lambda(x)=\lambda_1$ for every $x\in E(\omega)\setminus\{0\}$
\item $\lim_{n\rightarrow\infty}\tfrac1n\log g(\calL[n][\omega],E(\omega))=\lambda_1.$
\end{itemize}
\end{lem}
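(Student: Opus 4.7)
The plan is to construct $E(\omega)$ as a limit of backward pushforwards of nearly optimal $m_1$-dimensional subspaces. First, Theorem~\ref{ing} and its corollary applied to the subadditive family $f^k_n(\omega)=\log\rho_k(\calL[n][\omega])$ yield $\tfrac{1}{n}\log\rho_k(\calL[n][\sigma^{-n}\omega])\to\mu_k$ almost surely; the hypothesis $\nu<\lambda_1$ together with $m_1$ being the multiplicity of $\lambda_1$ gives the strict gap $\mu_{m_1}=\lambda_1>\mu_{m_1+1}$. Birkhoff applied to the integrable $\log\Vert\mathcal L\Vert$ also gives tempering $\tfrac{1}{n}\log\Vert\mathcal L_{\sigma^{-n}\omega}\Vert\to 0$. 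Using separability of $\mathcal G_{m_1}X$ (Lemma~\ref{GXsep}) and Lemma~\ref{lexstruct}, I would measurably select $V^*_n(\omega)\in\mathcal G_{m_1}X$ with $g(\calL[n][\sigma^{-n}\omega],V^*_n(\omega))>\tfrac{1}{2}\rho_{m_1}(\calL[n][\sigma^{-n}\omega])$ and set $E_n(\omega):=\calL[n][\sigma^{-n}\omega]V^*_n(\omega)$, measurable by the second part of Lemma~\ref{GXsep}.

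The core convergence step is a Cauchy argument via Lemma~\ref{pushclose}. Writing $T_n:=\calL[n][\sigma^{-n}\omega]$, both $E_n$ and $E_{n+p}$ are $T_n$-images: of $V^*_n(\omega)$ and $W:=\calL[p][\sigma^{-(n+p)}\omega]V^*_{n+p}(\omega)$ respectively. The first is fast under $T_n$ by construction; for the second, Lemma~\ref{growth_ineq} gives $g(T_n,W)\geq g(\calL[n+p][\sigma^{-(n+p)}\omega],V^*_{n+p}(\omega))/\Vert\calL[p][\sigma^{-(n+p)}\omega]|_{V^*_{n+p}(\omega)}\Vert$, with numerator approximately $\rho_{m_1}(\calL[n+p][\sigma^{-(n+p)}\omega])\sim e^{(n+p)\lambda_1}$ and denominator growing at most sub-exponentially in $n$ by tempering. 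Choosing a threshold $\Theta_n$ strictly between $\rho_{m_1+1}(T_n)$ and these growth quantities (possible thanks to the gap), Lemma~\ref{pushclose} gives $d(E_n,E_{n+p})\leq 4\rho_{m_1+1}(T_n)/\Theta_n$, which decays exponentially. Completeness of $\mathcal G_{m_1}X$ (Lemma~\ref{GXsep}) then supplies a measurable pointwise limit $E(\omega):=\lim_n E_n(\omega)$.

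Equivariance follows from a parallel application of Lemma~\ref{pushclose}: $\mathcal L_\omega E_n(\omega)=\calL[n+1][\sigma^{-n}\omega]V^*_n(\omega)$ and $E_{n+1}(\sigma\omega)=\calL[n+1][\sigma^{-n}\omega]V^*_{n+1}(\sigma\omega)$ are images under $\calL[n+1][\sigma^{-n}\omega]$ of fast subspaces, so their distance vanishes in the limit and $\mathcal L_\omega E(\omega)=E(\sigma\omega)$. The upper bound $\limsup\tfrac{1}{n}\log\Vert\calL[n][\omega]|_{E(\omega)}\Vert\leq\lambda_1$ follows from Kingman applied to $\log\Vert\calL[n][\omega]\Vert$. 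Equivariance also makes $\calL[n][\omega]|_{E(\omega)}$ a bijection onto $E(\sigma^n\omega)$, so $-\log g(\calL[n][\omega],E(\omega))$ is a subadditive family, and Kingman gives a constant limit $L\leq\lambda_1$.

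The main obstacle is the matching lower bound $L\geq\lambda_1$, which requires transferring backward information into forward growth. My approach: any $y\in\mathbb S_{E(\omega)}$ is approximated by $y_m=T_m v_m\in E_m(\omega)$ with $v_m\in V^*_m(\omega)$, and the near-$\rho_{m_1}$-optimality of $V^*_m$ combined with $\rho_{m_1}(T_m)\sim\Vert T_m\Vert\sim e^{m\lambda_1}$ forces $\Vert v_m\Vert\sim e^{-m\lambda_1}$ up to sub-exponential factors. Then $\Vert\calL[n][\omega]y_m\Vert=\Vert\calL[n+m][\sigma^{-m}\omega]v_m\Vert\geq g(\calL[n+m][\sigma^{-m}\omega],V^*_m(\omega))\Vert v_m\Vert$, and a bookkeeping argument controlling $g(\calL[n+m][\sigma^{-m}\omega],V^*_m(\omega))$ from below — using the balanced Kingman result (Theorem~\ref{balancedkingman}) together with a further Lemma~\ref{pushclose} contraction on the composition — transfers the $(n+m)\lambda_1$-growth of the long composition into the desired $n\lambda_1$-growth of $\calL[n][\omega]y$. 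Uniqueness then follows because any other equivariant $E'(\omega)\in\mathcal G_{m_1}X$ with uniform $\lambda_1$-growth must have $E'(\sigma^{-n}\omega)$ near-$\rho_{m_1}$-optimal for $T_n$; Lemma~\ref{pushclose} places $E'(\omega)$ arbitrarily close to $E_n(\omega)$, forcing $E'=E$.
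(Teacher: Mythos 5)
Your overall architecture --- measurable near-optimal selection via Lemmas \ref{GXsep} and \ref{lexstruct}, pushforward, a Cauchy argument through the contraction Lemma \ref{pushclose}, completeness of $\mathcal G_{m_1}X$, and equivariance/uniqueness by the same contraction --- matches the paper's. The Cauchy step, equivariance, and the upper bound are fine. The genuine gap is in the forward-growth lower bound, and it traces back to your choice of optimizers: you select $V^*_m(\omega)$ to nearly maximize $g(\mathcal L^{(m)}_{\sigma^{-m}\omega},\cdot)$, i.e.\ growth over the one-sided window $[-m,0]$ only. Your estimate $\Vert\mathcal L^{(n)}_\omega y_m\Vert=\Vert\mathcal L^{(n+m)}_{\sigma^{-m}\omega}v_m\Vert\geq g(\mathcal L^{(n+m)}_{\sigma^{-m}\omega},V^*_m(\omega))\Vert v_m\Vert$ then needs $g(\mathcal L^{(n+m)}_{\sigma^{-m}\omega},V^*_m(\omega))\gtrsim e^{(n+m)\lambda_1}$, but nothing in your construction controls this: a subspace maximally expanded over $[-m,0]$ can be contracted, or even partially annihilated (the cocycle is not assumed injective), over $[0,n]$. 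The only available lower bound, $g(\mathcal L^{(n+m)}_{\sigma^{-m}\omega},V^*_m)\geq g(\mathcal L^{(m)}_{\sigma^{-m}\omega},V^*_m)\cdot g(\mathcal L^{(n)}_\omega,E_m(\omega))$ from Lemma \ref{growth_ineq}, has as its second factor exactly the forward growth you are trying to establish, so the argument is circular. Citing Theorem \ref{balancedkingman} ``for bookkeeping'' does not repair this: that theorem constrains $\rho_{m_1}(\mathcal L^{(2n)}_{\sigma^{-n}\omega})$, i.e.\ guarantees that \emph{some} subspace is fast over the balanced window, but says nothing about your particular $V^*_m$.

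The fix is exactly where the paper deploys the balanced window: choose $\tilde E^{(n)}(\omega)$ to nearly attain $\rho_{m_1}(\mathcal L^{(2n)}_{\sigma^{-n}\omega})$ (growth from time $-n$ to time $+n$) and push forward only half-way, setting $E^{(n)}(\omega)=\mathcal L^{(n)}_{\sigma^{-n}\omega}\tilde E^{(n)}(\omega)$. Dividing $g(\mathcal L^{(2n)}_{\sigma^{-n}\omega},\tilde E^{(n)})\geq e^{2n(\lambda_1-\epsilon)}$ by $\Vert\mathcal L^{(n)}_\omega\Vert$ shows via Lemma \ref{growth_ineq} that $\tilde E^{(n)}$ is fast over $[-n,0]$ (feeding Lemma \ref{pushclose} exactly as in your Cauchy step), \emph{and} dividing instead by $\Vert\mathcal L^{(n)}_{\sigma^{-n}\omega}\Vert$ shows that $E^{(n)}(\omega)$ itself satisfies $g(\mathcal L^{(n)}_\omega,E^{(n)}(\omega))\geq e^{n(\lambda_1-3\epsilon)}$. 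The forward lower bound for the limit $E(\omega)$ then follows by approximating $x\in\mathbb S_{E(\omega)}$ by $x_n\in\mathbb S_{E^{(n)}(\omega)}$, the error $d(E^{(n)}(\omega),E(\omega))\Vert\mathcal L^{(n)}_\omega\Vert$ being exponentially negligible thanks to the spectral gap. This is precisely the role of Theorem \ref{balancedkingman}, which your construction as written never actually uses.
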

\begin{proof}
Since $X$ is separable, by \cref{GXsep} $\mathcal G_{m_1}X$ is separable: choose some dense $\{E_i\}_{i\in\mathbb N}\subseteq\mathcal G_{m_1}X$.
Let $\epsilon\in(0,\tfrac15(\lambda_1-\lambda_2))$.
The sets
$$
A_i=\{\omega\in\Omega:g(\calL[2n],E_i)>e^{-\epsilon} \bern(\calL[2n]))\}
$$
are measurable and cover $\Omega$, since for fixed $\omega$, density of the $E_i$s and continuity of $g_{\calL[2n][\sigma^{-n}\omega]}$ means that we may find an $i$ with $g(\calL[2n][\sigma^{-n}\omega],E_i)$ as close to $\bern(\calL[2n][\sigma^{-n}\omega])$ as we please.
Then \cref{lexstruct} above provides measurable functions
\begin{gather*}
\iota^{(n)}(\omega)=\inf\{i\in\mathbb N:g(\calL[2n],E_i)>e^{-\epsilon}\bern (\calL[2n])\},\\
\tilde E^{(n)}(\omega):=E_{\iota^{(n)}\omega},
\end{gather*}
and the pushforward
\begin{gather*}
E^{(n)}(\omega)=\calL\tilde E^{(n)}(\omega)
\end{gather*}
is also then measurable by \cref{GXsep}.

First, we establish that this sequence is Cauchy, and therefore convergent, to a family of spaces $E\in\mathcal M(\Omega\rightarrow\mathcal G_{m_1}X)$.

For almost every $\omega\in\Omega$ the fastest $m_1$ dimensional growth rate
\begin{gather*}
\tfrac1n\log\bern(\calL)\rightarrow\lambda_1,\\
\tfrac1n\log\bern(\calL[n][\omega])\rightarrow\lambda_1,\\
\tfrac1{2n}\log\bern(\calL[2n])\rightarrow\lambda_1,\\
\end{gather*}
and
$$
\bern (\calL)\geq g_{\calL}(\tilde E^{(n)}(\omega))>e^{-\epsilon}\bern (\calL).
$$
Thus for each $\omega$ in this full measure set we may choose $M_\omega$ such that for $n\geq M_\omega$ we can usefully estimate growth under $\mathcal L$ in a few cases:
\begin{gather*}
\Vert\calL\Vert,\Vert\calL[n][\omega]\Vert\in(e^{n(\lambda_1-\epsilon)},e^{n(\lambda_1+\epsilon)}),\\
g(\calL[2n],\tilde E^{(n)}(\omega))\in(e^{2n(\lambda_1-\epsilon)},e^{2n(\lambda_1+\epsilon)})\\
\end{gather*}
and
$$
\Vert\mathcal L_{\sigma^{-(n+1)}\omega}\Vert<e^{n\epsilon}.
$$
Applying the inequalities in \cref{growth_ineq} then,
$$
g(\calL,\tilde E^{(n)}(\omega))\geq\frac{g(\calL[2n],\tilde E^{(n)}(\omega))}{\Vert\calL[n][\omega]\Vert}\geq e^{n(\lambda_1-3\epsilon)}
$$
and
$$
g(\calL[n][\omega],\calL\tilde E^{(n)}(\omega))=g(\calL[n][\omega],E^{(n)}(\omega))\geq\frac{g(\calL[2n],\tilde E^{(n)}(\omega))}{\Vert\calL\Vert}\geq e^{n(\lambda_1-3\epsilon)}.
$$
In addition, $\mathcal L_{\sigma^{-(n+1)}\omega}\tilde E^{(n+1)}(\omega)$ is also guaranteed to have fast growth under $\calL$:
\begin{align*}
g(\calL,\mathcal L_{\sigma^{-(n+1)}\omega}\tilde E^{(n+1)}(\omega)\geq&\frac{g(\calL[n+1][\sigma^{-(n+1)}\omega],\tilde E^{(n+1)}(\omega))}{\Vert\mathcal L_{\sigma^{-(n+1)}\omega}\Vert}\\
>&e^{(n+1)(\lambda_1-3\epsilon)}e^{-n\epsilon}\geq e^{n(\lambda_1-4\epsilon)}.
\end{align*} 
$E^{(n)}(\omega)$ consists then of the image of vectors that were fast from time $-n$ to $0$, and will grow fast from time $0$ to $n$.
Then by \cref{pushclose} with $\Theta=e^{n(\lambda_1-4\epsilon)}$ we have
\begin{align*}
d(E^{(n)}(\omega),E^{(n+1)}(\omega))=&d\left(\calL\tilde E^{(n)}(\omega),\calL(\mathcal L_{\sigma^{-(n+1)}\omega}\tilde E^{(n+1)}(\omega))\right)\\
<&4\frac{e^{n(\lambda_2+\epsilon)}}{e^{n(\lambda_1-4\epsilon)}}<e^{-n(\lambda_1-\lambda_2-5\epsilon)}.
\end{align*}
Thus $E^{(n)}(\omega)$ is Cauchy and convergent since $\mathcal G_{m_1}X$ is complete, say to $E(\omega)$.

To prove equivariance, observe that for $n\geq\max\{M_\omega,M_{\sigma\omega}\}$, we find $\tilde E^{(n+1)}(\sigma\omega)$ is fast under $\calL$:
$$
g(\calL,\tilde E^{(n+1)}(\sigma\omega))\geq\frac{g(\calL[n+1][\sigma^{-n}\omega],E^{n+1}(\sigma\omega))}{\Vert\mathcal L_\omega\Vert}\geq e^{(n+1)(\lambda_1-\epsilon)}e^{-n\epsilon}>e^{n(\lambda_1-2\epsilon)}.
$$
Then, once again, by closeness of images of fast spaces,
$$
d(\calL[n][\sigma^{-n}\omega]\tilde E^{(n+1)}(\sigma\omega),E^{(n)}(\omega))<e^{-n(\lambda_1-\lambda_2-3\epsilon)},
$$
whence
\begin{align*}
d(E^{(n+1)}(\sigma\omega),\mathcal L_\omega E^{(n)}(\omega))=&d(\calL[n+1][\sigma^{-(n+1)}\omega]\tilde E^{(n+1)}(\sigma\omega),\mathcal L_\omega E^{(n)}(\omega))\\
\leq&e^{n\epsilon}d(\calL[n][\sigma^{-(n+1)}\omega]\tilde E^{(n+1)}(\sigma\omega),E^{(n)}(\omega))\\
<&e^{-n(\lambda_1-\lambda_2-4\epsilon)},
\end{align*}
so that $\mathcal L_\omega E(\omega)=\lim_{n\rightarrow\infty}\mathcal L_\omega E^{(n)}(\omega)=E(\sigma\omega)$.

To check that $E(\omega)$ is fast, choose $x\in\mathbb S_{E(\omega)}$.
Then since for $n\geq M_\omega$ we have $d(E^{(n)}(\omega),E(\omega))<e^{-n(\lambda_1-\lambda_2-\epsilon)}$, for each such $n$ we may choose an $x_n\in\mathbb S_{E^{(n)}(\omega)}$ with $\Vert x-x_n\Vert<e^{-n(\lambda_1-\lambda_2-\epsilon)}$.
Since $x_n\in E^{(n)}(\omega)$ and $g(\calL[n][\omega],E^{(n)}(\omega)\geq e^{n(\lambda_1-3\epsilon)}$ it then follows that,
\begin{align*}
\Vert\mathcal L^{(n)}_\omega x\Vert\geq&\Vert\mathcal L^{(n)}_\omega x_n\Vert-\Vert\mathcal L^{(n)}_\omega(x-x_n)\Vert\\
                                   \geq&e^{n(\lambda_1-3\epsilon)}-e^{-n(\lambda_1-\lambda_2-\epsilon)}\Vert\mathcal L^{(n)}_\omega\Vert\\
                               \implies&\Vert\mathcal L^{(n)}_\omega x\Vert\geq\tfrac12 e^{n(\lambda_1-\epsilon)}.
\end{align*}
Thus we may conclude that as well as being equivariant, $E(\omega)$ is fast for all sufficiently large $n$; since the choice of $x$ was arbitrary the growth is uniform:
$$
g(\calL[n][\omega],E(\omega))>e^{n(\lambda_1-\epsilon)}.
$$
On the other hand, for $n$ sufficiently large we also have
$$
g(\calL[n][\omega],E(\omega))\leq \bern (\calL[m][\omega])<e^{n(\lambda_1+\epsilon)},
$$
whence $\tfrac1n\log g(\calL[n][\omega],E(\omega))\rightarrow\lambda_1$.
Finally, we check uniqueness:  Suppose that $E(\omega)$ and $E'(\omega)$ are both equivariant and fast, so that for every $\omega$ there is some $N$ such that for $n\geq N$,
$$
g_{\calL[n][\omega]}(E(\omega)),g_{\calL[n][\omega]}(E'(\omega))>e^{n(\lambda_1-\epsilon)}.
$$
Define $\varphi\in\mathcal M(\Omega\rightarrow[0,1])$ by  $\varphi(\omega)=d(E(\omega),E'(\omega))$.
Applying lemma \ref{pushclose}, for almost every $\omega$ we have
\begin{align*}
\varphi(\sigma^n\omega)=&d(E(\sigma^n\omega),E'(\sigma^n\omega))=d(\calL[n][\omega]E(\omega),\calL[n][\omega]E'(\omega))\\
<&4\frac{\bern[k+1]\calL[n][\omega]}{e^{n(\lambda_1-\epsilon)}}\rightarrow0.
\end{align*}
$\varphi$ tends to zero along all orbits.
Therefore the sets $\{\varphi(\omega)>\epsilon\}$ all have measure zero, whence $\varphi$ vanishes almost everywhere and $E=E'$.
\end{proof}
The following lemma provides the top fast space for a general quasicompact cocycle:
\begin{lem}\label{topdecomp}
Let $\mathcal R=(\Omega,\sigma,X,\mathcal L)$ be a quasicompact, semi-invertible random linear dynamical system.
Then there exists a forward-equivariant decomposition $X=E(\omega)\oplus V(\omega)$ , where $V:\Omega\rightarrow\mathcal G^kX$ and the corresponding projection is a strongly measurable $\Pi:\Omega\rightarrow\mathcal B(X)$.
$V$ is a slow growing space: $\lim_{n\rightarrow\infty}\tfrac1n\log\Vert\calL[n][\omega]\vert_{V(\omega)}\Vert\rightarrow\lambda_2$ almost surely.
Finally, $\Pi_\omega$ is tempered.
\end{lem}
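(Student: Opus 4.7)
The plan is to construct $V(\omega)=\ker\Pi_\omega$ where $\Pi_\omega$ is the strong operator topology limit of approximating projections $\Pi^{(n)}_\omega$, built by pushing forward slow complements from time $-n$ to time $0$, in parallel with the previous lemma's construction of $E(\omega)$.

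First, mirroring the selection of $\tilde E^{(n)}(\omega)$, I would measurably select via \cref{GXsep} and \cref{lexstruct} a subspace $\tilde V^{(n)}(\omega)\in\mathcal G^{m_1}X$ from a countable dense family, such that $\tilde V^{(n)}$ is complementary to $\tilde E^{(n)}$ with bounded projection $\tilde\Pi^{(n)}$, and $\|\calL[n][\sigma^{-n}\omega]|_{\tilde V^{(n)}}\|\leq e^{\epsilon}\rho_{m_1+1}(\calL[n][\sigma^{-n}\omega])$. Since $\calL[n][\sigma^{-n}\omega]$ is bounded below on $\tilde E^{(n)}$, I would push forward $\tilde\Pi^{(n)}$ to a projection $\Pi^{(n)}_\omega:X\to E^{(n)}(\omega)$ by taking the measurable basis of $\tilde E^{(n)}$ from \cref{measbase}, pushing it to a basis of $E^{(n)}(\omega)$, and constructing bounded dual functionals on $X$ via measurable Hahn--Banach extensions; \cref{Tbiggrow} provides the uniform norm control.

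The central technical step is to show $(\Pi^{(n)}_\omega x)_n$ is Cauchy in $X$ for every $x$, with geometric contraction $e^{-n(\lambda_1-\lambda_2-O(\epsilon))}$. The argument would mirror \cref{pushclose}: two successive projections differ by an operator whose action on any fixed $x$ is controlled by the ratio $\rho_{m_1+1}(\calL[2n][\sigma^{-n}\omega])/\rho_{m_1}(\calL[2n][\sigma^{-n}\omega])$, which is asymptotically $e^{-2n(\lambda_1-\lambda_2)}$ by \cref{balancedkingman}. The limit $\Pi_\omega$ then has image $E(\omega)$, and the kernel $V(\omega):=\ker\Pi_\omega$ is the desired codimension-$m_1$ slow subspace. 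Forward equivariance, slow growth, strong measurability, and temperedness follow by passing properties of the approximations to the limit, with temperedness requiring a polynomial-in-$n$ bound on $\|\Pi^{(n)}_\omega\|$ combined with integrability of $\log^+\|\Pi_\omega\|$ and a standard Borel--Cantelli argument.

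The main obstacle I anticipate is the rigorous construction of $\Pi^{(n)}_\omega$. Since $\calL[n][\sigma^{-n}\omega]$ is not assumed surjective, the naive pushforward $\Pi^{(n)}_\omega y := \calL[n][\sigma^{-n}\omega]\tilde\Pi^{(n)}(\calL[n][\sigma^{-n}\omega])^{-1}y$ is ill-defined on much of $X$, and attempting to define $\Pi^{(n)}_\omega$ through dual functionals via $\phi_i(\calL[n][\sigma^{-n}\omega]x):=\tilde\phi_i(x)$ gives only approximately well-defined functionals on $\mathrm{im}(\calL[n][\sigma^{-n}\omega])$: the ambiguity is exponentially small in $n$ by the fast/slow separation, but nonzero. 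Resolving this cleanly, likely by defining $\Pi^{(n)}_\omega$ first on a dense subspace and extending via Hahn--Banach with measurable selections while maintaining the uniform norm bound and control of the Cauchy estimate, is where the delicacy lies.
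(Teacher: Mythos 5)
There is a genuine gap, and it sits exactly where you anticipate it: the pushforward of the slow projection through $\calL[n][\sigma^{-n}\omega]$. Since that operator need be neither injective nor surjective, the functionals $\phi_i$ you define by $\phi_i(\calL[n][\sigma^{-n}\omega]x)=\tilde\phi_i(x)$ are not merely ``approximately well-defined'' on the image --- off the image (which may be a proper, non-closed, non-dense subspace of $X$) they are not defined at all, and a Hahn--Banach extension chosen at stage $n$ carries no dynamical relation to the extension chosen at stage $n+1$. Consequently the Cauchy estimate $\Vert\Pi^{(n+1)}_\omega x-\Pi^{(n)}_\omega x\Vert\lesssim e^{-n(\lambda_1-\lambda_2-O(\epsilon))}$, which is the heart of the argument, cannot be propagated to all $x\in X$: it is only available on vectors reachable from time $-n$. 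The paper's proof avoids backward pushforwards of operators entirely. Once $E(\omega)$ is in hand, it works purely forward in time: for each $x$ it measurably selects (via a countable dense set of coefficient vectors and \cref{lexstruct}) a translate $T_jx=x-\sum_iq_{ij}b_i(E(\omega))$ satisfying $\Vert\calL[n][\omega]T_jx\Vert\le e^\epsilon\bern[m_1+1](\calL[n][\omega])\Vert T_jx\Vert$, sets $\Pi^{(n)}_\omega x=T_{\iota_n(\omega,x)}x$, and proves this (nonlinear!) sequence of maps is Cauchy because successive differences lie in $E(\omega)$ and are therefore controlled by $g_{\calL[n][\omega]}(E(\omega))^{-1}$ times their (slow) forward images. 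Linearity, boundedness and idempotence are recovered only in the limit. A secondary quantitative point: a codimension-$m_1$ complement on which the cocycle norm is close to $\bern[m_1+1]$ need not exist; the relevant infimum is the Gelfand number $s_{m_1+1}$, which exceeds $\bern[m_1+1]$ by a factor depending on $m_1$ (see the appendix). This is harmless for exponential rates but should not be asserted as stated.

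The temperedness step is also incomplete as proposed. A ``standard Borel--Cantelli argument'' requires $\log^+\Vert\Pi_\omega\Vert\in L^1(\Omega)$, which is not established and does not follow from the pointwise boundedness of $\Pi_\omega$; nothing in the construction gives an integrable majorant for $\Vert\Pi_\omega\Vert$. The paper instead fixes a positive-measure set $A$ on which $\Vert\Pi_\omega\Vert\le M$, forms the modified integrable cocycle $\mathcal L'_\omega=\mathcal L_\omega\circ\Pi_\omega$ on $A$ and $\mathcal L_\omega$ off $A$, applies \cref{easyking} to $\log\Vert\calL[n][\omega]\Vert$ and $\log\Vert\calLp\Vert$ over the window $[n,2n]$, and extracts $\Vert P_{\sigma^n\omega}\Vert\le e^{3n\epsilon}$ from the resulting fast/slow separation. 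You would need to adopt some argument of this kind (or otherwise prove integrability of $\log^+\Vert\Pi_\omega\Vert$) for the temperedness claim to stand.
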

\begin{proof}
By \cref{measbase} we may choose a measurable family of bases
$$
(b_i)_{i=1}^k:\mathcal G_{m_1}X\rightarrow\mathbb S_X^k.
$$
Write $v_i(\omega)=b_i(E(\omega))$ which is itself then measurable.
Let $\{(q_{ij})_{i=1}^k\}_{j\in\mathbb N}$ be dense in $\mathbb R^k$.
Let $T_j\in\mathcal M(\Omega\times X\rightarrow X)$ be defined by
$$
T_j(\omega,x)=x-\sum_{i=1}^kq_{ij}v_i(\omega),
$$
so that for each $\omega\in\Omega$, the collection $\{T(\omega,\cdot)\}$ is dense in translations of $X$ by elements of $E(\omega)$.
Let $\epsilon<\tfrac12(\lambda_1-\lambda_2)$.
Define $\iota_n\in\mathcal M(X\setminus E(\omega)\rightarrow\mathbb N)$ for $n\in\mathbb N$ by
$$
\iota_n(\omega,x)=\inf\{j\in\mathbb N:\Vert\mathcal L^{(n)}_\omega T_jx\Vert\leq e^\epsilon \bern[k+1](\mathcal L_\omega^{(n)})\Vert T_jx\Vert\}.
$$
$\Pi^{(n)}\in\mathcal M(X\rightarrow X)$ may then be defined piecewise by
\begin{gather*}
\Pi^{(n)}_\omega(x)=
\begin{cases}
0\text{, if }x\in E(\omega),\\
T_{\iota_n(\omega,x)}x\text{ otherwise,}
\end{cases}\text{ and}\\
P^{(n)}_\omega(x)=x-\Pi^{(n)}_\omega(x).
\end{gather*}
To see that $\iota$ is finite on $X\setminus E(\omega)$, first note that the set
$$
\{T_jx:j\in\mathbb N\}
$$
is dense in the set $S=E(\omega)+x$.
The set
$$
U=\{y\in E(\omega)\oplus\spn\{x\}:\Vert\calL[n][\omega]y\Vert<e^{\tfrac12\epsilon}\bern[k+1](\calL[n][\omega])\Vert y\Vert\}
$$
is open and scale invariant - for every $\theta\neq 0$ we have $\theta U=U$, so
$$
U'=(E(\omega)+x)\cap U
$$
is open and nonempty in $S$, whence 
$$
U'\cap\{T_jx\}_j\neq\emptyset,
$$
so that an $\iota_n(\omega,x)$ may be found in finite time.
To see that $\iota_n$ is measurable, note that
$$
\iota_n^{-1}\{1,\cdots,m\}=\bigcup_{i=1}^m\{(\omega,x):\Vert\calL[n][\omega]T_jx\Vert\leq e^\epsilon\bern[k+1](\calL[n][\omega])\Vert\calL[n][\omega]T_jx\Vert\}
$$
Let $x\in\mathbb S_X$.
Immediately from the definition, $P^{(n)}_\omega x\in E(\omega)$.
By convergence of the $\tfrac1n\log \bern $s, for all $n$ greater than or equal to some $N_\omega$,
\begin{gather*}
\Vert\mathcal L^{(n+1)}_\omega \Pi^{(n)}_\omega(x)\Vert\leq e^{(n+1)(\lambda_2+\epsilon)}\Vert\Pi^{(n)}_\omega x\Vert,\\
\Vert\mathcal L^{(n+1)}_\omega \Pi^{(n+1)}_\omega(x)\Vert\leq e^{(n+1)(\lambda_2+\epsilon)}\Vert\Pi^{(n)}_\omega x\Vert,\\
\Vert\mathcal L_{\sigma^n\omega}\Vert<e^{n\epsilon}\text{ and}\\
g_{\calL[n][\omega]}(E(\omega))\geq e^{n(\lambda_1-\epsilon)}.
\end{gather*}
In addition there is an easy bound independent of $x\in\mathbb S_X$ on $\Pi^{(n)}_\omega$:
\begin{align*}
\Vert\Pi^{(n)}_\omega x\Vert\leq&\Vert x\Vert+\Vert P^{(n)}_\omega x\Vert\\
\leq&1+e^{-n(\lambda_1-\epsilon)}\Vert\calL[n][\omega]P^{(N)}_\omega x\Vert\\
\leq&1+e^{-n(\lambda_1-\epsilon)}\big(\Vert\calL[n][\omega]x\Vert+\Vert\calL[n][\omega]\Pi^{(n)}_\omega x\Vert\big)\\
\leq&1+e^{-n(\lambda_1-\epsilon)}\big(e^{n(\lambda_1+\epsilon)}+e^{n(\lambda_2+\epsilon)}\Vert\Pi^{(n)}_\omega x\Vert\big),
\end{align*}
which rearranged yields
$$
\Vert\Pi^{(n)}_\omega x\Vert\leq\frac{1+e^{2n\epsilon}}{1-e^{-n(\lambda_1-\lambda_2-2\epsilon)}}\leq e^{3n\epsilon}.
$$
Consider differences between successive approximate slow components:
\begin{align*}
\Vert \Pi^{(n+1)}_\omega(x)-\Pi^{(n)}_\omega(x)\Vert=&\Vert P^{(n+1)}_\omega(x)-P^{(n)}_\omega(x)\Vert\\
\leq&\frac{\Vert\mathcal L^{(n+1)}_\omega(P^{(n+1)}_\omega x-P^{(n)}_\omega(x))\Vert}{g_{\calL[n][\omega]}(E(\omega))}\\
=&\frac{\Vert\mathcal L^{(n+1)}_\omega(\Pi^{(n+1)}_\omega x-\Pi^{(n)}_\omega(x))\Vert}{g_{\calL[n][\omega]}(E(\omega))}\\
\leq&e^{-(n+1)(\lambda_1-\epsilon)}\big(\Vert\calL[n+1][\omega] \Pi^{(n)}_\omega(x)\Vert+\Vert\calL[n+1][\omega] \Pi^{(n+1)}_\omega(x)\Vert\big)\\
\leq&e^{-(n+1)(\lambda_1-\epsilon)}\big(\Vert\mathcal L_{\sigma^n\omega}\Vert\Vert\calL[n][\omega] \Pi^{(n)}_\omega(x)\Vert+e^{n(\lambda_2+\epsilon)}\Vert\Pi^{(n+1)}_\omega(x)\Vert\big)\\
\leq&e^{-(n+1)(\lambda_1-\lambda_2-3\epsilon)}\big(e^{3n\epsilon}+e^{3(n+1)\epsilon}\big)\\
\leq&e^{-(n+1)(\lambda_1-\lambda_2-7\epsilon)},
\end{align*}
whence gaps between subsequent points decay exponentially, so that $(\Pi^{(n)}_\omega(x))_{n\in\mathbb N}$ forms a Cauchy and thus convergent sequence.
Set
$$
\Pi_\omega(x)=\lim_{n\rightarrow\infty}\Pi^{(n)}_\omega(x).
$$
Note that $\Pi_\omega\mathbb S_X$ is then bounded, since
\begin{align*}
\Vert\Pi_\omega x\Vert=&\lim_{m\rightarrow\infty}\Vert \Pi^{(m)}_\omega x\Vert\\
\leq&\Vert\Pi^{(N)}_\omega x\Vert+\lim_{m\rightarrow\infty}\Vert\Pi^{(N)}_\omega x-\Pi^{(m)}_\omega(x)\Vert\\
\leq&e^{3N\epsilon}+\frac{e^{-N(\lambda_1-\lambda_2-7\epsilon)}}{1-e^{-(\lambda_1-\lambda_2-7\epsilon)}},
\end{align*}
the final line being independent of choice of $x$.
Not only then is $\Pi^{(n)}_\omega x$ convergent, but we have the estimate
$$
\Vert\Pi^{(n)}_\omega x-\Pi_\omega x\Vert\leq\sum_{i=1}^\infty\Vert\Pi^{(n+i)}_\omega x-\Pi^{(n+i-1)}_\omega x\Vert<\frac{e^{-n(\lambda_1-\lambda_2-\epsilon)}}{1-e^{-\lambda_1-\lambda_2-\epsilon}}.
$$

$\Pi_\omega$ is linear: to see this let $b,c\in X$ and $t\in\mathbb R$, and set
$$
d:=\Pi_\omega^{(n)}(b+tc)-\Pi_\omega^{(n)}b-t\Pi_\omega^{(n)}c.
$$
Certainly $d\in E(\omega)$, since
\begin{align*}
d=&\Pi_\omega^{(n)}(b+tc)-(b+tc)+b-\Pi_\omega^{(n)}b+tc-t\Pi_\omega^{(n)}c\\
=&-P_\omega^{(n)}(b+tc)+P_\omega^{(n)}b+tP_\omega^{(n)}c\in E(\omega).
\end{align*}
Then applying the estimate for $\Pi^{(n)}_\omega$
\begin{align*}
\Vert d\Vert\leq&e^{-n(\lambda_1-\epsilon)}\Vert\calL[n][\omega]d\Vert\\
\leq&e^{-n(\lambda_1-\epsilon)}(\Vert\calL[n][\omega]\Pi^{(n)}_\omega(b+tc)\Vert+\Vert\calL[n][\omega]\Pi^{(n)}_\omega(b)\Vert+\Vert t\calL[n][\omega]\Pi^{(n)}_\omega c\Vert)\\
<&e^{-n(\lambda_1-\lambda_2-2\epsilon)}\big(\Vert\Pi^{(n)}_\omega(b+tc)\Vert+\Vert\Pi^{(n)}_\omega b\Vert+\Vert t\Pi^{(n)}_\omega c\Vert\big)\\
<&e^{-n(\lambda_1-\lambda_2-2\epsilon)}\big(\Vert\Pi_\omega(b+tc)\Vert+\Vert\Pi_\omega b\Vert+\Vert t\Pi_\omega c\Vert+\\
&(\Vert b+tc\Vert+\Vert b\Vert+\Vert tc\Vert)\frac{e^{-n(\lambda_1-\lambda_2-\epsilon)}}{1-e^{-\lambda_1-\lambda_2-\epsilon}}\big)\rightarrow0\text{ as }n\rightarrow\infty.
\end{align*}
Thus $d=0$ and $\Pi_\omega\in\mathcal B(X)$.
By construction $\Pi_\omega E(\omega)=0$.
On the other hand, $P_\omega X=E(\omega)$ since for any $x\in X$ and $n\in\mathbb N$ we have $x-\Pi^{(n)}_\omega x\in E(\omega)$.
$\Pi_\omega$ is then idempotent, since for any $x\in X$
$$
\Pi_\omega^2x-\Pi_\omega x=\Pi_\omega\circ P_\omega x\in\Pi_\omega E(\omega)=\{0\},
$$
and is thus a projection.
Set $V(\omega)=\Pi_\omega X$ so that $X=V(\omega)\oplus E(\omega)$.
For all $n\geq N_\omega$, and any $x\in V_\omega(x)$, because of the exponential rate of convergence there is a sequence of approximants
$$
\Vert x_n-x\Vert<C_\omega e^{-n(\lambda_1-\lambda_2-7\epsilon)}
$$
with
$$
\Vert\calL[n][\omega] x_n\Vert\in[0,e^{n(\lambda_2+\epsilon)}).
$$
Therefore,
$$
\Vert\calL[n][\omega]x\Vert<\Vert\calL\Vert\Vert x-x_n\Vert+\Vert\calL[n][\omega]x_n\Vert<e^{n(\lambda_2+8\epsilon)},
$$
whence
$$
\Vert\calL[n][\omega]\vert_{V(\omega)}\Vert<e^{n(\lambda_2+8\epsilon)}
$$
for $n\geq N_\omega$.
By the definition of $\lambda_2$, it is possible to choose $Y_n\in\mathcal G_{m_1+1}(\omega)$ with $\tfrac1ng_{\calL[n][\omega]}(Y_n)\rightarrow\lambda_2$, which means by dimension counting that there is always some $x_n\in\mathbb S_{V(\omega)\cap Y_n}$ with $\tfrac1n\Vert\calL[n][\omega]x_n\Vert\rightarrow\lambda_2$ and so
$$
\tfrac1n\log\Vert\calL[n][\omega]\vert_{V(\omega)}\Vert\rightarrow\lambda_2.
$$
Further, the map $\omega\mapsto\Pi_\omega$ is strongly measurable, since for each $x\in X$ the map $\omega\mapsto\Pi_\omega x$ is the limit of a sequence of measurable functions.

To see that $V(\omega)$ is equivariant it is sufficient to show that $P_{\sigma\omega}\circ\mathcal L_\omega x=0$ for all $x\in V(\omega)$.
If this were not the case, then $\mathcal L_\omega x$ would have a nonzero component in $E(\sigma\omega)$.
From this is would follow that $\lambda_\omega(x)=\lambda_1$ which would contradict the fact that $\Vert\calL[n][\omega]\vert_{V(\omega)}\Vert<e^{n(\lambda_2+8\epsilon)}$ for sufficiently large $n$.

As for temperedness of the projections:
Since $\Pi_\omega$ is bounded pointwise we may choose an $A\subseteq\Omega$ of positive measure on which $\Vert\Pi_\omega\Vert$ is at most some $M>0$.
Then define a new cocycle $\mathcal L'$ by
$$
\mathcal L_\omega'=\begin{cases}\mathcal L_\omega\circ\Pi_\omega\text{ if }\omega\in A,\\\mathcal L\text{ otherwise.}\end{cases}
$$
Then $\mathcal L_\omega'$ is forward-integrable since
$$
\int_\Omega\log^+\Vert\mathcal L_\omega'\Vert\leq \int_\Omega\log^+\Vert\mathcal L\Vert+\int_A\log^+M<\infty.
$$
Since $A$ has positive measure, there is almost surely an $n$ such that 
$\calLp E(\omega)=0$,
and so
$$
\calLp X\subseteq V(\sigma^n\omega).
$$
We may then conclude that for each $x\in X$, $\lambda_\omega'(x)\leq\lambda_2$ and $\lambda_1'\leq\lambda_2$.
Applying lemma \ref{easyking} to the subadditive families $g_n=\log\Vert\calL[n][\omega]\Vert$ and $g_n'=\log\Vert\calLp\Vert$ there is an $N_1$ such that for $n\geq N_1$,
$$
\Vert\mathcal L'_{n\rightarrow2n}\Vert<e^{n(\lambda_2+\epsilon)}\text{ and }\Vert\mathcal L_{n\rightarrow2n}\Vert>e^{n(\lambda_1-\epsilon)}.
$$
Clearly $\mathcal L'_{n\rightarrow2n}\neq\mathcal L_{n\rightarrow2n}$ since the latter has a greater norm.
Therefore there must be some $j$ such that
$$
\mathcal L'_{n\rightarrow2n}=\mathcal L'_{n\rightarrow j}\circ\Pi_{\sigma^j\omega}\circ\mathcal L'_{j\rightarrow2n}=\mathcal L_{n\rightarrow2n}\circ\Pi_{\sigma^n\omega}.
$$
In addition, there exists an $N_2$ such that for $n\geq N_2$,
$$
g_{\calL[n][\sigma^n\omega]}(E(\sigma^n\omega))\in(e^{n(\lambda_1-\epsilon)},e^{n(\lambda_1+\epsilon)}).
$$
As a final condition, there exists some $N_3\in\mathbb N$ such that for all $n\geq N_3$, $g_{\calL[n][\omega]}(E(\omega))\in(e^{n(\lambda_1-\epsilon)},e^{n(\lambda_1+\epsilon)})$.
Let $x\in\mathbb S_X$.
Putting these together, for all $n\geq\max\{N_1,N_2,N_3,\tfrac1\epsilon\}$,
\begin{align*}
\Vert P_{\sigma^n\omega}x\Vert\leq&e^{-n(\lambda_1-\epsilon)}\Vert\calL[n][\sigma^n\omega]P_{\sigma^n\omega}x\Vert\\
\leq&e^{-n(\lambda_1-\epsilon)}\big(\Vert\calL[n][\sigma^n\omega]x\Vert+\Vert\calL[n][\sigma^n\omega]\Pi_{\sigma^n\omega}x\Vert\big)\\
\leq&e^{-n(\lambda_1-\epsilon)}\big(e^{n(\lambda_1+\epsilon)}\Vert x\Vert+\Vert\mathcal L_{n\rightarrow 2n\omega}'x\Vert\big)\\
\leq& e^{-n(\lambda_1-\epsilon)}\big(e^{n(\lambda_1+\epsilon)}+\Vert{\mathcal L'}_{n\rightarrow2n}\Vert\big)\Vert x\Vert\\
\leq& e^{2n\epsilon}+e^{-n(\lambda_1-\lambda_2-2\epsilon)}\leq e^{3n\epsilon}.
\end{align*}
$\epsilon$ was arbitrary and the norms of $\Pi$ and $P$ differ by at most $1$ so $\tfrac1n\log\Vert P_{\sigma^n\omega}\Vert,\tfrac1n\log\Vert\Pi_{\sigma^n\omega}\Vert\rightarrow0$ as required.
\end{proof}
\begin{cor}
$V(\omega)=\tilde V(\omega)=\{x\in X:\lambda_\omega(x)\leq\lambda_2\}$.
\end{cor}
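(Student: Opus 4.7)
The plan is to prove the equality of these two sets by a direct pair of inclusions, exploiting the decomposition $X = E(\omega) \oplus V(\omega)$ and the sharp growth rates already established in \cref{topdecomp}.

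For the inclusion $V(\omega) \subseteq \tilde V(\omega)$, I would simply quote the conclusion of \cref{topdecomp} that $\tfrac1n\log\Vert\calL[n][\omega]|_{V(\omega)}\Vert \to \lambda_2$ almost surely. For any $x \in V(\omega)$ and any $\epsilon > 0$, this yields $\Vert\calL[n][\omega]x\Vert \leq \Vert\calL[n][\omega]|_{V(\omega)}\Vert\cdot\Vert x\Vert \leq e^{n(\lambda_2+\epsilon)}\Vert x\Vert$ for all sufficiently large $n$, so $\lambda_\omega(x) \leq \lambda_2 + \epsilon$; letting $\epsilon \to 0$ gives $x \in \tilde V(\omega)$.

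For the reverse inclusion $\tilde V(\omega) \subseteq V(\omega)$, I would suppose $x \in X$ satisfies $\lambda_\omega(x) \leq \lambda_2$ and decompose $x = e + v$ with $e \in E(\omega)$ and $v \in V(\omega)$ using the direct sum decomposition. The goal is to rule out $e \neq 0$. The fast-space lemma gives $g(\calL[n][\omega], E(\omega)) \geq e^{n(\lambda_1-\epsilon)}$ for $n \geq N_\omega$, whence $\Vert\calL[n][\omega]e\Vert \geq e^{n(\lambda_1-\epsilon)}\Vert e\Vert$, while the preceding inclusion gives $\Vert\calL[n][\omega]v\Vert \leq e^{n(\lambda_2+\epsilon)}\Vert v\Vert$ for large $n$. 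Picking $\epsilon < \tfrac13(\lambda_1 - \lambda_2)$, the reverse triangle inequality yields
\[
\Vert\calL[n][\omega]x\Vert \geq e^{n(\lambda_1-\epsilon)}\Vert e\Vert - e^{n(\lambda_2+\epsilon)}\Vert v\Vert \geq \tfrac12 e^{n(\lambda_1-\epsilon)}\Vert e\Vert
\]
for $n$ large enough, so if $e \neq 0$ we would obtain $\lambda_\omega(x) \geq \lambda_1 - \epsilon > \lambda_2$, contradicting our hypothesis. Therefore $e = 0$ and $x = v \in V(\omega)$.

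There is no real obstacle here: both inclusions are immediate consequences of the already-proven growth estimates on $E(\omega)$ (lower bound $\lambda_1$) and on $V(\omega)$ (upper bound $\lambda_2$), combined with the gap $\lambda_1 > \lambda_2$ coming from quasicompactness. The only care needed is in choosing $\epsilon$ small relative to the spectral gap and in ensuring $n$ is large enough that the error terms are uniform along the chosen orbit, both of which hold on a full-measure set of $\omega$.
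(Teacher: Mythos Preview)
Your proof is correct and follows essentially the same approach as the paper: the first inclusion uses the slow-growth estimate on $V(\omega)$ from \cref{topdecomp}, and the second inclusion argues that a nonzero $E(\omega)$-component forces growth at rate $\lambda_1$, contradicting membership in $\tilde V(\omega)$. You have simply made explicit the reverse-triangle-inequality computation that the paper compresses into the single sentence ``$P_\omega(x)\neq 0$ so that $\lambda_\omega(x)=\lambda_1$''.
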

\begin{proof}
$\tfrac1n\log\Vert\calL[n][\omega]\vert_V(\omega)\Vert\rightarrow0$ establishes the fact that $V(\omega)\subseteq\tilde V(\omega)$.
Conversely, any $x\in\tilde V(\omega)\setminus V(\omega)$ would have $P_\omega(x)\neq 0$ so that $\lambda_\omega(x)=\lambda_1$, contradicting the definition of $\tilde V$.
\end{proof}
\section{Proof of main result}
Finally, we may conclude with the main result, a well behaved decomposition of the space acted on by a random linear dynamical system:
\begin{proof}[Proof of thorem \ref{mainres}]
The decomposition is obtained inductively.
At each stage it is shown that if $\lambda_{i+1}$ exists, there exists an equivariant decomposition
$$
X=E_{\leq i}(\omega)\oplus V_{i+1}(\omega)
$$
with $E_{\leq i}\in\mathcal{SM}(\omega\rightarrow\mathcal G_{M_i}X)$ and bounded projections
\begin{gather*}
\Pi_{i+1\omega}:X\rightarrow V_{i+1}(\omega)\text{ and}\\
P_{i\omega}:X\rightarrow E_{\leq i}(\omega).
\end{gather*}
The existence of the top fast space has already been established - here denote this $E_{<2}(\omega)\oplus V_2(\omega)$ with measurable projections $\Pi_{2\omega}$ and $P_{2\omega}$.

Suppose that the statement is true up to $i=l-1$.
If $\lambda_l=\nu$ then we are done, so suppose otherwise - that there exists $\lambda_{l+1}\geq\nu$.
The projection $\Pi_{l\omega}$ is pointwise bounded, so that there exists some $M>0$ such that $A=\{\Vert\Pi_{l\omega}\Vert<M\}$ has positive measure.
$$
\mathcal L_\omega'=\begin{cases}\mathcal L_\omega\circ\Pi_{l\omega},\omega\in A,\\ \mathcal L_\omega\text{ otherwise.}\end{cases}.
$$
As before $\mathcal L'$ is forward integrable.
Write $\lambda',\mu_i',\lambda_i',\nu',E_l\oplus V',M_i',P'$ and $\Pi'$ for the asymptotic growth rates, Lyapunov exponents, decomposition, fast space multiplicities, fast projection and slow projection with $\mathcal L'$.
There almost surely exists an $N\in\mathbb N$ such that for each $n\geq N$, the followng hold:
\begin{itemize}
\item $\calLp=\calL[n][\omega]\circ\Pi_{l\omega}=\Pi_{l\sigma^n\omega}\circ\calL[n][\omega]$.
\item
$\Vert\Pi_{\sigma^n\omega}\Vert<e^{n\epsilon}$
\item
$g_{\calL[n][\omega]}(E(\omega))>\Vert\calL[n][\omega]|_{V_{l+1}(\omega)}\Vert.$
\end{itemize}
Therefore applying lemma\ref{Tsmallgrow} and \ref{Tbiggrow} to $\calLp=\calL[n][\omega]\circ\Pi_\omega$, for each $k$ the following holds:
$$
\rho_{k+M_l}(\calL[n][\omega])\leq\rho_k(\calLp)\leq4\Vert\Pi_{\sigma^n\omega}\Vert\Vert\Pi_\omega\Vert\rho_{k+M_l}(\calL[n][\omega])\leq 4e^{n\epsilon}\Vert\Pi_\omega\Vert\rho_{k+M_l}(\calL[n][\omega]),
$$
whence $\mu_{M_l+k}=\mu_k'$ and $\nu=\nu'$.
Further then, $m_k'=m_{l+k}$ and $\lambda+k'=\lambda_{k+l}$.
Set $V_{l+1}(\omega)=V_l(\omega)\cap V'(\omega)$ and $\Pi_{l+1\omega}=\Pi_{l\omega}\circ\Pi'_\omega$.
Write $E_{\leq l}(\omega)=E_{<l}(\omega)+E_l(\omega)$.
The equality $\mathcal L_\omega=\mathcal L_\omega'$ holds on $V_l(\omega)$, whence $\lambda_\omega=\lambda_\omega'$ on $V_l(\omega)$.
Since $A$ has positive measure, there is almost surely an $N$ such that for all $n\geq N$,
$\calLp E_{<l}(\omega)=0,~\calLp[n][\sigma^{-n}\omega]X\subseteq V_l(\omega)$,
and $\calLp X\subseteq V_l(\sigma^n\omega).$
We may then conclude that for each $x\in X$, $\lambda_\omega'(x)\leq\lambda_l$ and $\lambda_1'\leq\lambda_l$.
As such, for each $i<l$, have $E_i(\omega)\subseteq V'(\omega)$.
As for $E_l(\omega)$, by equivariance $E_l(\omega)=\calLp[n][\sigma^{-n}\omega] E(\sigma^{-n}\omega)\subseteq V_l(\omega)$.
Thus $X=E_{<l}(\omega)\oplus E_l(\omega)\oplus(V_l(\omega)\cap V'(\omega))=E_{<l}(\omega)\oplus E_l(\omega)\oplus V_{l+1}(\omega)$.

$\Pi_{l+1}$ is then also tempered:
$$
0\leq\tfrac1n\log\Vert\Pi_{l+1\sigma^n\omega}\Vert\leq\tfrac1n\log\Vert\Pi_{l\sigma^n\omega}\Vert+\tfrac1n\log\Vert\Pi_{\sigma^n\omega}'\Vert\rightarrow0.
$$
Let $\epsilon>0$.
There exists an $N$ such that for $n\geq N,~\Vert P_{\sigma^n\omega}'\Vert,\Vert\Pi_{\sigma^n\omega}'\Vert<e^{n\epsilon}$ and so for all $x\in X$ we have $\max\{\Vert P_{\sigma^n\omega}'\calL[n][\omega]x\Vert,\Vert\Pi_{\sigma^n\omega}'\calL[n][\omega]x\Vert\}\leq\Vert\calL[n][\omega]x\Vert$.

Rearranging this last inequality and letting $x\in E_{\leq l}(\omega)$,
\begin{align*}
\Vert\calL[n][\omega]x\Vert\geq&e^{-n\epsilon}\max\{\Vert P_{\sigma^n\omega}'\calL[n][\omega]x\Vert,\Vert\Pi_{\sigma^n\omega}'\calL[n][\omega]x\Vert\}\\
\geq&e^{-n\epsilon}\max\{\Vert\calL[n][\omega] P_\omega'x\Vert,\Vert\calL[n][\omega]\Pi_\omega'x\Vert\}\\
\geq&e^{-n\epsilon}\max\{e^{n(\lambda_l-\epsilon)}\Vert P_{\omega}'x\Vert,e^{n(\lambda_1'-\epsilon)}\Vert\Pi_\omega'x\Vert\}\\
\geq&e^{-n\epsilon}e^{n(\lambda_l-\epsilon)}\max\{\Vert\Pi_\omega'x\Vert,\Vert P_\omega'x\Vert\}\geq\tfrac12e^{n(\lambda_l-2\epsilon)}.
\end{align*}
For $n\geq\max\{N,\tfrac1\epsilon\log2\}$ it follows that $g_{\calL[n][\omega]}(E_{\leq l}(\omega))\geq e^{n(\lambda_l-3\epsilon)}$.
The characterisation
$$
V_{l+1}(\omega)=\{x\in X:\limsup_{n\rightarrow\infty}\tfrac1n\log\Vert\calL[n][\omega]x\Vert\leq\lambda_{l+1}\}
$$
holds.

\end{proof}
\bibliography{proof_met}{}
\bibliographystyle{plain}
\appendix
\section*{Appendix: Equivalence of growth statistics}
The Gelfand numbers may be defined by $s_k(T)=\inf_{V\in\mathcal G^{k-1}X}\Vert T|_V\Vert$.
Throughout this section we assume $T\in\mathcal B(X)$.
\begin{lem}
$\rho_k\leq s_k$.
\end{lem}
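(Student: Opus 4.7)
The plan is to show, for any dimension-$k$ subspace $V$ and any codimension-$(k-1)$ subspace $W$, that $V \cap W$ contains a unit vector, and then exploit that this vector lies in both $V$ (bounding $\|Tx\|$ from below by $g_T(V)$) and $W$ (bounding $\|Tx\|$ from above by $\|T|_W\|$).

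First I would carry out the dimension-counting step. Fix $V \in \mathcal G_kX$ and $W \in \mathcal G^{k-1}X$. Consider the canonical quotient map $\pi\colon X \to X/W$; its codomain is a vector space of dimension $k-1$. The restriction $\pi|_V\colon V \to X/W$ is a linear map from a $k$-dimensional space to a $(k-1)$-dimensional space, hence has nontrivial kernel, which is precisely $V \cap W$. Therefore we may choose some $x \in \mathbb S_{V \cap W}$.

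Next I would apply the two-sided bound on $\|Tx\|$. Since $x \in \mathbb S_V$, we have
$$
g_T(V) \;\leq\; \|Tx\|,
$$
and since $x \in W$ with $\|x\| = 1$, we have
$$
\|Tx\| \;\leq\; \|T|_W\|.
$$
Combining these gives $g_T(V) \leq \|T|_W\|$. As this holds for every such $V$ and $W$, taking the supremum over $V \in \mathcal G_kX$ and the infimum over $W \in \mathcal G^{k-1}X$ yields $\rho_k(T) \leq s_k(T)$.

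There is essentially no obstacle here; the only subtle point is the dimension-count itself, which is why I phrased it through the quotient map rather than ``$\dim V + \codim W < \dim X$'' (a statement that fails to even make sense when $X$ is infinite-dimensional). Everything else is the standard observation that a vector in $V \cap W$ is simultaneously controlled below by the Bernstein quantity and above by the Gelfand quantity.
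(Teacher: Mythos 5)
Your proof is correct and follows essentially the same route as the paper: produce a unit vector in $V\cap W$ and sandwich $\|Tx\|$ between $g_T(V)$ and $\|T|_W\|$. Your explicit dimension count via the quotient map $X\to X/W$ cleanly justifies the nontrivial intersection that the paper simply asserts, and dispensing with the $\epsilon$ by taking the supremum over $V$ directly is a minor tidying, not a different argument.
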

\begin{proof}
Let $\epsilon>0$.
Choose $V\in\mathcal G_kX$ with $g_T(V)\geq\rho_k(T)-\epsilon$.
Then since any $W\in\mathcal G^{k-1}X$ intersects $V$ nontrivially, suppose $x$ is a unit vector in the intersection.
Then $\rho_k(T)\leq\Vert Tx\Vert\leq\Vert T|_W\Vert$.
$W$ was arbitrary so $\rho_k(T)\leq s_k(T)$.
\end{proof}
A better bound than the following may be found in the work of Pietsch on $s$-numbers\cite{pietsch1974}:
\begin{lem}
For all $k$, $s_k\leq4^{k-1}((k-1)!)^{\tfrac12}\rho_k$.
\end{lem}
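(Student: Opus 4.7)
Proceed by induction on $k$. The base case $k=1$ is immediate: $s_1(T)=\Vert T\Vert=\rho_1(T)$, and $4^0(0!)^{1/2}=1$, so the inequality holds with equality.

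\textbf{Inductive step.} Assume the inequality for $k-1$. First invoke the inductive hypothesis to produce $V\in\mathcal G^{k-2}X$ with $\Vert T|_V\Vert\leq(4^{k-2}((k-2)!)^{1/2}+\epsilon)\rho_{k-1}(T)$. The remaining task is to pass to a codimension-$1$ subspace $W\subseteq V$ (hence codimension $k-1$ in $X$) on which the norm of $T$ is controlled by $\rho_k(T)$, rather than by the larger $\rho_{k-1}(T)$. Accomplish this by the following geometric construction inside $V$: choose $x_0\in\mathbb S_V$ nearly maximizing $\Vert T|_V\Vert$, select $k-2$ additional unit vectors $x_1,\dots,x_{k-2}\in V$ forming an Auerbach basis of an auxiliary $(k-1)$-dimensional subspace of $V$, and use Hahn-Banach to produce a norm-controlled $\phi\in V^*$ satisfying $\phi(x_0)=1$ and $\phi(x_i)=0$ for $1\leq i\leq k-2$. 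Set $W:=V\cap\ker\phi$. For any $y\in\mathbb S_W$, the span $U_y:=\spn(x_0,x_1,\dots,x_{k-2},y)$ is $k$-dimensional (since $\phi(y)=0$ while $\phi(x_0)=1$), so by the definition of $\rho_k$ there is a unit vector $v=a_0x_0+\sum_{i\geq1}a_ix_i+by\in U_y$ with $\Vert Tv\Vert\leq(1+\epsilon)\rho_k(T)$.

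From here, extract that $|a_0|$ is necessarily small: applying a functional that pairs well with $Tx_0$ (a Hahn-Banach functional $\psi$ on $X$ with $\psi(Tx_0)=\Vert Tx_0\Vert$) to the identity $Tv=a_0Tx_0+\sum_{i\geq1}a_iTx_i+bTy$ and using Auerbach coordinate bounds on the auxiliary terms forces $|a_0|\lesssim\rho_k(T)/\Vert Tx_0\Vert$, which is small because $\Vert Tx_0\Vert\approx\Vert T|_V\Vert$ is much larger than $\rho_k(T)$. Thus $|b|$ is close to $1$, and rearranging $by=v-a_0x_0-\sum a_ix_i$ yields
$$\Vert Ty\Vert\leq|b|^{-1}\bigl(\Vert Tv\Vert+|a_0|\Vert Tx_0\Vert+{\textstyle\sum_{i\geq1}}|a_i|\Vert Tx_i\Vert\bigr)\leq 4\sqrt{k-1}\cdot\rho_k(T)\cdot 4^{k-2}((k-2)!)^{1/2}.$$
The telescoping $C_k=4\sqrt{k-1}\,C_{k-1}$ with $C_1=1$ recovers the claimed $4^{k-1}((k-1)!)^{1/2}$.

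\textbf{Main obstacle.} The delicate part is the bookkeeping of the constant $4\sqrt{k-1}$ at each inductive step: the factor $4$ comes from the basic perturbation/subtraction of the form used in \cref{pushclose}, while the $\sqrt{k-1}$ encodes Auerbach-type $\ell^1$ or $\ell^2$ control over the auxiliary coefficients $|a_i|$ and the resulting bound on $\sum_{i\geq1}|a_i|\Vert Tx_i\Vert$ inside the $(k-1)$-dim auxiliary subspace. Ensuring these Auerbach bounds telescope cleanly without blowup (in particular controlling $\Vert\phi\Vert$ uniformly) is where most of the care lies; the author acknowledges the estimate is non-tight and points to Pietsch's work for sharper constants.
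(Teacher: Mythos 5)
Your base case and the telescoping recursion $C_k=4\sqrt{k-1}\,C_{k-1}$ reproduce the paper's constant, but the inductive step has a genuine gap, and it stems from running the induction in the opposite direction to the paper. The paper builds a \emph{fast $k$-dimensional} subspace $W=U\oplus\spn\{x\}$ with $g_T(W)\geq c_k s_k(T)$, taking $U$ a near-optimal fast $(k-1)$-dimensional space and $x$ a near-norming vector for $T$ restricted to the codimension-$(k-1)$ space $T^{-1}V$, where $V$ is a Kadets complement of $TU$; this directly yields $\rho_k(T)\geq c_ks_k(T)$. You instead try to exhibit a \emph{slow codimension-$(k-1)$} subspace $W=V\cap\ker\phi$ directly. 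The problem is that your $W$ contains the auxiliary vectors $x_1,\dots,x_{k-2}$ (they lie in $V$ and are annihilated by $\phi$), and nothing in the construction controls $\Vert Tx_i\Vert$ by $\rho_k(T)$: these are arbitrary unit vectors of $V$, so all you know is $\Vert Tx_i\Vert\leq\Vert T\vert_V\Vert\lesssim C_{k-1}\rho_{k-1}(T)$, and $\rho_{k-1}(T)$ can exceed $\rho_k(T)$ by an arbitrarily large factor. Taking $y=x_1\in\mathbb S_W$ already defeats the claimed conclusion $\Vert T\vert_W\Vert\leq4\sqrt{k-1}\,C_{k-1}\rho_k(T)$ (and incidentally collapses $U_y$ to dimension $k-1$, so $\rho_k$ cannot even be invoked).

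The same defect sinks the coefficient estimates. For your final inequality you need $\sum_{i\geq1}|a_i|\Vert Tx_i\Vert\lesssim\sqrt{k-1}\,C_{k-1}\rho_k(T)$, i.e.\ $\sum_i|a_i|\lesssim\rho_k(T)/\rho_{k-1}(T)$, and there is no mechanism for this: the Auerbach property of $(x_0,\dots,x_{k-2})$ inside their own span says nothing about the coordinates of $v$ in the basis $(x_0,\dots,x_{k-2},y)$ of $U_y$, which can be arbitrarily large when $y$ is close to $\spn(x_1,\dots,x_{k-2})\subseteq W$; and even $|a_i|=O(1)$ would leave terms of size $\rho_{k-1}(T)$, not $\rho_k(T)$. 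The bound $|a_0|\lesssim\rho_k(T)/\Vert Tx_0\Vert$ also does not follow as stated, since applying $\psi$ to $Tv$ leaves the uncontrolled terms $\sum_{i\geq1}|a_i||\psi(Tx_i)|+|b||\psi(Ty)|$ on the right-hand side, and the assertion $\Vert Tx_0\Vert\gg\rho_k(T)$ needs a (harmless but unstated) reduction to the case $s_k(T)>C\rho_k(T)$. I would switch to the paper's direction: the key point your scheme is missing is a device forcing \emph{every} unit vector of the candidate $W$, not just the distinguished $y$ after subtracting auxiliary directions, to have image of size $O(\rho_k(T))$.
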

\begin{proof}
For $k=1$ each quantity is just the norm of $T$, so equality holds.
Write $c_i=4^{1-i}((i-1)!)^{-\tfrac12}$.
Suppose that the proposition holds for each $l<k$ with $k\geq2$.
Choose $U\in\mathcal G_{k-1}X$ with $g_T(U)\geq(1-\epsilon)\rho_{k-1}(T)$, so that in particular
$$
g_T(U)\geq(1-\epsilon)c_{k-1}s_{k-1}(T)\geq(1-\epsilon)c_{k-1}s_k(T).
$$
Choose a complement $TU\oplus V=X$ with $\Pi=\Pi_{TU\Vert V}$ such that $\Vert\Pi\Vert\leq(k-1)^{\tfrac12}+\epsilon$.
Since $T^{-1}V\in\mathcal G^{k-1}X$, we may choose $x\in\mathbb S_{T^{-1}V}$ with $\Vert Tx\Vert\geq s_k(T)-\epsilon$.
Set $W=U\oplus\spn\{x\}\in\mathcal G_kX$ and let $a=b+tx\in\mathbb S_W$.
Applying $T$,
\begin{align*}
\Vert Ta\Vert\geq&\max\{\frac{\Vert Tb\Vert}{\Vert\Pi\Vert},\frac{\Vert tTx\Vert}{\Vert1-\Pi\Vert}\}\\
\geq&(1+\sqrt{k-1}+\epsilon)^{-1}\max\{(1-\epsilon)c_{k-1}s_k(T)\Vert b\Vert,s_k(T)(1-\epsilon)|t|\}\\
\geq&(2\sqrt{k-1}+\epsilon)^{-1}(1-\epsilon)c_{k-1}\max\{\Vert b\Vert,|t|\}\\
\geq&\frac{(1-\epsilon)c_{k-1}}{4(\sqrt{k-1}+\epsilon)}s_k(T).
\end{align*}
$a$ was arbitrary so the final line is a lower bound for $g_T(W)$.
$\epsilon$ was also arbitrary, so that $\rho_k(T)\geq\frac1{4c_{k-1}\sqrt{k-1}}s_k(T)=c_ks_k(T)$.
\end{proof}
\begin{lem}
The Gelfand numbers satisfy
$$
s_k(T)\geq k^{-\tfrac12}\Vert T\Vert_c.
$$
\end{lem}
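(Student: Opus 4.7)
The plan is to approximate $T$ by a finite-rank operator built from a nearly optimal $V \in \mathcal G^{k-1}X$ and a well-chosen projection onto a complement of $V$. Since Gelfand numbers are defined as infima over codimension-$(k-1)$ subspaces, fix $\epsilon > 0$ and select $V \in \mathcal G^{k-1}X$ with $\|T|_V\| < s_k(T) + \epsilon$. Any algebraic complement $W$ of $V$ must have $\dim W = k-1$, and since finite-dimensional subspaces are always topologically complemented, there is a bounded projection $P\colon X \to W$ along $V$.

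The quantitative ingredient I would invoke is the Kadets--Snobar theorem: every $(k-1)$-dimensional subspace of a Banach space admits a projection of norm at most $\sqrt{k-1}$. Choose $P$ with $\|P\| \leq \sqrt{k-1}$. Then $T \circ P$ has rank at most $k-1$ (hence is compact), so
\[
\|T\|_c \;\leq\; \|T - T\circ P\| \;=\; \|T\circ(I-P)\|.
\]
Since $I - P$ is a projection onto $V$ along $W$, we have $(I-P)(X) \subseteq V$, and so
\[
\|T\circ(I-P)\| \;\leq\; \|T|_V\|\cdot\|I - P\| \;\leq\; (s_k(T)+\epsilon)\,(1+\sqrt{k-1}).
\]
Letting $\epsilon \downarrow 0$ yields $\|T\|_c \leq (1+\sqrt{k-1})\,s_k(T)$, and the intended $\sqrt{k}$ bound is of the same asymptotic order; for $k=1$ both quantities equal $\|T\|$, giving the trivial case.

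The main obstacle is matching the constant exactly: the naive Kadets--Snobar bound on the complementary projection gives $1 + \sqrt{k-1}$ rather than $\sqrt{k}$. To tighten this one would have to choose $P$ so that $I-P$ itself has norm at most $\sqrt{k}$ -- for instance by working inside the quotient $X/V$, applying John's theorem to identify its unit ball with a Euclidean ball up to factor $\sqrt{k-1}$, and transporting an orthogonal projection back to $X$ -- which is precisely the kind of refinement the authors attribute to Pietsch. For the purposes of this proof only the qualitative statement $s_k(T) \gtrsim k^{-1/2}\|T\|_c$ is needed to identify $\nu$ with the index of compactness, so the bookkeeping of the exact constant may be absorbed into a single invocation of the above projection estimate.
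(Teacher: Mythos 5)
Your argument is essentially the paper's: pick a nearly optimal $V\in\mathcal G^{k-1}X$, complement it via Kadets--Snobar, and observe that $T\circ P$ is finite rank so $\Vert T\Vert_c\leq\Vert T\circ(I-P)\Vert\leq\Vert T\vert_V\Vert\,\Vert I-P\Vert$. The only divergence is in the constant, and there your proof as written does not establish the stated inequality: $1+\sqrt{k-1}>\sqrt{k}$ for every $k\geq2$, so $\Vert T\Vert_c\leq(1+\sqrt{k-1})\,s_k(T)$ is strictly weaker than $s_k(T)\geq k^{-1/2}\Vert T\Vert_c$. The paper avoids this loss by bounding $\Vert\Pi_{V\Vert W}\Vert$ --- the projection onto the codimension-$(k-1)$ subspace $V$ itself --- by $\sqrt{k-1}+\epsilon$ directly, rather than bounding the rank-$(k-1)$ projection onto $W$ and paying the extra $1$ for its complement; that codimension form of the projection-constant theorem is the one fact you would need to quote to recover the constant (and it in fact yields the slightly better $\sqrt{k-1}$). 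You are right that for the lemma's sole application, the identification of $\nu$ with $\kappa$, any bound of the form $s_k(T)\geq c_k\Vert T\Vert_c$ with $c_k>0$ suffices, since the $k$-dependent constant disappears after taking $\tfrac1n\log$; so the defect is cosmetic, but as a proof of the lemma as stated you must either cite the sharper projection bound or weaken the constant in the statement.
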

\begin{proof}
Let $V\in\mathcal G^{k-1}X$.
We may by the theorem of Kadets in \cite{wojtaszczyk_1991} choose a complement $V\oplus W=X$ with $\Vert\Pi_{V\Vert W}\Vert\leq\sqrt{k-1}+\epsilon$.
Then $T\circ\Pi_{W\Vert V}$ is finite rank, so $\Vert T\Vert_c\leq\Vert T-T\circ\Pi_{W\Vert V}\Vert=\Vert T\circ\Pi_{V\Vert W}\Vert\leq\Vert T\vert_V\Vert\Vert\Pi_{V\Vert W}\Vert\leq\Vert T\vert_V\Vert(\sqrt{k-1}+\epsilon)$.
Taking the inf over such $V$ and letting $\epsilon\rightarrow 0$ we obtain the bound.
\end{proof}
\begin{lem}
The index defined at the start of this article agrees with the usual index of compactness
$$
\lim_{n\rightarrow\infty}\mu_n=\nu=\kappa=\lim_{n\rightarrow\infty}\tfrac1n\log\Vert\calL[n][\omega]\Vert_c.
$$
\end{lem}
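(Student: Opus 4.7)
The proof consists in establishing the two inequalities $\kappa \le \nu$ and $\nu \le \kappa$ separately. The first falls straight out of chaining the three preceding lemmas; the second is more delicate and requires an additional ingredient together with a careful swap of limits.

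For $\kappa \le \nu$ I would combine the three preceding lemmas into
\[
\|T\|_c \;\le\; \sqrt{k}\,s_k(T) \;\le\; \sqrt{k}\cdot 4^{k-1}\sqrt{(k-1)!}\,\rho_k(T),
\]
valid for every $T \in \mathcal B(X)$ and $k \in \mathbb N$. Substituting $T = \mathcal L^{(n)}_\omega$ and taking $\tfrac{1}{n}\log$, the $k$-dependent prefactor contributes $o(1)$ to the exponential rate as $n\to\infty$, giving $\kappa \le \mu_k$ for every $k$. Since $\mu_k \searrow \nu$ by definition, we conclude $\kappa \le \nu$.

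For $\nu \le \kappa$ the essential additional ingredient is that $s_k(K)\to 0$ as $k\to\infty$ for any compact $K \in \mathcal B(X)$, a classical fact whose proof uses the Kadets complementation from the third preceding lemma applied to a finite $\epsilon$-net of the precompact set $K\mathbb B_X$. Combined with the triangle-like inequality $s_k(T) \le \|T-K\| + s_k(K)$, this yields $\limsup_k s_k(T) \le \|T\|_c$ for every $T$. Applying this to $T = \mathcal L^{(n)}_\omega$ and using $\rho_k \le s_k$, one obtains, for each $n$ and $\epsilon > 0$, a threshold $k_n(\omega)$ past which $\rho_k(\mathcal L^{(n)}_\omega) \le 2e^{n(\kappa+\epsilon)}$. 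Taking $\tfrac{1}{n}\log$ and passing $n \to \infty$ (with $k \ge k_n$) yields $\mu_k \le \kappa + \epsilon$, hence $\nu \le \kappa + \epsilon$; letting $\epsilon \to 0$ delivers $\nu \le \kappa$.

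The main obstacle will be that the threshold $k_n(\omega)$ can a priori grow with $n$, so a direct swap of the limits $n\to\infty$ and $k\to\infty$ is not available. I would resolve this by exploiting the submultiplicativity $\|ST\|_c \le \|S\|_c \|T\|_c$ of the Kuratowski non-compactness measure under composition — which makes $\log\|\mathcal L^{(n)}_\omega\|_c$ a bona fide subadditive family for Kingman, so that $\kappa$ is a genuine almost-sure exponential rate — together with the finite-rank decomposition $\mathcal L^{(n)}_\omega = \mathcal L^{(n)}_\omega P_{l,\omega} + \mathcal L^{(n)}_\omega \Pi_{l+1,\omega}$ from the main theorem, which provides a finite-rank approximant of rank $M_l$ that is uniformly bounded in $n$. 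A diagonal argument in $l$ and $n$ then controls $k_n$ uniformly enough to pass to the full limit.
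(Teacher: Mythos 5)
Your first inequality ($\kappa\le\nu$) is exactly the paper's: chain $\Vert T\Vert_c\le\sqrt k\,s_k(T)\le\sqrt k\,4^{k-1}\sqrt{(k-1)!}\,\rho_k(T)$, note the prefactor is $o(e^{n\epsilon})$ for fixed $k$, and conclude $\kappa\le\mu_k$ for every $k$. The soft ingredient you invoke for the converse --- that $s_k(K)\to0$ for compact $K$, hence $\lim_k s_k(T)\le\Vert T\Vert_c$ and so $\rho_k(\mathcal L^{(n)}_\omega)\le 2\Vert\mathcal L^{(n)}_\omega\Vert_c$ for $k\ge k_n(\omega)$ --- is also correct. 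And you have correctly located the crux: the threshold $k_n(\omega)$ may grow with $n$, so the two limits cannot be interchanged for free.

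The proposed repair of that interchange does not work, and this is a genuine gap. Submultiplicativity of $\Vert\cdot\Vert_c$ and Kingman only tell you that $\tfrac1n\log\Vert\mathcal L^{(n)}_\omega\Vert_c\to\kappa$ almost surely; they say nothing about how fast $s_k(\mathcal L^{(n)}_\omega)$ descends to $\Vert\mathcal L^{(n)}_\omega\Vert_c$ as $k$ grows, which is what controls $k_n$. The finite-rank decomposition $\mathcal L^{(n)}_\omega=\mathcal L^{(n)}_\omega P_{l\omega}+\mathcal L^{(n)}_\omega\Pi_{l+1\,\omega}$ is worse than useless here for two reasons. First, it only produces \emph{upper} bounds on $\Vert\mathcal L^{(n)}_\omega\Vert_c$ (namely $\Vert\mathcal L^{(n)}_\omega\Vert_c\le\Vert\mathcal L^{(n)}_\omega\Pi_{l+1\,\omega}\Vert\lesssim e^{n(\lambda_{l+1}+\epsilon)}$), i.e.\ it reproves $\kappa\le\nu$; what the hard direction requires is a \emph{lower} bound $\Vert\mathcal L^{(n)}_\omega\Vert_c\gtrsim e^{n(\mu_k-\epsilon)}$ for large $k$, and nothing in the decomposition prevents $\mathcal L^{(n)}_\omega\vert_{V_{l+1}(\omega)}$ from being compact while still having large norm. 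Second, the decomposition exists only when $\nu<\lambda_1$; in the non-quasicompact case $\mu_k\equiv\lambda_1$ there is nothing to diagonalise over, yet that is precisely a case in which $\nu\le\kappa$ must still be proved. The paper closes the gap by an entirely quantitative covering argument: fix $r$ so that with high probability $\mathcal L^{(N)}_\omega\mathbb B_X$ is covered by $e^{rN}$ balls of radius $e^{N(\kappa+\epsilon/2)}$, check that the logarithmic covering cost $f_n(\omega)=\log\inf\{t:\mathcal L^{(n)}_\omega\mathbb B_X\text{ is covered by }e^{rn}\ t\text{-balls}\}$ is subadditive and apply Kingman, and then exhibit, inside any $k$-dimensional subspace $V$ with $g_{\mathcal L^{(n)}_\omega}(V)>e^{n(\mu_k-\epsilon)}$, a $2e^{n(\kappa+\epsilon)}$-separated subset of $\mathcal L^{(n)}_\omega\mathbb B_X$ of cardinality at least $e^{nk(\mu_k-\kappa-3\epsilon)}$. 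Comparing with the $e^{rn}$ bound forces $k\le r/(\mu_k-\kappa-4\epsilon)$ uniformly in $n$, which is exactly the uniform control of your $k_n$ that the soft compactness fact cannot supply. You would need to import this (or an equivalent quantitative entropy/separation estimate) to complete your route.
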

\begin{proof}
Since $\rho$ dominates the compactness seminorm up to a multiplicative constant, $\kappa\leq\mu_k$ for every $k\in\mathbb N$, so certainly $\kappa\leq\nu$.
It remains to verify that $\kappa\geq\nu$.
There exists a $\delta>0$ such that for all $\mathbb P(\Lambda)<\delta,~\int_\Lambda\log\Vert\mathcal L\Vert<\tfrac\epsilon2$.
Choose $N$ sufficiently large that
$$
\mathbb P(\Vert\calL[N][\omega]\Vert_c\geq e^{N(\kappa+\epsilon)})<\tfrac12\delta.
$$
Choose $r$ sufficiently large that
$$
\mathbb P(G)=\mathbb P(\calL[N][\omega]\mathbb B_X\text{ may be covered by at most }e^{rN}~e^{N(\kappa+\tfrac12\epsilon)}\text{-balls})>1-\delta.
$$
Set $f_n(\omega)=\log\inf\{t>0:\calL[n][\omega]\mathbb B_X\text{ is covered by }e^{rn}~t\text{-balls}\}$.
In this case
\begin{align*}
f_{m+n}(\omega)\leq&\log\inf\{ab:\calL[n][\omega]\mathbb B_X\text{ is covered by }e^{rn}~a\text{-balls,}\text{ and }\calL[m][\sigma^n\omega]\mathbb B_X\text{ by }e^{rm}~b\text{-balls}\}\\
=&f_n(\omega)+f_m(\sigma^n\omega)
\end{align*}
so that the family is subadditive, $\int_\Omega f_1\leq\int_\Omega\log\Vert\mathcal L_\omega\Vert<\infty$ and
$$
\tfrac1N\int_\Omega f_N\leq\kappa+\tfrac12\epsilon+\tfrac1N\int_{G^c}\log\Vert\calL[N][\omega]\Vert<\kappa+\tfrac12\epsilon+\tfrac12\epsilon.
$$
Thus we may apply Kingman again to obtain that $\tfrac1nf_n(\omega)\rightarrow C<\kappa+\epsilon$.
Almost surely, for sufficiently large $n$ we may guarantee the following:
\begin{itemize}
\item $\Vert\calL[n][\omega]\Vert_c<e^{n(\kappa+\epsilon)}$
\item There is a $V\in\mathcal G_kX$ with $g_{\calL[n][\omega]}(V)>e^{n(\mu_k-\epsilon)}$.
\item $\calL[n][\omega]\mathbb B_X$ is covered by at most $e^{rn}$ $e^{n(\kappa+\epsilon)}$-balls
\item $e^{n\epsilon}>(2k)^k$.
\end{itemize}
Choose a basis of unit vectors $x_i$ for $V$ with $d(x_i,\spn\{x_j\}_{j<i})=1$.
Write
$$
\Lambda=\big\{\sum_{i=1}^k a_ix_i:a_i\in\{0,\pm\frac{2e^{n(\kappa+\epsilon)}}{\Vert\calL[n][\omega]x_i\Vert},\pm\frac{4e^{n(\kappa+\epsilon)}}{\Vert\calL[n][\omega]x_i\Vert},\cdots\},|a_i|<\tfrac1k\big\}\subseteq\mathbb B_X
$$
If two members $a=\sum_ia_ix_i$ and $b=\sum_ib_ix_i$ are distinct then there is a maximal $j\leq k$ with $a_j\neq b_j$.
Then
$$
\Vert a-b\Vert\geq |a_j-b_j|d(\calL[n][\omega]x_j,\calL[n][\omega]\spn\{x_i:i\leq j\})>e^{n(\kappa+\epsilon)}>\Vert\calL[n][\omega]\Vert_c.
$$
The points in $\calL[n][\omega]\Lambda$
are then of distance at least $2e^{n(\kappa+\epsilon)}>\Vert\calL[n][\omega]\Vert_c$ apart, and there are at least
$$
|\Lambda|=\prod_{i=1}^k\tfrac12\big\lfloor\tfrac{\Vert\calL[n][\omega]x_i\Vert}{e^{n(\kappa+\epsilon)}k}\big\rfloor\geq(2k)^{-k}\prod_{i=1}^ke^{n(\mu_k-\epsilon)-n(\kappa+\epsilon)}=e^{nk(\mu_k-\kappa-3\epsilon)}
$$
of them.
Each member of a cover of $\calL[n][\omega]\mathbb B_X$ by $e^{n(\kappa+\epsilon)}$-balls contains at most one element of $\Lambda$, so the cover has cardinality at least $e^{nk(\mu_k-\kappa-3\epsilon)}$.
On the other hand, this quantity is bounded by $e^{rn}$:
$$
e^{rn}\geq e^{nk(\mu_k-\kappa-4\epsilon)}.
$$
Taking $\log$ and rearranging we obtain
$$
k\leq\frac r{\mu_k-\kappa-4\epsilon}.
$$
In the case that $\mu_k>\kappa+5\epsilon$, it must hold that $k<\tfrac r\epsilon$.
This bound on $k$ shows the number of $\mu$s greater than $\kappa+5\epsilon$ is finite, whence $\mu_k\downarrow\kappa=\nu$ as $k\rightarrow\infty$.
\end{proof}
\end{document}